\documentclass[10pt]{amsart}

\usepackage[T1]{fontenc}

\usepackage{float}
\usepackage[margin=1.4in]{geometry} 

\usepackage[foot]{amsaddr}

\usepackage{mathtools, amsmath, amssymb, amsthm, stmaryrd}

\usepackage{graphicx}

\usepackage{tikz-cd, tikz}
\usepackage[all]{xy}

\usepackage{amssymb, mathrsfs}

\usepackage{todonotes, verbatim}

\usepackage{enumerate}

\usepackage{booktabs}

\usepackage{aliascnt}

\usepackage[colorlinks, citecolor=forestgreen, linkcolor=frenchblue, pagebackref=true]{hyperref}
\usepackage[nameinlink, capitalize,noabbrev]{cleveref} 
\renewcommand*{\backrefalt}[4]{%
\ifcase #1 %
No citations.%
\or
(Cited on page #2).%
\else
(Cited on pages #2).%
\fi
}

\usepackage{xcolor}
\definecolor{forestgreen}{rgb}{0.13, 0.55, 0.13}
\definecolor{frenchblue}{rgb}{0.0, 0.45, 0.73}

\usepackage{csquotes}

\usepackage[normalem]{ulem}

\usepackage{bbm}

\usepackage{halloweenmath}

\makeatletter
\DeclareDocumentCommand{\newmathcommand}{ m O{0} m }{%
  \ifcsname\expandafter\@gobble\string#1\space\endcsname
    \expandafter\expandafter\expandafter\let\expandafter\csname old\string#1\expandafter\endcsname\expandafter=\csname\expandafter\@gobble\string#1\space\endcsname
  \else
    \expandafter\let\csname old\string#1\endcsname=#1
  \fi
  \expandafter\newcommand\csname new\string#1\endcsname[#2]{#3}
  \DeclareRobustCommand#1{%
    \ifmmode
      \expandafter\let\expandafter\next\csname new\string#1\endcsname
    \else
      \expandafter\let\expandafter\next\csname old\string#1\endcsname
    \fi
    \next
  }%
}
\makeatother
\newmathcommand{\r}{\mathring}

\makeatletter
\DeclareRobustCommand\bigop[2][1]{%
  \mathop{\vphantom{\sum}\mathpalette\bigop@{{#1}{#2}}}\slimits@
}
\newcommand{\bigop@}[2]{\bigop@@#1#2}
\newcommand{\bigop@@}[3]{%
  \vcenter{%
    \sbox\z@{$#1\sum$}%
    \hbox{\resizebox{\ifx#1\displaystyle#2\fi\dimexpr\ht\z@+\dp\z@}{!}{$\m@th#3$}}%
  }%
}
\makeatother
\let\originalbeef\bigstar
\renewcommand{\bigstar}{\bigop[0.8]{\originalbeef}}

\let\originalleft\left
\let\originalright\right
\renewcommand{\left}{\mathopen{}\mathclose\bgroup\originalleft}
\renewcommand{\right}{\aftergroup\egroup\originalright}

\AddToHook{cmd/@setemails/before}{\raggedright}

\AtBeginDocument{
\let\originalref\ref
\let\originaleqref\eqref
\renewcommand{\ref}[1]{\autoref{#1}}
\renewcommand{\eqref}[1]{{\renewcommand{\ref}{\originalref}\originaleqref{#1}}}
}

\renewcommand{\phi}{\varphi}
\renewcommand{\epsilon}{\varepsilon}

\allowdisplaybreaks

\addtocontents{toc}{\protect\setcounter{tocdepth}{1}}

\DeclareMathOperator{\Sub}{Sub}

\DeclareMathOperator{\Marks}{Marks}

\DeclareMathOperator{\Stab}{Stab}

\DeclareMathOperator{\Tr}{Tr}

\DeclareMathOperator{\im}{im}

\DeclareMathOperator{\Mack}{Mack}

\DeclareMathOperator{\Set}{Set}
\DeclareMathOperator{\Twin}{Twin}
\DeclareMathOperator{\Fin}
{\textup{\textsf{Fin}}}
\DeclareMathOperator{\Alg}{Alg}
\DeclareMathOperator{\Span}{\textup{\textsf{Span}}}
\DeclareMathOperator{\Bispan}{\textup{\textsf{Bispan}}}

\DeclareMathOperator{\Barks}{Barks}

\newcommand{\id}{\mathrm{id}}

\newcommand{\N}{\mathbb{N}}
\newcommand{\BA}{\mathbb{A}}
\newcommand{\Z}{\mathbb{Z}}
\newcommand{\Q}{\mathbb{Q}}

\newmathcommand{\P}{\mathcal{P}}

\newmathcommand{\1}{\mathbbm{1}}

\newcommand{\mymacro}{\Omega}

\makeatletter
\providecommand{\leftsquigarrow}{%
  \mathrel{\mathpalette\reflect@squig\relax}%
}
\newcommand{\reflect@squig}[2]{%
  \reflectbox{$\m@th#1\rightsquigarrow$}%
}
\makeatother

\numberwithin{equation}{section}

\crefname{equation}{}{}

\newaliascnt{theorem}{equation}
\newtheorem{theorem}[theorem]{Theorem}
\aliascntresetthe{theorem}

\newaliascnt{proposition}{theorem}
\newtheorem{proposition}[proposition]{Proposition}
\aliascntresetthe{proposition}

\newaliascnt{corollary}{theorem}
\newtheorem{corollary}[corollary]{Corollary}
\aliascntresetthe{corollary}

\newaliascnt{lemma}{theorem}
\newtheorem{lemma}[lemma]{Lemma}
\aliascntresetthe{lemma}

\theoremstyle{definition}

\newaliascnt{definition}{theorem}
\newtheorem{definition}[definition]{Definition}
\aliascntresetthe{definition}

\newaliascnt{remark}{theorem}
\newtheorem{remark}[remark]{Remark}
\aliascntresetthe{remark}

\newaliascnt{example}{theorem}
\newtheorem{example}[example]{Example}
\aliascntresetthe{example}

\newaliascnt{notation}{theorem}
\newtheorem{notation}[notation]{Notation}
\aliascntresetthe{notation}

\makeatletter
\def\retheorem@title{???}
\newtheoremstyle{retheorem}%
    {.5\baselineskip plus .2\baselineskip minus .2\baselineskip}
    {.5\baselineskip plus .2\baselineskip minus .2\baselineskip}
    {\normalfont}%
    {0pt}%
    {\bfseries}%
    {.}%
    {5pt plus 1pt minus 1pt}%
    {\retheorem@title}
\theoremstyle{retheorem}
\newtheorem*{@retheorem}{}

\makeatother

\title{Composition of Bispans of $G$-Sets and Plethysm}

\author{Nathan Cornelius}
\email{nathan.cornelius@uky.edu}
\author{Evan Franchere}
\email{evan.franchere@uky.edu}
\author{Usman Hafeez}
\email{usman.hafeez@uky.edu}
\author{Jesse Keyes}
\email{jdke228@uky.edu}
\author{David Mehrle}
\email{davidm@uky.edu}
\author{Lakshay Modi}
\email{lakshay.modi@uky.edu}
\author{Nathaniel Stapleton}
\email{nat.j.stapleton@uky.edu}
\address{Department of Mathematics, University of Kentucky, Lexington, KY, U.S.A.}
\date{\today}

\begin{document}

\begin{abstract}
Let $P(G)$ be the Grothendieck ring of the semiring of endomorphisms of the point in the $1$-category of bispans of finite $G$-sets for a finite group $G$. This is the bispan analogue of the Burnside ring of $G$. The ring $P(G)$ admits a third operation from composition of bispans. We produce a character map for $P(G)$ landing in a plethory built out of polynomial rings and the poset of conjugacy classes of subgroups of $G$. We prove that the character map sends composition of bispans to the plethysm operation -- which is a generalization of composition of polynomials. 
\end{abstract}

\maketitle

\vspace{-1em}
\tableofcontents

\section{Introduction}
Bispans of finite $G$-sets have played an increasingly important role in $G$-equivariant stable homotopy theory \cite{BH2015,BH2018,CHLL2024,CHLL2025,EH2023}. In $G$-equivariant algebra, the objects that play the role of commutative rings are $G$-Tambara functors: product preserving functors from the category of bispans to the category of sets \cite{Tambara1993}. The Grothendieck ring of the endomorphisms of $G/G$ in the category of bispans, which we denote $P(G)$, is the bispan analogue of the Burnside ring of $G$.  In $G$-equivariant stable homotopy theory, $P(G)$ arises as $\pi_{0}^{G}$ of the free $N_{\infty}$-algebra on a point, see \cref{HmtpyTheory}. In $G$-equivariant algebra, it arises as the top level of the free Tambara functor over the Burnside Tambara functor on one generator with trivial action. An important feature of this ring is that it admits an extra operation coming from the composition of bispans. Algebraic structures combining ring structures with composition have previously been studied under the name of plethories \cite{BorgerWieland2005}. 
One might hope that the composition on $P(G)$ has plethystic features. While $P(G)$ is not necessarily a plethory for all finite groups, the goal of this paper is to show that 
its composition product can be understood via a character map that lands in a plethory and sends the composition operation on $P(G)$ to the plethysm operation of the target. 

Let $\P^G$ be the $1$-category of bispans of finite $G$-sets. This category is built similarly to the $1$-category of spans of finite $G$-sets. For finite $G$-sets $X$ and $Y$, $\P^G(X,Y)$ is the set of isomorphism classes of bispans from $X$ to $Y$. The set $\P^G(X,Y)$ admits the structure of a semiring. Let $P(G)$ be the Grothendieck ring of the semiring $\P^G(G/G,G/G)$. We call this the Burnside--Tambara ring. This ring admits an extra operation coming from composition of bispans. When $G = e$, the trivial group, $P(e)$ is isomorphic to $\Z[x]$ and the composition of bispans corresponds to composition of polynomials.

We denote an isomorphism class of bispans from $G/G$ to $G/G$ as
\[
[X \xrightarrow{\varphi} Y],
\]
where $X$ and $Y$ are finite $G$-sets, abbreviating the isomorphism class of the bispan $[G/G \leftarrow X \xrightarrow{\varphi} Y \rightarrow G/G]$ by omitting the source and target. Addition and multiplication of isomorphism classes of bispans are given by
\begin{align*}
	[X \xrightarrow{\varphi} Y] + [X'\xrightarrow{\varphi'}Y'] 
		&= [X \amalg X' \xrightarrow{\varphi \amalg \varphi'} Y \amalg Y'] \text{ and } \\
	[X \xrightarrow{\varphi} Y] \cdot [X' \xrightarrow{\varphi'} Y'] 
		&= [(X \times Y') \amalg (Y \times X') \xrightarrow{(\varphi \times \id_{Y'}) + (\id_Y \times \varphi')} Y \times Y'], 
\end{align*}
with $0 = [\emptyset \rightarrow \emptyset]$ and $1 = [\emptyset \rightarrow \ast]$. The composition operation on $P(G)$ is quite a bit harder to describe as it involves a ``dependent product'' and ``exponential diagram'' (see \cref{CompDiagram}).

The Burnside ring of $G$, $A(G)$, sits inside of $P(G)$ as bispans of the form $[\emptyset \to Y]$, and we view $P(G)$ as an $A(G)$-algebra. The study of Burnside rings benefits from a well-behaved character theory. Let $\Marks(G)$ denote the ring of integer valued functions on the set of conjugacy classes of subgroups of $G$. 
The character map
\[
\chi \colon A(G) \to \Marks(G)
\]
sends a finite $G$-set $X$ to the function 
\[
    [H] \mapsto |X^H|
\]
on conjugacy classes of subgroups of $G$. 
Dress proved that the character map is injective and a rational isomorphism \cite{Dress}. There are extensions of this character theory for $A(G)$ to $P(G)$ that have distinct benefits. We focus on a particular extension of this character theory in which the target exhibits stellar algebraic properties.

Let $A^+(G)$ be the effective Burnside semiring -- the semiring of isomorphism classes of finite $G$-sets. For $H \subseteq G$, let $I_H \subseteq A^+(G)$ be the image of the transfer map $\Tr_{H}^{G} \colon A^+(H) \to A^+(G)$. This is the summand of $A^+(G)$ generated by basis elements of the form $[G/K]$ for $K \subseteq H$. 

Consider the commutative monoid ring $\Z[I_H]$; this is a polynomial ring with generators of the form $[G/K]$ for $K \subseteq H$. Given a conjugacy class $[H]$ of $G$, we produce a ring map
\[
\chi_{[H]} \colon P(G) \to \Z[I_H]
\]
by sending a bispan $[X \xrightarrow{\varphi} Y]$  to 
\[
\sum_{y \in Y^H} \left[\Tr_{H}^{G} (\varphi^{-1}(y))\right],
\]
where the sum takes place in $\Z[I_H]$. These maps $\chi_{[H]}$ assemble to give a character map
\[
\chi \colon P(G) \to \Barks(G),
\]
where $\Barks(G)$ is the product of the commutative monoid rings $\Z[I_H]$ over conjugacy classes of subgroups of $G$: 
\[
	\Barks(G) = \prod_{[H]} \Z[I_H].
\]

Closely related to work of Thevenaz in \cite{Thevenaz1988}, we prove:
\begin{proposition}
The map $\chi \colon P(G) \to \Barks(G)$ is a ring homomorphism. If $G$ is a Dedekind group, then it is injective and a rational isomorphism.
\end{proposition}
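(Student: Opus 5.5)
The plan is to verify the ring homomorphism property componentwise, and then prove injectivity and rational surjectivity for Dedekind $G$ by a triangularity argument with respect to a basis of $P(G)$.

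\textbf{Ring homomorphism.} First I check that $\chi_{[H]}$ is well defined. An isomorphism of bispans restricts to a bijection on $H$-fixed points of $Y$ and to $H$-isomorphisms of the fibers, so $\chi_{[H]}$ is constant on isomorphism classes. Replacing $H$ by a conjugate $gHg^{-1}$ changes nothing, since $y\mapsto gy$ identifies the fixed points and $\Tr$ is invariant under conjugation.

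Additivity on the semiring $\P^G(G/G,G/G)$ is immediate from $(Y\amalg Y')^H=Y^H\amalg Y'^H$.

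For multiplicativity, use $(Y\times Y')^H=Y^H\times Y'^H$. The fiber of the product bispan over $(y,y')$ is $\varphi^{-1}(y)\times\{y'\}\amalg\{y\}\times\varphi'^{-1}(y')$, which is $H$-isomorphic to $\varphi^{-1}(y)\amalg\varphi'^{-1}(y')$. Since $\Tr_H^G$ is additive, the corresponding monomial in $\Z[I_H]$ is the product $[\Tr_H^G\varphi^{-1}(y)]\cdot[\Tr_H^G\varphi'^{-1}(y')]$. Summing over $Y^H\times Y'^H$ then factors as the product of the two sums.

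The unit $[\emptyset\to\ast]$ goes to the monomial $[0]=1$, and $0$ goes to $0$. Each $\chi_{[H]}$ is therefore a semiring map, and it extends uniquely to the Grothendieck ring $P(G)$.

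\textbf{A basis of $P(G)$.} Every bispan $[X\to Y]$ splits as a sum over the orbits of $Y$. A bispan $[X\to G/K]$ is the same data as the $K$-set $A=\varphi^{-1}(eK)$, with $X\cong G\times_K A$. Two such bispans are isomorphic exactly when the pairs $(K,A)$ are related by $G$-conjugation. Hence $P(G)$ is free abelian on classes of pairs $(K,A)$, with $[K]$ a conjugacy class and $A$ a finite $K$-set up to $N_G(K)$-twisting.

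Now let $G$ be Dedekind, so every subgroup is normal. Then $(G/K)^H$ is all of $G/K$ if $H\subseteq K$ and is empty otherwise. The fiber over $gK$ is $A$ twisted by $g$, and since $K$ is normal we have $\Tr_K^G(c_g^*A)\cong\Tr_K^G(A)$ as $G$-sets. Consequently
\[
\chi_{[H]}(K,A)=0 \ \text{ unless } H\subseteq K,\qquad \chi_{[K]}(K,A)=|G:K|\,\bigl[\Tr_K^G A\bigr].
\]

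\textbf{Triangularity and conclusion.} Next I use that $\Tr_K^G\colon A^+(K)\to I_K$ is a bijection for normal $K$. It sends $K/L\mapsto G/L$, and for normal subgroups $G/L\cong G/L'$ forces $L=L'$. In particular, classes of $K$-sets modulo $G$-twisting correspond exactly to elements of $I_K$, i.e.\ to the monomial basis of $\Z[I_K]$. So both source and target have bases indexed by pairs $(K,m)$ with $m\in I_K$.

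Order these basis elements so that larger $K$ come first. In this order the matrix of $\chi$ is block triangular, and the diagonal blocks are $|G:K|$ times a permutation. This gives injectivity, and it makes $\chi\otimes\Q$ surjective: inductively on $K$ from the top, subtract off lower-order contributions and divide by $|G:K|$. The product $\prod_{[H]}$ is finite, so the target is a direct sum and the induction is legitimate even though each $\Z[I_H]$ has infinite rank.

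I expect the main obstacle to be the precise bookkeeping of the basis of $P(G)$, i.e.\ checking that iso classes of connected bispans correspond exactly to $(K,A)$ up to conjugation. The remaining work is matching these up with monomials in $\Z[I_K]$, which is where normality of all subgroups is used in two places.
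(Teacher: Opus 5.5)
Your proof is correct and takes essentially the same route as the paper. The ring-homomorphism check is the same fixed-point and fiber computation. For Dedekind $G$ the paper likewise uses the basis of bispans with transitive codomain $G/L$, the vanishing of $\chi_{[M]}$ for $M\not\subseteq L$, and the diagonal value $|W_G(L)|\prod x_{K_i}=|G:L|\prod x_{K_i}$ to get a triangular matrix with nonzero diagonal. The only real difference is that the paper restricts to each finite-rank graded piece $P(G)_m\to\Barks(G)_m$, whereas you handle the infinite rank by induction over the finite subgroup poset.

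One small wording fix: $\Tr_K^G\colon A^+(K)\to I_K$ is not a bijection for merely normal $K$ (it fails for $V_4\lhd S_4$). Your justification, however, only uses that the subgroups of $K$ are normal in $G$, which the Dedekind hypothesis supplies.
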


So far we have said little about composition, but that is the main point of this story. For a commutative ring $R$, $R$-plethories are complicated algebraic gadgets: roughly speaking, they are $R$-algebras that are equipped with a coaddition map, comultiplication map, additive and multiplicative counit maps and a coscalar map, all in the category of $R$-algebras -- as well as a composition product, called plethysm, mimicking ordinary composition of single variable polynomials.  The polynomial ring $\Z[x]$ admits the structure of a $\Z$-plethory. In this case, the structure maps described above come from the fact that it ``corepresents'' the identity functor on the category of commutative rings and the composition product is just ordinary composition of polynomials. Because of all of the structure that they possess, plethories tend to be important objects when they occur.

The ring $\Barks(G)$ is determined in a simple way by the poset of conjugacy classes of subgroups of $G$. For any finite poset $P$, let $\Marks(P) = \prod_{p \in P} \Z$, and let $\Barks(P) = \prod_{p \in P} \Z[x_q \mid q \leq p]$. Note that $\Barks(P)$ is a $\Marks(P)$-algebra. If $P$ is the poset of conjugacy classes of subgroups of $G$, then $\Marks(G) = \Marks(P)$ and $\Barks(G) \cong \Barks(P)$ as $\Marks(P)$-algebras. It turns out that $\Barks(P)$ admits the structure of a $\Marks(P)$-plethory.

We will focus attention on the composition operation (or plethysm) on $\Barks(P)$. Note that if $q \leq p$ then $\Z[x_r \mid r \leq q]$ is a subring of $\Z[x_r \mid r \leq p]$. We write $(x_r)^p$, when $r \leq p$, for the element of $\Barks(P)$ that is $x_r$ in the factor corresponding to $p$ and $0$ elsewhere. The plethysm on $\Barks(P)$ is determined by the formula
\[
(x_q)^p \circ (x_r)^u = \begin{cases}
        (x_r)^p, &\text{if $u=q$}\\
        0, &\text{otherwise.}
        \end{cases}
\]
This formula extends to arbitrary pairs of elements in $\Barks(P)$ since $(x_q)^p \circ (-)$ is a ring map for any $q \leq p$ and $(-) \circ f$ is a ring map for any $f \in \Barks(P)$. We note that this operation is quite simple -- merely combining the poset structure and the rules for composition of polynomials -- and does not, a priori, have anything to do with composition of bispans.

\begin{theorem} \label{mainthm}
The character map
\[
\chi \colon P(G) \to \Barks(G)
\]
sends the composition operation on $P(G)$ to the plethysm on $\Barks(G)$. 
\end{theorem}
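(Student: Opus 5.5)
The plan is to compute both sides of the identity explicitly on a basis element $f = [X \xrightarrow{\varphi} Y]$ and an arbitrary $g = [A \xrightarrow{\psi} B]$, using the explicit description of the composite bispan from \cref{CompDiagram}. Since every element of $P(G)$ is a difference of such classes, this suffices, provided we also check that both sides are additive in $g$. On the $\Barks(G)$ side this additivity is not automatic: $f\circ(-)$ is not additive in general. We therefore compare the two sides directly for arbitrary effective $g$, and then extend to differences using the fact that on both sides $f \circ (-)$ is a polynomial map compatible with the Grothendieck construction.

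\textbf{Step 1: a formula for $\chi_{[H]}$ as a monomial sum.} In the monoid ring $\Z[I_H]$, a sum $\sum_i [G/K_i]$ in $I_H$ is written multiplicatively as $\prod_i x_{[K_i]}$. For $y \in Y^H$, the fiber $\varphi^{-1}(y)$ is an $H$-set, and each $H$-orbit $Hz$ contributes $\Tr_H^G(H/H_z) = G/H_z$. Hence
\[
\chi_{[H]}(f) = \sum_{y \in Y^H} \prod_{Hz \subseteq \varphi^{-1}(y)} x_{[H_z]}.
\]
By the defining formula for plethysm, $(x_{q})^{p} \circ \chi(g)$ is the $q$-component of $\chi(g)$, viewed inside $\Z[x_r\mid r\le p]$. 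Since $(x_q)^p \circ (-)$ is a ring map, the $[H]$-component of $\chi(f)\circ\chi(g)$ is
\[
\sum_{y\in Y^H}\prod_{Hz\subseteq\varphi^{-1}(y)} \chi_{[H_z]}(g).
\]
This expression makes sense because $I_{H_z}\subseteq I_H$, so $\Z[I_{H_z}]\subseteq \Z[I_H]$, compatibly with $\Barks(G)\cong\Barks(P)$.

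\textbf{Step 2: the composite bispan.} Using the exponential diagram, $f\circ g$ (with $f$ outer) is $[X' \to Y']$, which I expect to be built as follows:
\begin{itemize}
\item $Y' = \Pi_\varphi(X\times B \to X)$, the $G$-set of pairs $(y,s)$ with $y\in Y$ and $s\colon \varphi^{-1}(y)\to B$ any function;
\item $X'$ is the set of triples $(y,s,(z,a))$ with $z\in\varphi^{-1}(y)$ and $\psi(a)=s(z)$;
\item the map $X'\to Y'$ is the projection.
\end{itemize}
In Tambara language this is $T_Y N_\varphi R_X(g)$.

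\textbf{Step 3: fixed points of the composite.} We have $Y'^H = \{(y,s) : y\in Y^H,\ s \text{ $H$-equivariant}\}$. The fiber over $(y,s)$ is the $H$-set $\coprod_{z\in\varphi^{-1}(y)}\psi^{-1}(s(z))$. Grouping by $H$-orbits, this is
\[
\coprod_{Hz} H\times_{H_z}\psi^{-1}(s(z)),
\]
so its transfer to $G$ is $\sum_{Hz}\Tr_{H_z}^G\psi^{-1}(s(z))$.

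An equivariant $s$ is the same as an independent choice of $s(z)\in B^{H_z}$ for one representative $z$ of each orbit. Summing over $s$ therefore factors as a product over orbits of $\sum_{b\in B^{H_z}}[\Tr_{H_z}^G\psi^{-1}(b)] = \chi_{[H_z]}(g)$. This matches Step 1 exactly.

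\textbf{Main obstacle.} The hardest step is Step 2: pinning down the exponential-diagram description of composition in $\P^G$ precisely enough (orientation, and which map gets the dependent product) to yield the stated $X'$ and $Y'$. Alongside this, we must verify that the comparison extends from effective $g$ to all of $P(G)$. For that I would first try the following argument. Both $g\mapsto \chi(f\circ g)$ and $g\mapsto\chi(f)\circ\chi(g)$ are given by the same universal polynomial formulas in the "coordinates" of $g$, namely its orbit counts. Two polynomial maps that agree on the effective cone agree on its group completion.
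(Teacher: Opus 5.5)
Your Steps 1--3 are correct and are the paper's computation in coordinate-free form. Your $X'\to Y'$ is \cref{formula_for_composition_product}. The identification of $(Y')^H$ with pairs $(y,s)$, $s$ equivariant, together with $\Fin^H(\varphi^{-1}(y),B)\cong\prod_{Hz}B^{H_z}$, is \cref{J_fixed_pts_description}. The fiber decomposition $\coprod_{Hz}H\times_{H_z}\psi^{-1}(s(z))$ is \cref{fiber_of_a_fixed_pt,technical_lemma}, and your Step 1 is \cref{RHS_of_compatibility_of_chi}. The paper first reduces to basis elements $[\coprod_i G/K_i\to G/L]$ with canonical quotient maps and indexes orbits by double cosets $M\backslash yL/K_i$; your $H_z$ is its ${}^{yl}\!K_i\cap M$. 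This forces it to fix orbit representatives through the functions $\gamma_i$. Working with orbits and stabilizers directly handles every effective $f$ at once and avoids that bookkeeping.

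What needs repair is your reduction paragraph, where the variables are reversed. To pass to virtual elements you need additivity in the outer variable $f$, and this holds on both sides: $(-)\circ g$ is an $A(G)$-algebra map for effective $g$ by \cref{algebra_map_in_first_coord}, and $(-)\circ h$ is a ring map in the plethory $\Barks(G)$. That fact, together with \cref{plethysm_depends_only_on_the_relevant_coordinate}, is also what justifies the expansion in your Step 1; the ring-map property of $(x_q)^p\circ(-)$ only yields $(x_q)^p\circ h=(h_q)^p$.

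The composition on $P(G)$ in the paper only ever has an effective second argument, since it is obtained by extending $(-)\circ g$ for $g\in P^+(G)$. So no extension in $g$ is needed: the paper proves the identity for effective $g$ and extends in the first variable. If you do want virtual $g$, you must first define $f\circ g$ there, for instance via Tambara's unique polynomial extension of the norm in $f\circ(-)=T_qN_pR_r$. Your assertion that $g\mapsto\chi(f\circ g)$ is a universal polynomial in the coordinates of $g$ is only known for effective $g$, from your own Steps 1--3. As written, that step presupposes the extension it is meant to justify.
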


Thus, for $G$ a Dedekind group, the plethory structure on $\Barks(G)$ fully captures the composition operation in $P(G)$. 

It is natural to wonder if $P(G)$ itself admits the structure of an $A(G)$-plethory. When $G = C_p$ is the cyclic group of order $p$ ($p$ prime), \cref{Cpplethory} proves that this is indeed the case. However, in \cref{C4notplethory}, we use \cref{mainthm} to show that $P(C_4)$ admits no $A(C_4)$-plethory structure compatible with the composition of bispans.

\subsection*{Acknowledgments} It is a pleasure to thank Mike Hill, Tomer Schlank, Ben Spitz, and Noah Wisdom for helpful conversations. The senior author thanks Lars Hesselholt for his suggestion to spend more time thinking about plethories. This paper came out of the Kentucky Bourbon Seminar and it is a pleasure to thank the other attendees including Josh Peterson and Will Stroupe.

\section{Burnside--Tambara Rings}

In this section we review the definition of the Burnside ring, introduce the Burnside--Tambara ring, and prove some first results regarding the Burnside--Tambara ring.

\subsection{Burnside Rings}

Let $G$ be a finite group and let $\Fin^G$ denote the category of finite left $G$-sets and $G$-equivariant maps. 
We construct a commutative semiring $A^+(G)$ as follows: as a set, $A^+(G)$ consists of the isomorphism classes of finite $G$-sets. Given a finite $G$-set $X$, we denote its isomorphism class by $[X] \in A^+(G)$. The sum and product in $A^+(G)$ are induced by the disjoint union and Cartesian product, respectively: 
\[
	[X] + [Y] = [X \amalg Y] 
	\qquad 
	\text{and}
	\qquad
	[X] \cdot [Y] = [X \times Y].
\] 
The additive and multiplicative identities are given by $0 = [\emptyset]$ and $1 = [G/G]$, respectively. As a commutative monoid under addition, $A^+(G)$ is the free commutative monoid on the set of isomorphism classes of transitive $G$-sets.

\begin{definition}\ 
\begin{enumerate}[(a)]
	\item We call the commutative semiring $A^+(G)$ the effective Burnside semiring of $G$.
	\item The Burnside ring of $G$, $A(G)$, is the Grothendieck group of $A^+(G)$ with respect to addition.
\end{enumerate}
\end{definition}

Using the canonical inclusion $A^+(G) \hookrightarrow A(G)$, we identify $A^+(G)$ with its image in the Burnside ring. In particular, we use the notation $[X] \in A(G)$ for the class of a finite $G$-set $X$ in the Burnside ring. If an element of $A(G)$ lies in the image of $A^+(G)$, we say that it is effective; it represents the isomorphism class of a finite $G$-set. Otherwise, we say that the element is virtual.

As a $\Z$-module, $A(G)$ is free with canonical basis given by the isomorphism classes of transitive $G$-sets: the classes $[G/H] \in A(G)$ as $H$ ranges over subgroups of $G$. Because two orbits $G/H$ and $G/K$ are isomorphic as $G$-sets if and only if $H$ is conjugate to $K$ in $G$, this basis is in bijection with conjugacy classes of subgroups of $G$. We can describe the product in $A(G)$ on this basis using the double-coset formula: there is an isomorphism of $G$-sets
\[
	G/H \times G/K \cong \coprod_{HgK \in H \backslash G / K} G/(^g\!H \cap K),
\]
where $^g\!H \coloneqq gHg^{-1}$. 

\begin{example}
	Let $G = C_p$ be the cyclic group of prime order $p$. There is an isomorphism of commutative rings
	\[
		A(C_p) \cong \Z[t] / (t^2 - pt),
	\]
    where $t = [C_p/e]$ and $1 = [C_p/C_p]$.  
\end{example}

\begin{example}
Let $G = C_4$ be the cyclic group of order $4$. There is an isomorphism of commutative rings
\[
A(C_4) \cong \Z[u,v]/(u^2-2u,v^2-4v,uv-2v),
\]
where $u = [C_4/C_2]$ and $v = [C_4/e]$.
\end{example}

\subsection{Spans and Burnside Rings}
\label{subsec:spans and burnside rings}

There is another description of the effective Burnside semiring $A^+(G)$, the utility of which will become clear once we generalize this construction. 

Let $\Span(\Fin^G)$ be the $1$-category of spans on $\Fin^G$. Objects of this category are finite $G$-sets. Let $X, Y\in \Span(\Fin^G)$. A morphism from $X$ to $Y$ is the isomorphism class of a span, i.e. a diagram of the form
\[
	\begin{tikzcd}
		X & A \ar[l] \ar[r] & Y
	\end{tikzcd}
\]
in $\Fin^G$. Two spans from $X$ to $Y$ are isomorphic if there is an isomorphism $A \xrightarrow{\cong} A'$ such that the following diagram commutes: 
\[
	\begin{tikzcd}[row sep=small]
		& A \ar[dl] \ar[dr] \ar[dd, "\cong" description] & \\
		X & & Y. \\
		& A' \ar[ul] \ar[ur] 
	\end{tikzcd}
\]
We write $[X \leftarrow A \rightarrow Y]$ for the isomorphism class of the span $X \leftarrow A \rightarrow Y$. Abusing terminology, we will refer to an isomorphism class of spans as a span. Composition in $\Span(\Fin^G)$ is given by pullback in $\Fin^G$, as in \cref{fig:span composition}. 
\begin{figure}[h!]
	\begin{center}
		\begin{tikzpicture}
			\node (X) at (0,0) {$X$};
			\node (A) at (2,1) {$A$};
			\node (Y) at (4,0) {$Y$};
			\node (B) at (6,1) {$B$};
			\node (Z) at (8,0) {$Z$};
			\node (C) at (4,2) {$A \times_Y B$};
			\draw[->] (A) -- (X);
			\draw[->] (A) -- (Y);
			\draw[->] (B) -- (Y);
			\draw[->] (B) -- (Z);
			\draw[->] (C) -- (A);
			\draw[->] (C) -- (B);
			\draw[blue,  line width=6mm, rounded corners, line cap=round, opacity=0.2]
				(0,0) -- (2,1) -- (4,0);
			\draw[red,   line width=6mm, rounded corners, line cap=round, opacity=0.2]
				(4,0) -- (6,1) -- (8,0);
			\draw[green, line width=4mm, rounded corners, line cap=round, opacity=0.2]
				(0,0) -- (4,2) -- (8,0);
			\node at (4,1.5) {\rotatebox{-45}{$\lrcorner$}};
		\end{tikzpicture}
	\end{center}
	\caption{Composition of spans. The span $[X \leftarrow A \rightarrow Y]$ is highlighted in blue, the span $[Y \leftarrow B \rightarrow Z]$ is highlighted in red, and the composite span $[Y \leftarrow B \rightarrow Z] \circ [X \leftarrow A \rightarrow Y]$ is highlighted in green. The central diamond of the figure is a pullback square in $\Fin^G$. }
	\label{fig:span composition}
\end{figure}

The category $\Span(\Fin^G)$ has all finite products, given by the ordinary disjoint union of $G$-sets.

\begin{proposition}
	For any two finite $G$-sets $X$ and $Y$, $\Span(\Fin^G)(X,Y)$ is a commutative monoid with addition given by disjoint union.  
\end{proposition}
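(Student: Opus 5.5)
The plan is to write down the operation explicitly and check the commutative monoid axioms directly on representatives. For spans $[X \leftarrow A \xrightarrow{} Y]$ and $[X \leftarrow A' \rightarrow Y]$, define their sum as $[X \leftarrow A \amalg A' \rightarrow Y]$. The two legs are the maps induced on the coproduct by the universal property of $\amalg$ in $\Fin^G$. The zero element will be the span $[X \leftarrow \emptyset \rightarrow Y]$, whose legs are the unique maps out of the empty $G$-set.

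The first step is well-definedness. If $f \colon A \xrightarrow{\cong} B$ and $f' \colon A' \xrightarrow{\cong} B'$ are isomorphisms of spans, then $f \amalg f' \colon A \amalg A' \to B \amalg B'$ is a $G$-equivariant isomorphism. It commutes with the induced legs to $X$ and $Y$, because it does so on each summand. So the sum depends only on isomorphism classes.

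The second step is the axioms.
\begin{enumerate}[(a)]
\item \emph{Associativity.} The canonical isomorphism $(A \amalg A') \amalg A'' \cong A \amalg (A' \amalg A'')$ is compatible with the maps to $X$ and $Y$, since both are defined summandwise.
\item \emph{Commutativity.} The swap isomorphism $A \amalg A' \cong A' \amalg A$ is likewise compatible with the legs.
\item \emph{Unit.} The map $A \amalg \emptyset \to A$ is an isomorphism over $X$ and $Y$.
\end{enumerate}
Each of these is an isomorphism of spans in the sense defined above, so the corresponding identities hold on isomorphism classes.

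There is no real obstacle here. The only point requiring care is that the sum is defined on isomorphism classes, and this is handled by the well-definedness step.

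It is worth remarking why the operation is called "disjoint union". The product of $X$ and $Y$ in $\Span(\Fin^G)$ is $X \amalg Y$. Under this, the addition above agrees with the one induced by the diagonal $[X \leftarrow X \amalg X \xrightarrow{\nabla} X]$ and the fold map. Concretely, one composes $X \to X \amalg X \xrightarrow{(a,a')} Y \amalg Y \to Y$, which can be checked by computing the pullbacks. This description is not needed for the proposition, though.
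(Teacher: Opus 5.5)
Your argument is correct. The paper states this proposition without proof, as a routine fact, and your direct verification is exactly the standard check: well-definedness on isomorphism classes via $f \amalg f'$, then associativity, commutativity and the unit via the canonical coproduct isomorphisms, all compatible with the induced legs. One small slip in your optional closing remark: the diagonal $X \to X \amalg X$ in $\Span(\Fin^G)$ is the span $[X \xleftarrow{\nabla} X \amalg X \xrightarrow{\id} X \amalg X]$, not a span with target $X$; with that fix, describing the sum via the diagonal and fold maps is the span analogue of \cref{ring_from_composition}.
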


In particular, $\Span(\Fin^G)(\ast,\ast)$ is not only a commutative monoid, but also a commutative semiring, with multiplication given by composition. This gives us a second description of the effective Burnside semiring. 

\begin{corollary}
	There is an isomorphism of commutative semirings
	\[
		\begin{tikzcd}[row sep=0]
			\Span(\Fin^G)(\ast,\ast) 
				\ar[r] 
				&
			A^+(G) ,
				\\
			{[\ast \leftarrow X \rightarrow \ast]}
				\ar[r, mapsto] 
				&
			{[X].}
		\end{tikzcd}
	\]
\end{corollary}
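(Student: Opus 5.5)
We must prove the corollary: $\Span(\Fin^G)(\ast,\ast)\to A^+(G)$, $[\ast\leftarrow X\rightarrow\ast]\mapsto[X]$, is an isomorphism of commutative semirings.

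We first check that the map is a bijection. The object $\ast$ is terminal in $\Fin^G$, so a span $\ast \leftarrow X \rightarrow \ast$ is the same data as a finite $G$-set $X$. An isomorphism of such spans is a $G$-isomorphism $X \cong X'$, because the commutativity conditions over $\ast$ are automatic. Hence isomorphism classes of spans correspond exactly to isomorphism classes of finite $G$-sets. This shows that the map is well defined, injective and surjective.

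Next we check the operations.

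\textbf{Addition.} By the preceding proposition, addition of spans is disjoint union of the middle objects. So $[\ast\leftarrow X\rightarrow \ast]+[\ast\leftarrow X'\rightarrow\ast]=[\ast\leftarrow X\amalg X'\rightarrow\ast]$, which maps to $[X\amalg X']=[X]+[X']$. The zero span $[\ast\leftarrow\emptyset\rightarrow\ast]$ maps to $[\emptyset]=0$.

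\textbf{Multiplication.} Multiplication is composition, computed by pullback over $\ast$. The pullback $X\times_\ast X'$ is the Cartesian product $X\times X'$ with the diagonal action, so the composite maps to $[X\times X']=[X]\cdot[X']$. The identity span $[\ast\leftarrow\ast\rightarrow\ast]$ maps to $[G/G]=1$.

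\textbf{Commutativity.} The multiplication on the span side is commutative because $X\times X'\cong X'\times X$ via the swap, consistent with $A^+(G)$ being commutative.

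No step is a real obstacle. The only point needing care is to note that the structure maps to the terminal object $\ast$ are unique. This uniqueness is what makes span isomorphisms the same as $G$-set isomorphisms, and what makes the pullback over $\ast$ the plain product.
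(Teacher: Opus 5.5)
Your proposal is correct and is the same direct verification the paper relies on (the paper states this corollary without proof, as immediate from the preceding proposition and from composition by pullback): spans $\ast \leftarrow X \rightarrow \ast$ are just finite $G$-sets because $\ast$ is terminal, addition is disjoint union, and the pullback over $\ast$ is the product $X \times X'$. Your separate commutativity check is redundant, since a bijection onto the commutative semiring $A^+(G)$ that preserves $+$, $\cdot$, $0$ and $1$ already transports all the semiring axioms.
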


With this description, the Burnside ring $A(G)$ is the Grothendieck group of the semiring of spans from a point to a point.

\subsection{Bispans}
Bispans generalize spans by replacing the middle object of a span by a morphism. 

\begin{definition}
	A bispan from $X$ to $Y$ in the category of finite $G$-sets is a diagram in $\Fin^G$ of the form 
    \[
        \begin{tikzcd}
			X 
			& A \ar[l, "\varphi"'] \ar[r, "\psi"] 
			& B \ar[r, "\omega"] 
			& Y.
		\end{tikzcd}
    \]
	Two bispans from $X$ to $Y$ are  isomorphic if there are isomorphisms $\alpha \colon A \xrightarrow{\cong} A'$ and $\beta \colon B \xrightarrow{\cong} B'$ such that the following diagram commutes: 
	\[
		\begin{tikzcd}[row sep = small]
			& A \ar[r, "\psi"]\ar[dl,"\varphi"']\ar[dd, "\alpha"',"\cong"] & B\ar[dd, "\beta"',"\cong"] \ar[dr, "\omega"] \\
			X & & & Y \\
			& A' \ar[r, "\psi'"'] \ar[ul, "\varphi'"] & B' \ar[ur, "\omega'"']
		\end{tikzcd}
	\]
	We write $[X \xleftarrow{\varphi} A \xrightarrow{\psi} B \xrightarrow{\omega} Y]$ for the isomorphism class of the  bispan $X \xleftarrow{\varphi} A \xrightarrow{\psi} B \xrightarrow{\omega} Y$. We will abuse terminology and refer to an isomorphism class of bispans as a bispan.
\end{definition}

Bispans in $\Fin^G$ are the morphisms in a category $\P^G \coloneqq \Bispan(\Fin^G)$ whose objects are finite $G$-sets. Composition in $\P^G$ is described in \cite[Section 7]{Tambara1993}. This composition is more complicated than composition of spans; we defer its description to \cref{sec:Composition of Bispans} below.  

\begin{remark}
	Bispans are commonly called polynomials in the modern category theory literature because they arise in connection with polynomial functors \cite{GambinoKock2013}. This motivates the choice of notation $\P^G$, which is known as the category of polynomials in $\Fin^G$. However, we will use the term bispan in order to distinguish between polynomials $f \in R[x_1, \ldots, x_n]$ and polynomial morphisms $[X \leftarrow A \rightarrow B \rightarrow Y]$ in $\P^G$. 
\end{remark}

Each object $Y\in\P^G$ canonically admits the structure of a commutative semiring object. For more detail, see the discussion before \cref{ring_from_composition}. This endows each morphism set $\P^G(X,Y)$ with the structure of a commutative semiring:

\begin{proposition}[{\cite[Proposition 7.6]{Tambara1993}}]
\label{proposition:semiring structure on bispans}
	For any two finite $G$-sets $X$ and $Y$, $\P^G(X,Y)$ is a commutative semiring with operations given as follows. If $\Sigma, \Sigma' \in \P^G(X,Y)$ are the bispans 
	\[
		\Sigma = [X \xleftarrow{\varphi} A \xrightarrow{\psi} B \xrightarrow{\omega} Y] \qquad \text{ and } \qquad 
		\Sigma' = [X \xleftarrow{\varphi'} A' \xrightarrow{\psi'} B' \xrightarrow{\omega'} Y],
	\]
	then
	\begin{align*}
		\Sigma + \Sigma'  
			&= 
		[X \xleftarrow{\varphi+\varphi'} A \amalg A' \xrightarrow{\psi \amalg \psi'} B \amalg B' \xrightarrow{\omega + \omega'} Y] \\
		\Sigma \cdot \Sigma' 
			&= 
		[X \xleftarrow{\widehat{\varphi}} (A \times_Y B') \amalg (B \times_Y A') \xrightarrow{\widehat{\psi}} B \times_Y B' \xrightarrow{\widehat{\omega}} Y],
	\end{align*}
	where 
	\begin{itemize}
		\item $\widehat{\varphi} = (\varphi \circ \pi_A) + (\varphi' \circ \pi_{A'})$ is given by projecting onto $A$ or $A'$ and then applying $\varphi$ or $\varphi'$; 
		\item $\widehat{\psi} = (\psi \times \id_{B'}) + (\id_B \times \psi')$ is given by applying $\psi$ to $A$ or $\psi'$ to $A'$;  and 
		\item $\widehat{\omega} = \omega \circ \pi_B = \omega' \circ \pi_{B'}$ is given by projecting onto either factor and then applying $\omega$ or $\omega'$.  
	\end{itemize}
	The additive and multiplicative identities in this semiring are 
	\[
		0 = [X \leftarrow \emptyset \rightarrow \emptyset \rightarrow Y] \qquad \text{ and } \qquad 
		1 = [X \leftarrow \emptyset \rightarrow Y \xrightarrow{\id_Y} Y]. 
	\]
\end{proposition}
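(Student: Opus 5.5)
We would prove \cref{proposition:semiring structure on bispans} directly, by showing that each $\P^G(X,Y)$ is a commutative semiring under the stated formulas. The approach is the standard one for isomorphism classes of diagrams: choose representatives, check that every axiom holds up to a canonical isomorphism of bispans, and check that the operations are well defined on isomorphism classes.

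\textbf{Well-definedness.} Suppose $(\alpha,\beta)$ is an isomorphism from $\Sigma$ to another representative of its class. Then $\alpha\amalg\id$ and $\beta\amalg\id$ give an isomorphism of the sums. For products, the maps $\alpha\times_Y\id_{B'}$, $\beta\times_Y\id_{A'}$ and $\beta\times_Y\id_{B'}$ give an isomorphism of the product bispans. This works because $\omega'\beta=\omega$ and $\psi'\alpha=\beta\psi$, so these maps are compatible with $\widehat{\varphi}$, $\widehat{\psi}$ and $\widehat{\omega}$. The argument in the second variable is symmetric.

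\textbf{Additive axioms.} Associativity, commutativity and the unit $0$ follow from the canonical associativity, symmetry and unit isomorphisms of $\amalg$ in $\Fin^G$. In each case one checks that the canonical isomorphism commutes with the legs.

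\textbf{Multiplicative axioms.} Commutativity uses the swap isomorphisms $B\times_Y B'\cong B'\times_Y B$ and $(A\times_YB')\amalg(B\times_YA')\cong(A'\times_YB)\amalg(B'\times_YA)$. For the unit, note that $1$ has $B=Y$ and $A=\emptyset$. So $\Sigma\cdot 1$ has middle object $(A\times_YY)\amalg\emptyset\cong A$ and right object $B\times_YY\cong B$, which recovers $\Sigma$.

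For associativity, both bracketings have right object $B\times_YB'\times_YB''$. Their left objects both decompose canonically as
\[(A\times_YB'\times_YB'')\amalg(B\times_YA'\times_YB'')\amalg(B\times_YB'\times_YA''),\]
using that fibre products over $Y$ distribute over $\amalg$ in $\Fin^G$. Under this identification the maps $\widehat{\psi}$ agree summand by summand, and so do the maps $\widehat{\varphi}$.

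\textbf{Distributivity.} Expand $\Sigma\cdot(\Sigma'+\Sigma'')$ using $B\times_Y(B'\amalg B'')\cong (B\times_YB')\amalg(B\times_YB'')$ and the corresponding decomposition of the middle object. This matches the sum of the two products summand by summand. The annihilation property $0\cdot\Sigma=0$ holds because every piece is a fibre product with $\emptyset$, hence empty.

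The main obstacle is bookkeeping rather than any real difficulty: one must make sure that the canonical isomorphisms are compatible with all three legs at once. The most delicate leg is $\widehat{\psi}$, which is defined piecewise on the summands. To handle it, we would view the product conceptually as follows. The bispan $\Sigma$ corresponds to the polynomial functor $T_\omega\Pi_\psi\Delta_\varphi$, where $\Delta_\varphi$ is pullback along $\varphi$, $\Pi_\psi$ is the dependent product along $\psi$, and $T_\omega$ is the dependent sum along $\omega$. On fibres over $y\in Y$, the formulas are then exactly addition and multiplication of ``polynomials'' $\sum_{b\in B_y}\prod_{a\in A_b}x_{\varphi(a)}$, that is, products of such sums. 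Expanding these products fibrewise makes each canonical isomorphism evident. Alternatively, one can simply cite \cite[Proposition 7.6]{Tambara1993}, as the statement does.
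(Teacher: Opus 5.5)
Your proposal is correct, but it takes a different route from the one the paper relies on. The paper does not reprove the statement. It cites Tambara and explains the structure categorically:
\begin{itemize}
\item by \cref{products_in_P^G}, $\amalg$ is the categorical product in $\P^G$;
\item each object $Y$ is a commutative semiring object, with addition $T_\nabla=[Y\amalg Y\xleftarrow{\id}Y\amalg Y\xrightarrow{\id}Y\amalg Y\xrightarrow{\nabla}Y]$ and multiplication $N_\nabla=[Y\amalg Y\xleftarrow{\id}Y\amalg Y\xrightarrow{\nabla}Y\xrightarrow{\id}Y]$;
\item the hom-set operations are $\Sigma+\Sigma'=T_\nabla\circ(\Sigma,\Sigma')$ and $\Sigma\cdot\Sigma'=N_\nabla\circ(\Sigma,\Sigma')$, and the explicit formulas come out of computing these composites (done for $X=Y=\ast$ in \cref{ring_from_composition}).
\end{itemize}

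You instead check the axioms directly on representatives. You use only coproducts and fibre products in $\Fin^G$ and the fact that fibre products distribute over $\amalg$; composition of bispans, dependent products and exponential diagrams never enter. Your route is elementary and self-contained. Its cost is hand-checking that every canonical isomorphism respects all three legs, and it says nothing about how the operations interact with composition.

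The categorical route moves the axioms into the semiring-object structure on $Y$. There the distributive law is exactly what the exponential diagram encodes when a norm is composed after a transfer. In exchange, it gives for free that precomposition $(-)\circ\Theta$ is a semiring homomorphism. That fact is what the paper actually uses, in \cref{algebra_map_in_first_coord} and hence in the main theorem. With your approach, it would have to be proved separately, together with the identification of your operations with $T_\nabla\circ(-,-)$ and $N_\nabla\circ(-,-)$.

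Two small slips:
\begin{itemize}
\item In the unit check you mean $A'=\emptyset$ and $B'=Y$.
\item In the well-definedness step your primes clash with $\Sigma'$. Writing $(\alpha,\beta)\colon\Sigma\to\Sigma_1$, you need $\varphi_1\alpha=\varphi$, $\psi_1\alpha=\beta\psi$ and $\omega_1\beta=\omega$. The first of these is needed for compatibility with $\widehat{\varphi}$, and you did not list it.
\end{itemize}
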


\subsection{Burnside--Tambara Rings}

The isomorphism $A^+(G) \cong \Span(\Fin^G)(\ast,\ast)$ suggests studying $\P^G(\ast,\ast)$, the commutative semiring of bispan endomorphisms of a point, and its Grothendieck group. 

\begin{notation}
	When the source and target of a bispan are both a point, we omit them from the notation, shortening $[\ast \leftarrow A \xrightarrow{\psi} B \rightarrow \ast]$ to $[A \xrightarrow{\psi} B]$ or sometimes to just $[\psi]$. 
\end{notation}

By \cref{proposition:semiring structure on bispans}, $\P^G(\ast,\ast)$ is a commutative semiring with operations
\begin{align*}
	[A \xrightarrow{\psi} B] + [A'\xrightarrow{\psi'}B'] 
		&= [A \amalg A' \xrightarrow{\psi \amalg \psi'} B \amalg B'] \text{ and } \\
	[A \xrightarrow{\psi} B] \cdot [A' \xrightarrow{\psi'} B'] 
		&= [(A \times B') \amalg (B \times A') \xrightarrow{(\psi \times \id_{B'}) + (\id_B \times \psi')} B \times B'], 
\end{align*}
with $0 = [\emptyset \rightarrow \emptyset]$ and $1 = [\emptyset \rightarrow \ast]$. We introduce the notation 
\[
	P^+(G) \coloneqq \P^G(\ast,\ast)
\] 
for this commutative semiring.

\begin{definition}\ 
\begin{enumerate}[(a)]
	\item We call the commutative semiring $P^+(G)$ the effective Burnside--Tambara semiring of $G$. 
	\item The Burnside--Tambara ring of $G$ is the ring $P(G)$ given by taking the Grothendieck group of $P^+(G)$ with respect to addition. 
\end{enumerate}
\end{definition}

We again identify elements of $P^+(G)$ with their image under the canonical inclusion into $P(G)$ and call such elements effective; they are represented by bispans. Otherwise we refer to an element as virtual.

\begin{remark}
	We name these rings Burnside--Tambara rings to emphasize their connections both to the classical Burnside rings and to Tambara functors, for which the polynomial categories $\P^G$ form the domains \cite{Tambara1993}. These rings assemble into a Tambara functor $G/H \mapsto P(H)$ isomorphic to the representable Tambara functor $\P^G(*,-)$. 

	While we are primarily interested in the rings $P(G)$ here, there is prior interest in these representable Tambara functors \cite{HMQ2023,MQS2024,MQS2025,CMQSV2025}. 
\end{remark}

\begin{remark}
\label{HmtpyTheory}
The Burnside--Tambara ring of $G$ arises in equivariant stable homotopy theory via \cite[Theorem 4.3]{Ullman}. We provide two other sketches of arguments explaining this, the first of which is closely related to Ullman's result.

Let $\text{Sp}^G_{\geq 0}$ denote the category of connective $G$-spectra and let $\mathcal{O}$ be a complete $\mathbb{N}_\infty$-operad. The functor $\underline{\pi}_0 \colon \text{Sp}^G_{\geq 0} \rightarrow \Mack_G$ is strong symmetric monoidal with right adjoint given by the Eilenberg-Mac Lane functor $H(-)$, inducing an adjunction between the categories of $\mathcal{O}$-algebras in $\text{Sp}_{\geq 0}$ and $\Mack_G$, respectively. One can verify that the square of right adjoints commutes: 

\begin{center}
\begin{tikzpicture}
\node (TL) at (0,2) {$\text{Alg}_{\mathcal{O}}(\text{Sp}^G_{\geq 0})$};
\node (BL) at (0,0) {$\text{Sp}^G_{\geq 0}$};
\node (TR) at (4,2) {$\text{Alg}_{\mathcal{O}}(\text{Mack}_G)$};
\node (BR) at (4,0) {$\text{Mack}_G$,};

\draw[->] (TR) to node[above] {$H(-)$} (TL);
\draw[->] (TL) to (BL);
\draw[->] (BR) to node[below] {$H(-)$} (BL);
\draw[->] (TR) to (BR);
\end{tikzpicture}
\end{center}
where the vertical arrows are the functors which forget $\mathcal{O}$-algebra structure and have left adjoints given by a free construction. Consequently, the square of left adjoints commutes. Evaluating both composites of the left adjoints at the equivariant sphere spectrum $\mathbb{S}_G$ yields the desired result: Recall that $\underline{\pi}_0(\mathbb{S}_G)$ is the Burnside Mackey functor. By \cite[Definition 3.3.0]{HMQ2023}, the free complete Tambara functor on the Burnside Mackey functor yields $\underline{P}$. Evaluating this Tambara functor at level $G/G$ gives the Burnside-Tambara ring for $G$.

This can also be proved Lawvere theoretically: the free $\mathcal{O}$-algebra on a point is a connective $G$-spectrum and hence may be viewed as a model of a (multi-sorted) Lawvere theory $\mathcal{L}$ in the $\infty$-category of spaces (see \cite[Section 4.2]{CHLL2024} for more details). This Lawvere theory is the group-completion of the $(2,1)$-category of bispans on the category of finite $G$-sets. The free $\mathcal{O}$-algebra on a point is given by $\mathcal{L}(*, -)$, and applying $\pi_0^G$ applies $\pi_0$ on the top level of $\mathcal{L}(*,-)$, giving us $\pi_0\mathcal{L}(*, *) \cong P(G)$.
\end{remark}

\begin{example}
    Let $G = e$ be the trivial group. The Burnside--Tambara ring of $e$ is the polynomial ring over the integers in one variable
    \[
        P(e) \cong \mathbb{Z}[x].
    \] 
    The isomorphism is given by 
     \[
    	\begin{tikzpicture}
			\node (A1) at (0.03,0) {$\big[\emptyset$};
			\node (A2) at (1,0) {$\underline{k}\big]$};
			\node (A3) at (2,0) {$k$};
			\node at (3,0) {and};
			\node (B1) at (4,0) {$\big[\ast$};
			\node (B2) at (5,0) {$\ast\big]$};
			\node (B3) at (6,0) {$x,$};
			\draw[->] (A1) -- (A2);
			\draw[|->] (A2) -- (A3);
			\draw[->] (B1) -- (B2);
			\draw[|->] (B2) -- (B3);
		\end{tikzpicture}
    \]
    where $\underline{k} = \{1,2,\ldots,k\}$.
\end{example}

\begin{example}
\label{BurnsideTambaraRingOfC2} \label{big_burnside_ring_Cp_example_computation}
    Let $C_p$ be the cyclic group of order $p$. Then a presentation of the Burnside--Tambara ring of $C_p$ is
    \[
        P(C_p) \cong A(C_p)[x,n]/(tx^p - tn),
    \]
    where $A(C_p) \cong \mathbb{Z}[t] / (t^2 - pt)$ is the Burnside ring of $C_p$, and $t = [C_p/e]$ is the class of a free orbit. 
    The isomorphism is determined by 
    \[
    	\begin{tikzpicture}[xscale=1.5]
			\node (A1) at (0.03,2) {$\big[\emptyset$};
			\node (A2) at (0.815,2) {$C_p/e\big]$};
			\node (A3) at (2, 2) {$t,$};
			\node (B1) at (0,1) {$\big[\ast$};
			\node (B2) at (1,1) {$\ast\big]$};
			\node (B3) at (2,1) {$x,$};
			\node (C1) at (0.15,0) {$\big[C_p/e$};
			\node (C2) at (1,0) {$\ast\big]$};
			\node (C3) at (2,0) {$n.$};
			\draw[->] (A1) -- (A2);
			\draw[|->] (A2) -- (A3);
			\draw[->] (B1) -- (B2);
			\draw[|->] (B2) -- (B3);
			\draw[->] (C1) -- (C2);
			\draw[|->] (C2) -- (C3);
		\end{tikzpicture}
    \]
    In general, for any $C_p$-set $X$, $[\emptyset \to X] \longmapsto [X] \in A(C_p)$. The case $p=2$ was considered in \cite[Lemma 3.6]{BH2019}. 
    \end{example}

    \begin{example}\label{big_burnside_ring_C4_example_computation}
    	Let $C_4$ be the cyclic group of order 4. The Burnside--Tambara ring of $C_4$ is 
	\[	
		P(C_4) \cong A(C_4)[x,m,n,s]/(x^4v - nv, x^2u - mu,x^2v- mv,sv-2x^2v, su-2s, sm - sx^2, s^2-2un),
	\]
	where $A(C_4) \cong \Z[u,v]/(u^2 - 2u, v^2 - 4v, uv - 2v)$ is the Burnside ring of $C_4$, and $u = [C_4/C_2]$ and $v = [C_4/e]$.  
	The isomorphism is determined by 
    \[
    	\begin{tikzpicture}[xscale=1.5]
			\node (A1) at (0,2) {$\big[\emptyset$};
			\node (A2) at (1,2) {$C_4/C_2\big]$};
			\node (A3) at (2,2) {$u,$};
			\node (B1) at (0,1) {$\big[\emptyset$};
			\node (B2) at (1,1) {$C_4/e\big]$};
			\node (B3) at (2,1) {$v,$};
			\node (C1) at (0,0) {$\big[\ast$};
			\node (C2) at (1,0) {$\ast\big]$};
			\node (C3) at (2,0) {$x,$};
			
			\node (D1) at (4,2) {$\big[C_4/C_2$};
			\node (D2) at (5,2) {$\ast\big]$};
			\node (D3) at (6,2) {$m,$}; 
			\node (E1) at (4,1) {$\big[C_4/e$};
			\node (E2) at (5,1) {$\ast\big]$};
			\node (E3) at (6,1) {$n,$}; 
			\node (F1) at (4,0) {$\big[C_4/e$};
			\node (F2) at (5,0) {$C_4/C_2\big]$};
			\node (F3) at (6,0) {$s.$}; 
			\draw[->]  (A1) -- (A2);
			\draw[|->] (A2) -- (A3);
			\draw[->]  (B1) -- (B2);
			\draw[|->] (B2) -- (B3);
			\draw[->]  (C1) -- (C2);
			\draw[|->] (C2) -- (C3);
			\draw[->]  (D1) -- (D2);
			\draw[|->] (D2) -- (D3);
			\draw[->]  (E1) -- (E2);
			\draw[|->] (E2) -- (E3);
			\draw[->]  (F1) -- (F2);
			\draw[|->] (F2) -- (F3);
		\end{tikzpicture}
    \]
    With these identifications, $\big[C_4/C_2 \to C_4/C_2\big] = ux$ and $\big[C_4/e \to C_4/e\big] = vx$. This presentation can be verified using \cref{character_map_rational_iso}.
    \end{example}

\subsection{Structure of Burnside--Tambara Rings}

In \cref{BurnsideTambaraRingOfC2}, we saw that bispans of the form $[\emptyset \to X] \in P(C_p)$ correspond to classes of $C_p$-sets $[X] \in A(C_p)$. Indeed, this holds for any group. 
\begin{proposition}
    The map $A(G) \to P(G)$ determined by $[X] \mapsto [\emptyset \to X]$ is an injective ring homomorphism.
\end{proposition}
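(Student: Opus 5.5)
We need to prove that the map $\iota\colon A(G) \to P(G)$, $[X] \mapsto [\emptyset \to X]$, is an injective ring homomorphism. The plan is to check the ring axioms at the level of semirings first, then pass to Grothendieck groups, and finally produce a left inverse to obtain injectivity.

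\textbf{Step 1: additivity and units.} Define $\iota^+\colon A^+(G) \to P^+(G)$ by $[X] \mapsto [\emptyset \to X]$. This is well defined, since an isomorphism $X \cong X'$ gives an isomorphism of bispans with $\alpha = \id_\emptyset$. Using the formulas following \cref{proposition:semiring structure on bispans},
\[
[\emptyset \to X] + [\emptyset \to X'] = [\emptyset \amalg \emptyset \to X \amalg X'] = [\emptyset \to X \amalg X'].
\]
The units match directly: $0 = [\emptyset \to \emptyset]$ and $1 = [\emptyset \to \ast]$.

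\textbf{Step 2: multiplicativity.} Again by the formulas,
\[
[\emptyset \to X]\cdot[\emptyset \to X'] = [(\emptyset \times X') \amalg (X \times \emptyset) \to X \times X'] = [\emptyset \to X\times X'].
\]
So $\iota^+$ is a homomorphism of commutative semirings. By the universal property of group completion, it extends uniquely to an additive map $\iota\colon A(G) \to P(G)$. Multiplicativity persists, because both sides of $\iota(ab) = \iota(a)\iota(b)$ are biadditive and agree on effective elements.

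\textbf{Step 3: injectivity via a left inverse.} Consider the assignment $[A \xrightarrow{\psi} B] \mapsto [B]$. It is well defined on isomorphism classes of bispans and sends sums to disjoint unions. It does not preserve products, since the product bispan has target $B \times B'$, but we only need additivity. It therefore extends to an additive map $\rho\colon P(G) \to A(G)$ of Grothendieck groups, and $\rho \circ \iota = \id_{A(G)}$. Hence $\iota$ is injective.

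The one subtle point is that injectivity must be verified on the group completion rather than the semiring. The left inverse $\rho$ handles this cleanly and avoids any need to analyse cancellation in $P^+(G)$.
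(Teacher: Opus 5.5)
Your proof is correct. The paper states this proposition without proof. The justification it implicitly relies on is the additive basis of \cref{basis_for_burnside_tambara_ring}: $\iota$ sends the basis element $[G/H]$ of $A(G)$ to the basis element $[\emptyset \to G/H]$ of $P(G)$, and distinct conjugacy classes give distinct basis elements, so $\iota$ is injective. This is also the content of the paper's remark that $A(G)$ is the degree-$0$ part $P(G)_0$. Alternatively, $\chi\circ\iota$ is the classical marks homomorphism, which is injective by \cref{classicalmarks}.

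Your left inverse $\rho$ is more elementary than either route. It needs neither the orbit decomposition of codomains nor Dress's theorem, and it handles the passage to Grothendieck groups purely through the universal property. Like the basis argument, it exhibits $A(G)$ as a direct summand of $P(G)$, since $\rho\iota=\id$.

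One side remark in Step 3 is wrong, though it does not affect your argument: $\rho$ \emph{does} preserve products. The product $[A \to B]\cdot[A'\to B']$ has codomain $B\times B'$, which is exactly why $\rho$ sends it to $[B\times B'] = [B]\cdot[B']$. Together with $\rho(1) = [\ast] = 1$, this makes $\rho$ a ring retraction of $\iota$. Conceptually, \cref{formula_for_composition_product} with $A=\emptyset$ and $B=\ast$ shows that $\iota\rho(\sigma) = \sigma\circ[\emptyset\to\ast] = \sigma\circ 1$. This composite is a ring map by \cref{algebra_map_in_first_coord}, and for $G=e$ it is the evaluation $f\mapsto f(1)$ on $\Z[x]$.
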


This homomorphism makes $P(G)$ into an $A(G)$-algebra and, in particular, an $A(G)$-module. We note that, even in the $G = C_2$ case, $P(G)$ is not a free $A(G)$-module. Like $A(G)$, the Burnside--Tambara ring $P(G)$ has a canonical additive basis.

\begin{proposition}
\label{basis_for_burnside_tambara_ring}
As a $\Z$-module, $P(G)$ has a basis given by 
\[
    \big\{ 
        \left[X \to G/H\right]
        \ \big\vert\ 
        X \in \Fin^G \text{and } H \subseteq G
    \big\}.
\]
\end{proposition}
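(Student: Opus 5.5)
The plan is to show that the additive monoid of the semiring $P^+(G)$ is a free commutative monoid. Its free generators will be the isomorphism classes of bispans $[X \xrightarrow{\psi} G/H]$ whose target is a single orbit. Once this holds, the statement follows formally: the Grothendieck group of a free commutative monoid on a set $S$ is the free abelian group on $S$, and $P(G)$ is by definition the Grothendieck group of $P^+(G)$ with respect to addition. Here the index set in the statement is read up to isomorphism of bispans, so $H$ runs over subgroups up to conjugacy and $X \to G/H$ over isomorphism classes of $G$-sets over $G/H$.

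\emph{Existence.} Take an effective bispan $[A \xrightarrow{\psi} B]$. Write $B = \coprod_i B_i$ as the disjoint union of its $G$-orbits, so $B_i \cong G/H_i$ for some $H_i \subseteq G$. Set $A_i = \psi^{-1}(B_i)$; each $A_i$ is $G$-stable because $\psi$ is equivariant. Then $A = \coprod_i A_i$ and $\psi = \coprod_i \psi|_{A_i}$. By the formula for addition in $P^+(G)$, this gives
\[
[A \xrightarrow{\psi} B] = \sum_i [A_i \xrightarrow{\psi|_{A_i}} B_i].
\]
The case $B = \emptyset$ is the empty sum, which is $0 = [\emptyset \to \emptyset]$.

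\emph{Uniqueness.} Suppose $\coprod_i (A_i \to B_i)$ and $\coprod_j (A'_j \to B'_j)$ are isomorphic, where every $B_i$ and $B'_j$ is transitive. An isomorphism $(\alpha,\beta)$ of bispans has $\beta$ a $G$-isomorphism $\coprod_i B_i \to \coprod_j B'_j$. Such a $\beta$ sends orbits bijectively to orbits, giving a bijection $\sigma$ on the index sets with $\beta(B_i) = B'_{\sigma(i)}$. Since $\psi' \alpha = \beta \psi$, the map $\alpha$ carries $A_i = \psi^{-1}(B_i)$ isomorphically onto $\psi'^{-1}(B'_{\sigma(i)}) = A'_{\sigma(i)}$. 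Hence the two families of summands agree as multisets of isomorphism classes.

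\emph{Conclusion.} Existence and uniqueness together identify $P^+(G)$ additively with the free commutative monoid on isomorphism classes of bispans with transitive target. Passing to the Grothendieck group yields the $\Z$-basis in the statement.

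There is no real obstacle here. The only point needing care is the uniqueness step, namely checking that an isomorphism of bispans restricts to isomorphisms of the individual summands. This holds because an equivariant isomorphism preserves the orbit decomposition of $B$, and the decomposition of $A$ is pulled back from that of $B$ along $\psi$.
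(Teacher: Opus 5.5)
Your proof is correct and takes the same approach as the paper: decompose the target into $G$-orbits $B_i$ and split the bispan as $\sum_i [\psi^{-1}(B_i) \to B_i]$. You additionally spell out two steps the paper leaves implicit: the uniqueness check (an isomorphism of bispans permutes the orbits of the target and so matches the summands), and the passage from a free commutative monoid to its Grothendieck group.
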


\begin{proof}
For a given bispan $[X \xrightarrow{\varphi} Y]$, we can decompose $Y$ into orbits as $Y \cong 
\coprod_{i=1}^n G/H_i$, and then 
\[
    [X \xrightarrow{\varphi} Y] = \sum_{i=1}^n \left[\varphi^{-1}\left(G/H_i\right) \to G/H_i\right]
\]
gives the unique decomposition.
\end{proof}

  Note that by \cref{basis_for_burnside_tambara_ring}, the basis elements for $P(G)$ are all of the form 
\begin{equation*}
    \left[ \coprod_{i=1}^{n} G/K_i \xrightarrow[]{\psi} G/L \right]. 
\end{equation*}
The next proposition shows that these can be assumed to be in a special form.

\begin{proposition}\label{basis_elements_are_canonical_quotients}
Every basis element of $P(G)$ described in \cref{basis_for_burnside_tambara_ring} is of the form
\begin{equation*}
    \left[ \coprod_{i=1}^{n} G/K_i \xrightarrow[]{\psi} G/L \right],
\end{equation*}
where each $K_i \subseteq L$ and the map $\psi_i \colon G/K_i \to G/L$ on each summand is the canonical quotient map $eK_i \mapsto eL$.
\end{proposition}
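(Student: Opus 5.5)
The plan is to show that any basis element $[X \xrightarrow{\psi} G/L]$ from \cref{basis_for_burnside_tambara_ring} is isomorphic, as a bispan, to one in the stated form. Two bispans $[X \to Y]$ and $[X' \to Y']$ are equal in $P^+(G)$ exactly when there are $G$-isomorphisms $\alpha\colon X \to X'$ and $\beta\colon Y\to Y'$ with $\psi'\alpha = \beta\psi$. The maps to the point impose no condition. So the task is to replace $X$ by a disjoint union of orbits in which each orbit carries a convenient chosen basepoint.

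First I would take $\beta = \id_{G/L}$ and work one orbit of $X$ at a time. Decompose $X$ into orbits $X = \coprod_{i=1}^n O_i$. Since $\psi$ is $G$-equivariant, it sends each orbit $O_i$ into the transitive set $G/L$. For each $i$ I choose a point $x_i \in O_i$ with $\psi(x_i) = eL$. Such a point exists because $\psi(O_i)$ is a nonempty $G$-stable subset of the transitive set $G/L$, hence all of $G/L$. Concretely, if $\psi(x) = gL$ for some $x \in O_i$, take $x_i = g^{-1}x$.

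Next set $K_i = \Stab(x_i)$. Equivariance gives $K_i \subseteq \Stab(\psi(x_i)) = \Stab(eL) = L$. The orbit map $G/K_i \to O_i$, $gK_i \mapsto g x_i$, is a $G$-isomorphism. Composing it with $\psi$ sends $gK_i$ to $g\psi(x_i) = gL$, which is exactly the canonical quotient map $G/K_i \to G/L$.

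Finally I assemble these orbit isomorphisms into $\alpha\colon \coprod_i G/K_i \xrightarrow{\cong} X$. The square with $\beta = \id$ commutes by the previous step, so the bispan classes agree. The only point needing care is the choice of basepoint in each orbit so that it maps to $eL$; this is where the transitivity of $G/L$ enters, and the rest is routine bookkeeping.
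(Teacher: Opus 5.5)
Your proof is correct and is essentially the paper's argument. The paper starts from an orbit decomposition $\coprod_i G/K_i$ and conjugates each $K_i$ by an element $g_i$ with $g_iK_ig_i^{-1}\subseteq L$, producing a commutative square with the identity on $G/L$. This is the same as your choice of a basepoint $x_i$ over $eL$ in each orbit: your $K_i=\Stab(x_i)$ is the paper's ${}^{g_i}K_i$. Your basepoint formulation also makes explicit what the paper leaves implicit, namely that $g_i$ must be the element determined by $\psi_i(eK_i)=g_i^{-1}L$, not an arbitrary witness of subconjugacy.
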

\begin{proof}
    Note that the existence of a $G$-equivariant morphism 
    $G/K_i \to G/L$ implies the existence of an element $g_i \in G$ which 
    witnesses the fact that $K_i$ is $G$-subconjugate to $L$ i.e., 
    $g_iK_ig_i^{-1} \subseteq L$. Using these $g_i$'s, we can 
    construct commutative squares 
    
    \[\begin{tikzcd}
    	{\coprod_{i=1}^{n}G/K_i} && G/L \\
    	\\
    	{\coprod_{i=1}^{n}G/g_iK_ig_i^{-1}} && G/L,
    	\arrow["{\psi_i}", from=1-1, to=1-3]
    	\arrow["{K_i \mapsto K_i g_i^{-1}}", from=1-1, to=3-1]
    	\arrow["{\text{id}}"', from=1-3, to=3-3]
    	\arrow[from=3-1, to=3-3]
    \end{tikzcd}\]
    where the two vertical legs are isomorphisms and the bottom
    map is the sum of canonical quotient maps.
\end{proof}
There is a natural grading on the commutative ring $P(G)$. 

\begin{definition}[Graded Ring Structure]
A bispan $[X \xrightarrow{\varphi} Y]$ is homogeneous of degree $m$ if $|\varphi^{-1}(y)| = m$ for all $y \in Y$.
\end{definition}

\noindent With this grading, the Burnside ring $A(G)$ is the degree $0$ part of $P(G)$. We write $P(G)_m$ for the abelian group of homogeneous degree $m$ elements of $P(G)$. 

In order to better understand the canonical basis for $P(G)$, let $\P^G_{(G/K, G/H)}(*,*)$ denote the subset of $P(G)$ consisting of bispans of the form $[* \leftarrow G/K \to G/H \to *]$. This set is in bijective correspondence with the set of isomorphism classes of arrows in the arrow category of finite $G$-sets with source $G/K$ and target $G/H$. For $H \subseteq G$, consider the inclusion $\Sub(H) \hookrightarrow \Sub(G)$ of sets of subgroups. There is a conjugation action of the normalizer $N_G(H)$ on the source and an action of $G$ by conjugation on the target. The quotient by these actions is well-defined and gives a map 
\[
	\Phi_H \colon \Sub(H)/N_{G}(H) \to  \Sub(G)/G.
\]

The purpose of the following proposition is to show that the additive basis for $P(G)$ can be quite complicated -- in contrast to $A(G)$.

\begin{proposition} \label{prop:isoclassesarrows}
    There is a canonical bijection between the set $\P^G_{(G/K, G/H)}(*,*)$ and the fiber $\Phi_H^{-1}([K])$.
\end{proposition}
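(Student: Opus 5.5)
The plan is to build the bijection explicitly, using the normal form from \cref{basis_elements_are_canonical_quotients}. First recall what $\Phi_H^{-1}([K])$ is: it is the set of $N_G(H)$-conjugacy classes of subgroups $K' \subseteq H$ that are $G$-conjugate to $K$. Every element of $\P^G_{(G/K,G/H)}(*,*)$ is an isomorphism class of arrows $G/K \to G/H$. By (the proof of) \cref{basis_elements_are_canonical_quotients}, such an arrow is isomorphic in the arrow category to a canonical quotient $q_{K'}\colon G/K' \to G/H$, $gK' \mapsto gH$. Here $K' = g_0 K g_0^{-1} \subseteq H$, so $K'$ is $G$-conjugate to $K$.

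Define
\[
\Theta\colon \P^G_{(G/K,G/H)}(*,*) \to \Phi_H^{-1}([K]), \qquad [q_{K'}] \mapsto [K']_{N_G(H)}.
\]
Surjectivity is immediate: any $K' \subseteq H$ that is $G$-conjugate to $K$ gives $G/K' \cong G/K$, and then the canonical quotient $q_{K'}$ is a preimage. The substance is showing that $\Theta$ is well defined and injective. Both reduce to one claim:
\[
q_{K'} \cong q_{K''} \text{ as arrows} \iff K'' = m^{-1}K'm \text{ for some } m \in N_G(H).
\]

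To prove the claim I will use the standard description of $G$-maps between orbits. A $G$-map $G/A \to G/B$ has the form $gA \mapsto gxB$ for some $x$ with $x^{-1}Ax \subseteq B$, and it is an isomorphism exactly when $x^{-1}Ax = B$. In particular, the automorphisms of $G/H$ are $gH \mapsto gnH$ with $n \in N_G(H)$.

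For the forward direction, suppose $(\alpha,\beta)$ is an isomorphism from $q_{K'}$ to $q_{K''}$, with $\alpha(gK') = gmK''$, $m^{-1}K'm = K''$, and $\beta(gH) = gnH$, $n \in N_G(H)$. Evaluating the commutativity condition $q_{K''}\circ\alpha = \beta\circ q_{K'}$ at $eK'$ gives $mH = nH$. Hence $m = nh$ for some $h \in H$, so $m \in N_G(H)$, and $K''$ is $N_G(H)$-conjugate to $K'$.

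For the converse, given $m \in N_G(H)$ with $K'' = m^{-1}K'm$, set $\alpha(gK') = gmK''$ and $\beta(gH) = gmH$. Both are isomorphisms, and the square commutes by direct evaluation.

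I expect the only real obstacle to be bookkeeping rather than ideas. The conjugation conventions ($^g\!H$ versus $g^{-1}Hg$, and left versus right translation in the orbit maps) must be kept consistent with those in \cref{basis_elements_are_canonical_quotients}. One also has to check that $\Theta$ does not depend on the choice of canonical representative. That independence is exactly the forward direction of the claim, applied to two canonical quotients isomorphic to the same arrow, so no separate argument is needed.
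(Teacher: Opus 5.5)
Your proposal is correct and is essentially the paper's argument. Your $\Theta$ is the paper's map $F\colon[\varphi_g]\mapsto[g^{-1}Kg]$, since $\varphi_g\cong q_{g^{-1}Kg}$, and the two directions of your key claim are exactly its well-definedness and injectivity checks, with surjectivity handled the same way. The only difference is bookkeeping: normalizing to canonical quotients puts the conjugating element into an isomorphism $G/K'\to G/K''$ rather than into an automorphism of $G/K$ given by some $g_4\in N_G(K)$, which makes the computation slightly shorter.
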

\begin{proof}
	Recall that an equivariant map $G/K \to G/H$ is determined by the image of the identity coset. 
	For $g \in G$, let $\varphi_g \colon G/K \to G/H$ be the equivariant map determined by $\varphi_g(eK) = gH$. Consider the map
	\[
	\begin{tikzcd}[row sep=0]
		F \colon \P^G_{(G/K, G/H)}(*,*) \ar[r] & \Phi^{-1}_H({[K]}),\\
		{[G/K \xrightarrow[]{\varphi_g} G/H]} \ar[r, mapsto] & {[g^{-1}Kg]}.
	\end{tikzcd}
	\]

   To see that $F$ is well-defined, consider two isomorphic bispans $[G/K \xrightarrow[]{\varphi_{g_1}} G/H]$ and 
   $[G/K \xrightarrow[]{\varphi_{g_2}} G/H]$ so that the 
   following diagram 
   \[
	\begin{tikzcd}
		{G/K} \ar[r, "\varphi_{g_1}"]\ar[d,"eK \mapsto g_4K"',"\cong"] & {G/H} \ar[d, "\cong"', "eH \mapsto g_3H"] \\
		{G/K} \ar[r, "\varphi_{g_2}"] & {G/H},
	\end{tikzcd}
	\] 
    commutes where $g_3 \in N_G(H)$ and $g_4 \in N_G(K)$.

Then $F([G/K \xrightarrow[]{\varphi_{g_1}} G/H]) = [g_1^{-1} K g_1 ]$ and 
$F([G/K \xrightarrow[]{\varphi_{g_2}} G/H]) = [g_2^{-1} K g_2]$, and we want to show that $g_1^{-1}Kg_1$ and $g_2^{-1}Kg_2$ are $N_G(H)$-conjugate subgroups of $H$. 
The diagram above asserts that 
\[
	g_1g_3H = g_4g_2H. 
\]
In particular, there exists an $h \in H$ such that 
$g_1 g_3 h = g_4 g_2$. Left-multiplying by $g_4^{-1}$ shows that 
$g_4^{-1} g_1 g_3 h = g_2$. Then  
\begin{align*}
    g_2^{-1} K g_2 &= (g_4^{-1} g_1 g_3 h)^{-1} K (g_4^{-1} g_1 g_3 h)\\
    &= h^{-1} g_3^{-1} g_1^{-1} g_4 K g_4^{-1} g_1 g_3 h\\
    &= h^{-1} g_3^{-1} g_1^{-1} K g_1 g_3 h &\text{ since } g_4 \in N_G(K)\\
    &= (g_3 h)^{-1} (g_1^{-1} K g_1) (g_3 h). 
\end{align*}
This witnesses the fact that $g_1^{-1}K g_1$ and $g_2^{-1}Kg_2$ are conjugate by an element of $N_G(H)$, namely $g_3h$. Hence, $F$ is well-defined.

To see that $F$ is injective, suppose we have $[g_1^{-1}Kg_1]$ and $[g_2^{-1}Kg_2]$ that 
are $N_G(H)$-conjugate. This means that 
there exists a $g \in N_{G}(H)$ such that 
\begin{equation*}
    g_1^{-1}Kg_1 = g^{-1}g_2^{-1}Kg_2g.
\end{equation*}
and, moreover, that 
\begin{equation*}
    K = g_1g^{-1}g_2^{-1}Kg_2gg_1^{-1}.
\end{equation*}
This implies that $g_1g^{-1}g_2^{-1} \in N_G(K)$. Then the following commutative diagram
   \[
	\begin{tikzcd}
		{G/K} \ar[r, "\varphi_{g_1}"]\ar[d,"eK \mapsto g_1g^{-1}g_2^{-1}K"',"\cong"] & {G/H} \ar[d, "\cong"',"eH \mapsto g^{-1}H"] \\
		{G/K} \ar[r, "\varphi_{g_2}"] & {G/H},
	\end{tikzcd}
	\] 
    shows that the bispans $[G/K \xrightarrow[]{\varphi_{g_1}} G/H]$ and $[G/K \xrightarrow[]{\varphi_{g_2}} G/H]$ are isomorphic. This shows the injectivity. 

To see that $F$ is surjective, recall that there exists a $G$-equivariant map $G/K \to G/H$ sending $eK \to gH$ if and only if $g^{-1}Kg \subseteq H$. It follows that for each conjugate of $K$ lying inside $H$ 
there is an isomorphism class of $G$-equivariant maps of the form $[G/K \to G/H]$ that maps to it under $F$. This shows the required bijection. 
\end{proof}

If we specialize the group $G$ to be Dedekind (a group whose subgroups are all normal) things simplify:  

\begin{proposition}
\label{dedekind_grps_character_iso_lemma}
    For $G$ a Dedekind group, there is a bijection between the degree $m$ additive basis elements of $P(G)$ of the form $[X \to G/H]$ and the collection of (multi)sets of subgroups $\{ K_1, \ldots, K_n \}$ of $H$ satisfying
    \begin{equation*}
        \sum_{i=1}^n |H/K_i| = m. 
    \end{equation*}
\end{proposition}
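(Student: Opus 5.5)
We build on \cref{basis_for_burnside_tambara_ring} and \cref{basis_elements_are_canonical_quotients}: every degree $m$ basis element of the form $[X \to G/H]$ can be written as
\[
\left[\coprod_{i=1}^n G/K_i \xrightarrow{\psi} G/H\right]
\]
with each $K_i \subseteq H$ and each $\psi_i$ the canonical quotient. Because $G$ is Dedekind, every subgroup is normal, so conjugacy classes of subgroups are just subgroups. The proposed bijection sends such a bispan to the multiset $\{K_1,\ldots,K_n\}$. The fiber of $\psi$ over the identity coset $eH$ is $\coprod_i H/K_i$, and all fibers have the same size by equivariance. Hence the bispan is homogeneous of degree $m$ exactly when $\sum_i |H/K_i| = m$.

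\textbf{Step 1: well-definedness.} An isomorphism of bispans $[X \to G/H] \cong [X' \to G/H]$ includes an isomorphism $X \cong X'$ of $G$-sets. The orbit decomposition of $X$ therefore determines the multiset of stabilizer conjugacy classes, which for Dedekind $G$ is the multiset of subgroups $\{K_i\}$. So the assignment is well defined. Surjectivity is immediate: any multiset of subgroups of $H$ with $\sum|H/K_i| = m$ is realized by the disjoint union of canonical quotients.

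\textbf{Step 2: injectivity.} This is the main point. Suppose two bispans
\[
\left[\coprod G/K_i \xrightarrow{\psi} G/H\right] \quad\text{and}\quad \left[\coprod G/K_i \xrightarrow{\psi'} G/H\right]
\]
have the same multiset. After \cref{basis_elements_are_canonical_quotients}, both may be taken with all components canonical quotients. After reordering summands, both maps are literally the same map, so the two bispans are equal.

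The subtle issue is whether the normalization in \cref{basis_elements_are_canonical_quotients} really lands on the canonical quotient for the same $K_i$, rather than for a conjugate ${}^{g_i}K_i$. For Dedekind $G$ we have ${}^{g_i}K_i = K_i$, so the normalized maps agree. Equivalently, one can use \cref{prop:isoclassesarrows} summand by summand. Since every subgroup is normal, $\Phi_H^{-1}([K])$ is a single point: the subgroup $K$ itself, provided $K \subseteq H$. So there is exactly one isomorphism class of arrow $G/K \to G/H$.

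To pass from single orbits to disjoint unions, take the automorphism of the source to be the coproduct of the componentwise isomorphisms, and the automorphism of $G/H$ to be the identity. This works because the normalization in \cref{basis_elements_are_canonical_quotients} uses the identity on the target.
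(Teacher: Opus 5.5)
Your proposal is correct and takes essentially the same approach as the paper. In both, the bispan goes to the multiset of orbit stabilizers, the degree is read off from the fiber $\coprod_i H/K_i$ over $eH$, and injectivity comes from \cref{basis_elements_are_canonical_quotients} together with normality forcing ${}^{g_i}K_i = K_i$; you additionally spell out why the summandwise isomorphisms glue (the normalization is the identity on $G/H$), which the paper leaves implicit.
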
   

\begin{proof}
    Decompose $X$ into orbits 
    $X \cong \coprod_i G/K_i$, for some $K_i \subseteq G$. Let $\varphi \colon \coprod_{i} G/K_i \to G/H$ be a map of $G$-sets, and let $\{ \varphi_i \colon G/K_i \to G/H \}$
    be the set of maps out of the summands. Recall that there exists a 
    $G$-equivariant map $G/K_i \to G/H$ if and only if there exists
    an $g \in G = N_G(H)$ such that $g^{-1} K_i g = K_i \subseteq H$,
    where the equality is due to the fact that $G$ is Dedekind. 
    This yields the required set of subgroups  of $H$. The degree $m$ elements of the basis 
    are the bispans that have fibers with cardinality
    $m$. It follows that 
    \begin{equation*}
     \varphi^{-1}(eH) \cong \coprod_i \varphi_i^{-1}(eH).    
    \end{equation*}
    Taking the cardinality of both sides, we have 
    \begin{equation*}
        |\varphi^{-1}(eH)| = \sum_{i}|\varphi_i^{-1}(eH)| = \sum_i |H/K_i|. 
    \end{equation*}
    To complete the bijection, note that for any set of subgroups 
    $\{ K_i \}$ of $H$ that satisfy 
    $\sum_{i} |H/K_i| = m$, we can construct the bispan $[\coprod_i G/K_i \xrightarrow[]{\psi_e} G/H]$ which maps each $eK_i$ to $eH$. To see this map is an inverse, it suffices to show that there is a unique isomorphism class of maps $[\coprod_i G/K_i \to G/H].$ Because every subgroup of $G$ is normal, this follows from \cref{basis_elements_are_canonical_quotients}.
\end{proof}

Finally, we note that in the case of a Dedekind group there is a particularly simple finite set of generators of $P(G)$ as a commutative ring. It is known (see \cite[Proposition 3.12]{schuchardt2025algebraicallyclosedfieldsequivariant}) that $P(G)$ is a finitely generated commutative ring for any finite $G$.

\begin{proposition}
For $G$ a Dedekind group, the semiring $P^+(G)$ is generated by the finite set
\[
\left\{\left[ \left( \coprod_{K \subseteq H} \coprod_{i = 1}^{j_K} G/K \right) \to G/H \right]\ \middle\vert\ 0 \leq j_K \leq |G/H| \text{ for all } K \subseteq H \right\}.
\]
\end{proposition}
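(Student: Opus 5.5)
The plan is to combine the additive basis of \cref{basis_for_burnside_tambara_ring} with the classification of basis elements for Dedekind groups in \cref{dedekind_grps_character_iso_lemma}. Every element of $P^+(G)$ is a finite sum of basis bispans $[X \to G/H]$. Since $G$ is Dedekind, such a bispan is determined up to isomorphism by $H$ and by the multiplicities $j_K \geq 0$ of the orbits $G/K$, $K \subseteq H$, in $X$. The maps are forced to be the canonical quotients, by \cref{basis_elements_are_canonical_quotients} and the uniqueness argument in the proof of \cref{dedekind_grps_character_iso_lemma}. It therefore suffices to show the following: every $[X \to G/H]$ with arbitrary multiplicities $j_K$ is a product of elements of the proposed finite set.

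The key step is a product computation with the bispans $[G/K \to \ast]$ for $K \subseteq G$. These lie in the generating set: take $H = G$, $j_K = 1 \leq |G/G|$, and all other multiplicities $0$. By the product formula,
\[
[X \to G/H]\cdot[G/K \to \ast] = \big[X \amalg (G/H \times G/K) \to G/H\big],
\]
where $G/H \times G/K \to G/H$ is the projection. For $K \subseteq H$ with $H$ normal, the double coset formula gives $G/H \times G/K \cong \coprod_{H\backslash G/K} G/(H\cap K)$. Here $H \cap K = K$, and $|H\backslash G/K| = |G/H|$ because $HgK = gH$. So $G/H \times G/K$ is $|G/H|$ copies of $G/K$, each mapping to $G/H$ by a $G$-map. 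By the Dedekind uniqueness of isomorphism classes of maps $\coprod_i G/K_i \to G/H$, the result is the basis element whose multiplicities are $j_{K'}$ for $K' \neq K$ and $j_K + |G/H|$ at $K$. In other words, multiplying by $[G/K \to \ast]$ raises $j_K$ by exactly $|G/H|$ and changes nothing else.

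Given arbitrary multiplicities, write $j_K = q_K |G/H| + r_K$ with $0 \leq r_K < |G/H|$. Let $X_r = \coprod_{K \subseteq H} \coprod_{i=1}^{r_K} G/K$. Iterating the step above gives
\[
[X \to G/H] = [X_r \to G/H] \cdot \prod_{K \subseteq H} [G/K \to \ast]^{q_K}.
\]
Every factor here lies in the proposed finite set, which proves generation.

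The main obstacle I anticipate is the identification of $[X \amalg (G/H\times G/K) \to G/H]$ with the canonical basis element. This needs care with the orbit decomposition of $G/H \times G/K$ and with the fact that the projection restricted to each orbit is some $G$-map $G/K \to G/H$, not necessarily the canonical one. I would handle this by invoking normality of all subgroups, exactly as in \cref{basis_elements_are_canonical_quotients}, so that any map $\coprod_i G/K_i \to G/H$ is isomorphic over $G/H$ to the sum of canonical quotients.
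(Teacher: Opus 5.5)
Your proposal is correct and follows essentially the same route as the paper. The paper also uses the double coset formula to show that $[G/K \to G/G]\cdot[Y \to G/H]$ adjoins $|G/H|$ copies of $G/K$ over $G/H$, so that products of elements of the finite set recover every basis element. Your division-with-remainder step makes explicit the paper's closing remark that $j_K \le |G/H|-1$ suffices for proper $H$.
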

\begin{proof}
For $G$ a Dedekind group, the double coset formula implies that
\[
[G/K \to G/G] \cdot [Y \to G/H] = \left[Y \amalg \coprod_{i=1}^{|G/H|} G/K \to G/H \right]
\]
for $K \subseteq H$. Thus, products of elements in the finite set described in the proposition give rise to an additive basis for $P^+(G)$.

We see from the proof that we really only need $0 \leq j_K \leq |G/H|-1$ for $H$ a proper subgroup of $G$ in the statement of the proposition.
\end{proof}

\section{Character Theory}

In this section, we review the character theory for the Burnside ring and generalize it to the Burnside--Tambara ring. We describe the relationship between this character theory and the work of Thevenaz, as well as a further generalization involving transfer systems.
\subsection{Character Theory for Burnside Rings}

Let $\Sub(G)$ denote the set of subgroups of $G$, and write $\Sub(G)/G$ for the set of $G$-conjugacy classes of subgroups of $G$. For $H \in \Sub(G)$, we write $[H] \in \Sub(G)/G$ for the conjugacy class of $H$. Recall that $\Sub(G)/G$ is a partially ordered set, where $[K] \leq [H]$ if $K$ is subconjugate to $H$ (i.e., there exists $g \in G$ such that $K^g \subseteq H$).

\begin{definition}\ 
\begin{enumerate}[(a)]
	\item The ring of marks of $G$ is the ring of $\Z$-valued functions on the set of conjugacy classes of subgroups of $G$: 
	\[
		\Marks(G) \coloneqq \Set(\Sub(G)/G,\Z) \cong \prod_{\Sub(G)/G} \Z. 
	\]
	\item The marks homomorphism (or character map) is the ring homomorphism
	\[
		\chi \colon A(G) \to \Marks(G)
	\]
	given by sending $[X] \in A(G)$ to the function $\chi([X])$ with value at $[H]$ given by
	\[
		\chi([X])_{[H]} = |X^H|,
	\]
	where $X^H$ is the set of $H$-fixed points of the $G$-set $X$.
\end{enumerate}
\end{definition}

To see that the marks homomorphism is well-defined, it suffices to note that for a $G$-set $X$, $g \in G$, and $H \subseteq G$, the action of $g^{-1}$ gives a bijection $X^H \cong X^{(H^g)}$.

\begin{theorem}[\cite{Dress}, \cite{TomDieck79}]  \label{classicalmarks}
	The marks homomorphism $\chi \colon A(G) \to \Marks(G)$ is an injective ring homomorphism, and it is an isomorphism after rationalization (i.e.\ $\chi \otimes \Q$ is an isomorphism). 
\end{theorem}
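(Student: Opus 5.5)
The plan is to prove the three assertions of \cref{classicalmarks} in turn: that $\chi$ is a ring homomorphism, that it is injective, and that it becomes an isomorphism after tensoring with $\Q$.

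\textbf{Ring homomorphism.} For each $H$, taking fixed points commutes with disjoint unions and Cartesian products:
\[
(X \amalg Y)^H = X^H \amalg Y^H \qquad\text{and}\qquad (X\times Y)^H = X^H \times Y^H.
\]
Also $(G/G)^H$ is a point. So $X \mapsto |X^H|$ is a semiring map $A^+(G) \to \Z$. By the universal property of group completion it extends uniquely to a ring map $A(G) \to \Z$. Well-definedness on conjugacy classes was already noted above. Hence $\chi$ is a ring homomorphism.

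\textbf{Injectivity.} I use the basis $\{[G/K]\}$, indexed by $[K] \in \Sub(G)/G$, and the partial order by subconjugacy. The key computation is
\[
(G/K)^H = \{gK : H^{g} \subseteq K\},
\]
where $H^{g}$ means the conjugate $g^{-1}Hg$ sending $gK$ to a fixed point. Two consequences follow:
\begin{itemize}
\item $\chi([G/K])_{[H]} = 0$ unless $[H] \le [K]$;
\item $\chi([G/K])_{[K]} = |N_G(K)/K| \neq 0$.
\end{itemize}
Choose a linear extension of the poset $\Sub(G)/G$. With respect to the basis $\{[G/K]\}$ of $A(G)$ and the standard basis of $\Marks(G) = \prod \Z$, the matrix of $\chi$ is then triangular. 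Its diagonal entries are $|N_G(K)/K|$, all nonzero. This triangularity claim is the one step needing care: one must check that vanishing below the diagonal follows from the subconjugacy condition, and that the diagonal entry counts exactly the cosets $gK$ with $g \in N_G(K)$.

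\textbf{Rational isomorphism.} Given triangularity, the determinant is $\prod_{[K]} |N_G(K)/K| \neq 0$. So $\chi$ is injective, being a map between free $\Z$-modules of the same finite rank with nonzero determinant. After tensoring with $\Q$ the determinant is a unit, so $\chi \otimes \Q$ is an isomorphism. I expect no step to be a serious obstacle; the only real content is the fixed-point computation and the bookkeeping of the order.
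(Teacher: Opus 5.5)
Your proposal is correct and is the standard triangularity argument. The paper only cites Dress and tom Dieck here, but its remark that the table of marks is a change-of-basis matrix, together with its proof of \cref{character_map_rational_iso} ($\chi[\psi]_{[M]}=0$ unless $[M]\le[L]$, with diagonal entries $|W_G(L)|$ times a monomial), is exactly your approach, including the fixed-point computation $(G/K)^H=\{gK : g^{-1}Hg\subseteq K\}$ and the diagonal count $|N_G(K)/K|$.
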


	We may describe the image of $A(G)$ inside $\Marks(G)$ using a table of marks: a table whose entries are $\chi([G/H])_{[K]} = |(G/H)^K|$, with rows labeled by the isomorphism classes of transitive $G$-sets $[G/H]$ and columns labeled by conjugacy classes of subgroups $[K] \in \Sub(G)/G$. This table is the change-of-basis matrix from the basis of $\Q \otimes \Marks(G)$ given by indicator functions on $\Sub(G)/G$ to the basis of $\Q \otimes \Marks(G)$ given by $(\chi\otimes\Q)([G/H])$.

\begin{example}
\label{tableofmarks}
Below are the tables of marks for $C_p$ (left) and $C_4$ (right).

\begin{center}

\renewcommand{\arraystretch}{1.2}
\begin{tabular}{lcc}
         \toprule
         $ $
         &  $e$ 
         & $C_p$
         \\ \midrule
          $[C_p/e]$
         &  
         $p$
         & 
          $0$ \\
          $[C_p/C_p]$
          &
          $1$
          &
          $1$
        \\  \bottomrule
        \phantom{1}
        \\
\end{tabular} 
\hspace*{1cm}
\begin{tabular}{lccc}
         \toprule
         $ $
         &  $e$  & $C_2$ & $C_4$ \\ 
         \midrule
          $[C_4/e]$ &   $4$ &  $0$ & $0$ \\
          $[C_4/C_2]$ & $2$ & $2$ & $0$ \\
          $[C_4/C_4]$ & $1$ & $1$ & $1$
          \\
          \bottomrule
\end{tabular} 

\end{center}
\end{example}

\subsection{Character Theory for Burnside--Tambara Rings} 

Here, we develop a character theory for the Burnside--Tambara ring analogous to that of the Burnside ring. We will define the character map for an arbitrary finite group, but it will only be injective if the group is Dedekind. The target of this character theory will be a $\Marks(G)$-algebra called $\Barks(G)$.

Recall that, for $H \subseteq G$, there is an additive transfer map (also called an induction map)
\[
\Tr_{H}^{G} \colon A^+(H) \to A^+(G)
\]
defined by $\Tr_{H}^{G}([X]) = [G \times_H X]$. We will abuse notation and also write $\Tr_{H}^{G}(X) = G \times_H X$, for $X$ an $H$-set, so that $\Tr_{H}^{G}([X]) = [\Tr_{H}^{G}(X)]$. This map sends basis elements to basis elements since $[G \times_H (H/K)] = [G/K]$. When $G$ is a Dedekind group, $\Tr_{H}^{G}$ is injective. 

We describe $\Barks(G)$ in terms of the effective Burnside semiring $A^+(G)$. Let $I_H = \im \Tr_{H}^{G} \subseteq A^+(G)$, be the image of the transfer map. This is the summand of $A^+(G)$ on the basis elements of the form $[G/K]$ for $K \subseteq H$. It is closed under both addition and multiplication. Further, $I_H$ only depends on the conjugacy class of $H$ inside of $G$. We define
\[
\Barks(G) := \prod_{[H]} \Z[I_H],
\]
where $\Z[I_H]$ is the monoid ring of the (additive) commutative monoid $I_H$. Note that if $K$ is subconjugate to $H$ then $I_K \subseteq I_H$ and $\Z[I_K]$ is a subring of $\Z[I_H]$. 

Recall that $P(G)$ is a graded ring.  The ring $\Barks(G)$ admits a grading as well. The ring $\Z[I_H]$ is a polynomial ring with generators $x_{[K]} := [G/K] \in I_H$ for $K \subseteq H$ and where $[K] \in \Sub(G)/G$. We declare that the degree of $x_{[K]} \in \Z[I_H]$ is $|H/K|$. We write $\Barks(G)_m$ for the abelian group of homogeneous degree $m$ elements of $\Barks(G)$. When $G$ is Dedekind, we will write $x_K = x_{[K]}$ as $\Sub(G)/G = \Sub(G)$ in this case.

\begin{definition}
\label{character_formula}
    The marks homomorphism (or character map), $\chi: P(G) \rightarrow \Barks(G)$, for the Burnside--Tambara ring is given by 
    \[ \chi([X \xrightarrow{\varphi} Y])_{[H]} = \sum_{y \in Y^H} [G \times_H \varphi^{-1}(y)] = \sum_{y \in Y^H} [\Tr_{H}^{G} \varphi^{-1}(y)], \]
    regarding $\varphi^{-1}(y)$ as an $H$-set and where the sum takes place in $\Z[I_H]$. 
\end{definition}

To see that $\chi$ is a well-defined map, consider the situation where $[H] = [H']$ so $H' = gHg^{-1}$ for some $g \in G$. Multiplication by $g$ gives a bijection $Y^H \xrightarrow{\cong} Y^{gHg^{-1}}$. For $y \in Y^H$, $\varphi^{-1}(y)$ is an $H$-set and $\varphi^{-1}(gy)$ is an $H'$-set and the $G$-sets $G \times_H \varphi^{-1}(y)$ and $G \times_{H'} \varphi^{-1}(gy)$ are isomorphic via $[\ell,x] \mapsto [\ell g^{-1},gx]$. This shows that $\chi([X\xrightarrow{\varphi} Y])_{[H]} = \chi([X\xrightarrow{\varphi} Y])_{[H']}$. It then suffices to consider what happens when we take different representatives of the same bispan
$[X_1\xrightarrow{\varphi_1} Y_1] = [X_2 \xrightarrow{\varphi_2} Y_2]$. Let $\psi_Y \colon Y_1\to Y_2$ and $\psi_X \colon X_1\to X_2$ be $G$-equivariant isomorphisms that witness the equality of bispans. Because $\psi_Y$ is a $G$-equivariant isomorphism, it restricts to an isomorphism between $Y_1^H$ and $Y_2^H$. Similarly, $\psi_X$ restricts to an $H$-set isomorphism $\phi_{1}^{-1}(y) \xrightarrow{\cong} \phi_{2}^{-1}(\psi_X(y))$ for $y \in Y_{1}^{H}$. This, in addition to the relation $\varphi_2 \psi_X = \psi_Y\varphi_1$, allows us to conclude that 
\begin{align*}
\sum_{y \in Y_1^H}[\Tr_{H}^G \varphi_1^{-1}(y)] &= \sum_{y \in Y_1^H}[\Tr_{H}^G (\psi_X^{-1}\varphi_2^{-1}\psi_Y)(y)] \\ 
&= \sum_{\psi_Y(y) \in Y_2^H}[\Tr_{H}^G (\psi_X^{-1}\varphi_2^{-1})(\psi_Y(y))]\\
&=\sum_{\psi_Y(y)\in Y_2^H}[\Tr_{H}^G \varphi_2^{-1}(\psi_Y(y))]\\
&= \sum_{y \in Y_2^H}[\Tr_{H}^G \varphi_2^{-1}(y)].
\end{align*}
We conclude that $\chi$ is well-defined. 

Note that, without the transfer in the definition of $\chi$, the map would not be well-defined as distinct ways of conjugating $H$ to $H'$ in $G$ may result in different isomorphisms between $A^+(H)$ and $A^+(H')$. Also note that $\chi$, when restricted to $A(G) = P(G)_0$, recovers the marks homomorphism of \cref{classicalmarks}.

\begin{proposition} \label{prop:charmap}
    The character map $\chi$ is a graded ring homomorphism.
\end{proposition}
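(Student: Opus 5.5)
It suffices to work one conjugacy class at a time: for each subgroup $H$ we show that $\chi_{[H]} \colon P(G) \to \Z[I_H]$ is a graded ring homomorphism. Since $P(G)$ is the Grothendieck ring of $P^+(G)$, each $\chi_{[H]}$ extends uniquely to $P(G)$ once it is additive and multiplicative on $P^+(G)$. So I will check the identities on effective bispans $[X \xrightarrow{\varphi} Y]$, using the formulas for sum and product in $P^+(G)$ recorded after \cref{proposition:semiring structure on bispans}.

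\textbf{Units and additivity.} For $0=[\emptyset\to\emptyset]$ the sum defining $\chi_{[H]}$ is empty, so $\chi_{[H]}(0)=0$. For $1=[\emptyset\to\ast]$ the sum has a single term $[\Tr_H^G\emptyset]=[\emptyset]$. This is the identity element of the monoid $I_H$, hence the unit $1$ of the monoid ring $\Z[I_H]$.

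For additivity, note that $(Y\amalg Y')^H = Y^H \amalg Y'^H$, and that the fibre of $\varphi\amalg\varphi'$ over a point of $Y$ is just $\varphi^{-1}(y)$. Splitting the sum gives $\chi_{[H]}(\Sigma+\Sigma')=\chi_{[H]}(\Sigma)+\chi_{[H]}(\Sigma')$.

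\textbf{Multiplicativity.} This is the only real step. We use
\[
(Y\times Y')^H=Y^H\times Y'^H .
\]
For $(y,y')$ in this set, the fibre of $(\varphi\times\id_{Y'})+(\id_Y\times\varphi')$ over $(y,y')$ is $\varphi^{-1}(y)\times\{y'\}\amalg\{y\}\times\varphi'^{-1}(y')$. As an $H$-set this is isomorphic to $\varphi^{-1}(y)\amalg\varphi'^{-1}(y')$, because $y$ and $y'$ are $H$-fixed. Since $\Tr_H^G$ preserves disjoint unions, and the monoid in $\Z[I_H]$ is the additive one in $A^+(G)$, the basis element attached to $(y,y')$ is
\[
[\Tr_H^G\varphi^{-1}(y)]\cdot[\Tr_H^G\varphi'^{-1}(y')],
\]
where the product is taken in the monoid ring. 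Summing over $Y^H\times Y'^H$ factors as the product of the two sums, so $\chi_{[H]}(\Sigma\Sigma')=\chi_{[H]}(\Sigma)\chi_{[H]}(\Sigma')$.

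The main point to watch is exactly this identification: the $H$-fixedness of $y$ and $y'$ is what turns the product bispan's fibre into a disjoint union of $H$-sets, and disjoint union is what corresponds to multiplication in the monoid ring. Everything else is bookkeeping.

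\textbf{Grading.} Let $\Sigma=[X\xrightarrow{\varphi}Y]$ be homogeneous of degree $m$, and write $\varphi^{-1}(y)\cong\coprod_i H/K_i$. Then $[\Tr_H^G\varphi^{-1}(y)]=\prod_i x_{[K_i]}$, which has degree $\sum_i|H/K_i|=|\varphi^{-1}(y)|=m$. Hence $\chi_{[H]}$ maps $P(G)_m$ into $\Barks(G)_m$, and $\chi$ is a graded ring homomorphism.
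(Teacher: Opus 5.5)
Your proposal is correct and follows essentially the same route as the paper: check the units, split the sum over $(Y\amalg Y')^H$ for additivity, and identify the fibre of the product bispan over $(y,y')\in Y^H\times Y'^H$ with $\varphi^{-1}(y)\amalg\varphi'^{-1}(y')$, so that the transfer becomes a product in the monoid ring $\Z[I_H]$. Your grading step writes $\varphi^{-1}(y)\cong\coprod_i H/K_i$ to get $\prod_i x_{[K_i]}$ of degree $\sum_i |H/K_i| = m$; this just spells out what the paper asserts in one line.
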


\begin{proof}
Let $[H] \in \Sub(G)/G$. Note that $\chi([\emptyset \rightarrow \emptyset])_{[H]} = 0$ and $\chi([\emptyset \rightarrow *])_{[H]} = \Tr^G_H([\emptyset])  = [\emptyset]$, which is $1$ in $\Z[I_H]$, so that $\chi$ respects the additive and multiplicative identities.

    To see that $\chi$ is additive, let $[X \xrightarrow[]{\varphi} Y]$ and $[X' \xrightarrow[]{\varphi'} Y']$
    be two elements of the Burnside--Tambara ring. We compute 
    \begin{align*} 
    \chi([X \amalg X' \xrightarrow{\varphi \amalg \varphi'} Y \amalg Y' ])_{[H]}
    &= \sum_{z \in (Y \amalg Y')^H} [G \times_H \left( \varphi \amalg \varphi'\right)^{-1}(z)]\\
    &= \sum_{y \in Y^H} [G \times_H \varphi^{-1}(y)] + \sum_{y' \in Y'^H} [G \times_H \varphi'^{-1}(y')]\\
    &= \chi ([X \xrightarrow{\varphi} Y ])_{[H]} + \chi ([X' \xrightarrow{\varphi'} Y' ])_{[H]}.
    \end{align*}
    
    For multiplicativity, we have 
    \begin{align*}
        &\chi ([(X \times Y') \amalg (X' \times Y) \xrightarrow{(\varphi \times \id_{B'}) + (\id_B \times \varphi')} Y \times Y'])_{[H]}\\ 
        =& \sum_{(y,y') \in (Y \times Y')^H} [ G \times_H ((\varphi \times \id_{B'}) + (\id_B \times \varphi'))^{-1}(y,y')] \\
        =& \sum_{(y,y') \in (Y \times Y')^H} [G \times_H \varphi^{-1}(y)] \times [G \times_H \varphi'^{-1}(y')] \\
        =& \left(\sum_{y \in Y^H} [G \times_H \varphi^{-1}(y)] \right) \cdot \left(\sum_{y' \in Y'^H} [G \times_H \varphi'^{-1}(y')] \right) \\
        =& \chi([X \xrightarrow{\varphi} Y])_{[H]} \cdot \chi([X' \xrightarrow{\varphi'} Y'])_{[H]}.
    \end{align*}
    
    Now assume that $[X \xrightarrow{\phi} Y] \in P(G)_m$ is homogeneous of degree $m$ so that for each $y \in Y$, we have $|\phi^{-1}(y)| = m$. Thus for $H \subseteq G$ and $y \in Y^H$, the $H$-set $\phi^{-1}(y)$ has cardinality $m$. The grading on $\Barks(G)$ is such that the degree of $[\Tr_{H}^{G}\phi^{-1}(y)] \in \Z[I_H]$ is $m$. 
\end{proof}

\begin{example}
\label{character_of_a_basis_element}
    We compute the characters of the additive basis elements of $P(G)$ described in \cref{basis_elements_are_canonical_quotients}. Recall that these are the bispans of the form
    $$\left[\coprod_{i=1}^n G/K_i \xrightarrow{\psi} G/L\right],$$
    where each $K_i\subseteq L$ and each summand $\psi_i$ of $\psi$ is the canonical quotient map $G/K_i\to G/L$ given by $xK_i\mapsto xL$. To apply the character formula of \cref{character_formula}, we need to know the $M$-fixed points of the codomain of $\psi$ and the $M$-set $\psi^{-1}(yL)$ for $yL\in (G/L)^M$ for every $[M]\in \Sub(G)/G$. The $M$-fixed point set $(G/L)^M$ consists of precisely those $yL \in G/L$ for which $y^{-1}My\subseteq L$. The fiber of an $M$-fixed point $yL$ under the summand $\psi_i$ is those $xK_i$ for which $xL = yL$, i.e. $x = yl$ for some $l\in L$, or equivalently $xK_i \in yL/K_i$. As an $M$-set, this admits an orbit decomposition given by the double coset formula
    $$yL/K_i \cong\coprod_{MylK_i \in M \backslash yL / K_i}{M/(({}^{yl} \! K_i) \cap M)}.$$
    Applying the character formula, we get
    \begin{align*}
        \chi[\psi]_{[M]}
        &= \chi\left( \left[ \coprod_{i=1}^{n} G/K_i \xrightarrow[]{\psi} G/L \right] \right)_{[M]}\\
        &= \sum_{\substack{yL \in G/L \\ y^{-1}My \subseteq L}}\left[\Tr^G_M(\psi^{-1}(y))\right]\\
        &= \sum_{\substack{yL \in G/L \\ y^{-1}My \subseteq L}}\left[\Tr^G_M\left(\coprod_{i=1}^{n} yL/K_i \right)\right]\\
        &= \sum_{\substack{yL \in G/L \\ y^{-1}My \subseteq L}}\left[\Tr^G_M\left(\coprod_{i=1}^{n} \coprod_{MylK_i \in M \backslash yL / K_i}{M/(({}^{yl} \! K_i) \cap M)} \right)\right]\\
        &= \sum_{\substack{yL \in G/L \\ y^{-1}My \subseteq L}}\prod_{i=1}^{n} \prod_{MylK_i \in M \backslash yL / K_i}\left[\Tr^G_M\left({M/(({}^{yl} \! K_i) \cap M)} \right)\right]\\
        &= \sum_{\substack{yL \in G/L \\ y^{-1}My \subseteq L}}\prod_{i=1}^{n} \prod_{MylK_i \in M \backslash yL / K_i}x_{[({}^{yl} \! K_i) \cap M]},
    \end{align*}
    where the fifth line followed from the fact that transfers preserve coproducts, the coproduct is the addition operation in the monoid $I_H$, and that turns into the product operation in the monoid ring $\Z[I_H]$.
    
    In particular, if $M$ is not subconjugate to $L$, this is the empty sum and hence $\chi[\psi]_{[M]} = 0$. If $[M] = [L]$, then we end up summing over $yL \in N_G(L)/L$ and there is only one double coset as $MylK_i = L$. Further, the transfer $\Tr_L^G(L/{}^{yl}\!K_i)$ is isomorphic to the transfer $\Tr_L^G(L/K_i)$. Therefore, when $[M] = [L]$ we have
    $$\chi[\psi]_{[L]} = |W_G(L)| \prod_{i=1}^n x_{[K_i]},$$
    where $W_G(L) = N_G(L)/L$ is the Weyl group of $L\subseteq G$.
\end{example}

\begin{proposition}
\label{basis_for_barks}
There is a canonical bijection between the degree $m$ monomial basis of the factor of $\Barks(G)_m$ corresponding to $[H] \in \Sub(G)/G$ and the collection of (multi)sets
\[
\{[K_1], \ldots, [K_n] \mid [K_i] \in \Sub(G)/G, [K_i] \leq [H], \text{ and } \sum_{i=1}^{n} |H|/|K_i| = m\}. 
\]
\end{proposition}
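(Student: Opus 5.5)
The plan is to identify the degree $m$ monomials in the factor $\Z[I_H]$ of $\Barks(G)$ directly from the definition of $\Z[I_H]$ as a monoid ring. As observed before \cref{character_formula}, $I_H \subseteq A^+(G)$ is the free commutative submonoid on the basis elements $[G/K]$ with $K \subseteq H$. Two such basis elements agree exactly when the subgroups are $G$-conjugate. So the free generators of $I_H$ correspond bijectively to the classes $[K] \in \Sub(G)/G$ with $[K] \leq [H]$. Here I would note that $[K]\le[H]$ means some conjugate of $K$ lies in $H$, and then $[G/K] = [G/K^g]$. Consequently $\Z[I_H]$ is the polynomial ring on the variables $x_{[K]}$, $[K] \leq [H]$. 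Its monomial $\Z$-basis is precisely the set of elements of the monoid $I_H$, namely finite formal sums $[G/K_1] + \cdots + [G/K_n]$, which become the products $x_{[K_1]}\cdots x_{[K_n]}$.

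Next I would set up the bijection explicitly. Send a multiset $\{[K_1],\ldots,[K_n]\}$ with each $[K_i]\le [H]$ to the monomial $\prod_i x_{[K_i]}$ in the factor corresponding to $[H]$. The inverse reads off the exponent of each variable. This map is well defined and bijective onto all monomials because the polynomial ring is free on the $x_{[K]}$. Equivalently, an element of the free commutative monoid $I_H$ has a unique expression as a sum of generators.

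Finally I would check that the grading matches. By definition, $\deg x_{[K]} = |H/K| = |H|/|K|$ in $\Z[I_H]$. Note that $|K|$ depends only on the conjugacy class, so this is independent of representatives. A monomial $\prod_i x_{[K_i]}$ therefore has degree $\sum_i |H|/|K_i|$, and the degree $m$ monomials correspond exactly to the multisets whose sum equals $m$.

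The only point needing care is whether the degree $|H/K|$ is well defined when $[K] \le [H]$ but $K \not\subseteq H$. Choosing a conjugate $K^g \subseteq H$ resolves this, since $|H/K^g| = |H|/|K|$. Otherwise the argument is purely formal.
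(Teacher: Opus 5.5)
Your proposal is correct and takes the same approach as the paper. The paper's proof simply sends the multiset $\{[K_1],\ldots,[K_n]\}$ to the monomial $\prod_{i=1}^n x_{[K_i]}$ in the $[H]$-factor and notes that it has degree $\sum_i |H|/|K_i| = m$. You add the details the paper leaves implicit: bijectivity follows from $I_H$ being free on the classes $[G/K]$ with $[K]\le[H]$, and the degree does not depend on the choice of representative.
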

\begin{proof}
The monomial associated to the set
\[
\left\{[K_1], \ldots, [K_n] \;\middle |\; [K_i] \in \Sub(G)/G, [K_i] \leq [H], \text{ and } \sum_{i=1}^{n} |H|/|K_i| = m\right\}
\]
is $\prod_{i=1}^nx_{[K_i]}$ in the factor of $\Barks(G)$ corresponding to $[H]$, which has degree $m$.
\end{proof}

\begin{proposition}\label{character_map_rational_iso}
    If $G$ is a Dedekind group, then the map $\chi \colon P(G) \rightarrow \Barks(G)$ is injective and a rational isomorphism. 
\end{proposition}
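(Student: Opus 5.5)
Since $\chi$ is a graded ring homomorphism (\cref{prop:charmap}), it suffices to work one degree at a time. The plan is to show that for each $m$ the restriction $\chi_m \colon P(G)_m \to \Barks(G)_m$ is represented, in suitable finite bases, by a square matrix that is triangular with nonzero diagonal entries. Such a matrix gives an injective map of free abelian groups of the same finite rank, and hence a rational isomorphism. Both $P(G)$ and $\Barks(G)$ are the direct sums of their homogeneous pieces, so the statement then follows degree by degree.

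First I will match up the bases. By \cref{basis_for_burnside_tambara_ring} together with \cref{dedekind_grps_character_iso_lemma}, a $\Z$-basis of $P(G)_m$ is indexed by pairs $(H,\{K_1,\dots,K_n\})$, where $H \subseteq G$ and the $K_i$ form a multiset of subgroups of $H$ with $\sum_i |H/K_i| = m$. For a Dedekind group conjugacy classes are just subgroups, so \cref{basis_for_barks} indexes the monomial basis of $\Barks(G)_m$ by exactly the same pairs, via $(H,\{K_i\}) \mapsto \prod_i x_{K_i}$ in the $H$-factor. In each degree $m$ both index sets are finite, since $n \le m$ and there are finitely many subgroups. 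So $P(G)_m$ and $\Barks(G)_m$ are free of the same finite rank, and the canonical bijection of bases gives a square matrix.

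Next I will use \cref{character_of_a_basis_element} to compute $\chi$ on the basis element $b_{(L,\{K_i\})} = [\coprod_i G/K_i \to G/L]$. At a subgroup $M$ its component vanishes unless $M$ is subconjugate to $L$, which here means $M \subseteq L$. When $M = L$ the component is $|W_G(L)| \prod_i x_{K_i}$. I will order the pairs first by the subgroup $H$, using any linear extension of the subgroup poset, and arbitrarily within a fixed $H$. In the $M$-factor, $\chi(b_{(L,\{K_i\})})$ can then be nonzero only for $M \le L$. Restricting to the block of the matrix where the source pairs have $L$ and the target pairs have $M = L$, the matrix is diagonal with entries $|W_G(L)| \neq 0$, because the coefficient of $b_{(L,\{K_i\})}$ lands exactly on the monomial indexed by $(L,\{K_i\})$. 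The whole matrix is therefore block triangular with diagonal blocks $|W_G(L)|\cdot \mathrm{Id}$. Its determinant is a nonzero integer, which gives injectivity and a rational isomorphism in each degree.

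The main obstacle is making sure the diagonal blocks really are diagonal. Within a fixed $L$-factor, distinct multisets $\{K_i\}$ must give distinct monomials, which relies on $\Tr^G_L$ being injective for Dedekind $G$ (stated earlier), so that $x_{K}$ for $K \subseteq L$ are distinct variables. Distinct basis elements must also correspond to distinct multisets, which is \cref{dedekind_grps_character_iso_lemma}. Once these are checked, the rest is bookkeeping.
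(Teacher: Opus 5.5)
Your proposal is correct and follows the paper's proof essentially step for step. Both arguments reduce to a fixed degree $m$, match the bases using \cref{dedekind_grps_character_iso_lemma} and \cref{basis_for_barks}, and order them by a linear extension of subgroup inclusion. Both then use the two special cases of \cref{character_of_a_basis_element}: the character vanishes in the $M$-factor when $M \not\subseteq L$, and for $M = L$ it equals $|W_G(L)|$ times the matching monomial. This gives a triangular matrix with nonzero diagonal; your block-triangular phrasing is the same thing, since the diagonal blocks are scalar.
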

\begin{proof}
    We check that this is true for the degree $m$ component of $\chi$ for each $m \in \N$. By \cref{dedekind_grps_character_iso_lemma,basis_for_barks}, $P(G)_m$ and $\Barks(G)_m$ are free abelian groups of the same rank. Explicitly, for $G$ Dedekind, there is a bijection $h$ from the basis of $P(G)_m$ to the basis of $\Barks(G)_m$ given by
    $$\left[\coprod_{i=1}^n G/K_i \xrightarrow{\psi} G/L\right] \mapsto \left(\prod_{i=1}^n x_{K_i}\text{ concentrated in the $L$-factor}\right).$$
    where $\sum_{i=1}^n [L:K_i] = m$. We choose an ordering on these bases and show that the associated matrix of $\chi_m$ is upper triangular with nonzero determinant. For the basis of $P(G)_m$, we choose an ordering $\prec$ in which, if $L$ is a proper subgroup of $L'$ and $\varphi$ and $\varphi'$ are bispans with codomains $G/L$ and $G/L'$ respectively, then $[\varphi] \prec [\varphi']$. We then use the bijection $h$ to transfer $\prec$ to an ordering on the basis of $\Barks(G)_m$.

    We claim that the matrix of $\chi_m$ with respect to these ordered bases is upper triangular, i.e. if $[\psi]$ is a basis element of $P(G)_m$ as above and $b$ is a basis element of $\Barks(G)_m$ with $[\psi] \prec h^{-1}(b)$, then the $([\psi], b)$-entry of the matrix is zero. If $[\psi] \prec h^{-1}b$, then either $b$ is a monomial concentrated in the $M$-factor with $M$ not contained in $L$, or $M = L$ with $h([\psi]) \neq b$. We now use the special cases noted at the end of \cref{character_of_a_basis_element}: if $M$ is not contained in $L$, then $\chi[\psi]_{M} = 0$, and if $M = L$, then $\chi[\psi]_{M}$ is a non-zero multiple of $h([\psi])$ and hence doesn't have $b$ as a summand. This also tells us that the $([\psi], h([\psi]))$-entry of the $\chi_m$ matrix is non-zero for all basis elements $[\psi]$ and hence all diagonal entries of this matrix are non-zero.
\end{proof}

Similarly to the table of marks, we can also describe the image of the marks homomorphism for $P(G)$ in a table of marks, where our rows are labeled by the generators of $P(G)$ as an $A(G)$-algebra and our columns are labeled by the generators of $\Barks(G)$.

\begin{example} \label{ex:cpcharacter}
    Using the generators described in \cref{big_burnside_ring_Cp_example_computation}, we get the following table of characters for $C_p$:
\begin{table}[H] 
\begin{center}
{\renewcommand{\arraystretch}{1.2}
\begin{tabular}{lcc}
         \toprule
         $ $
         &  $\Z[x_e]$ 
         & $\Z[x_e, x_{C_p}]$
         \\ \midrule
         $x =[C_p/C_p \to C_p/C_p]$ & $x_e$ & $x_{C_p}$  \\
			$n = [C_p/e\to C_p/C_p]$ & $x_e^p$ & $x_{e}$ 
        \\\bottomrule
\end{tabular} }
\end{center}
\end{table}
\end{example}

\begin{example} \label{ex:c4character}
    Using the generators described in \cref{big_burnside_ring_C4_example_computation}, we get the following table of characters for $C_4$:

    \begin{center}
		\begin{tabular}[H]{lccc}
            \toprule
			& $\Z[x_e]$ & $\Z[x_e,x_{C_2}]$ & $\Z[x_e,x_{C_2},x_{C_4}]$\\
			\midrule
            $s = [C_4/e \to C_4/C_2]$ & $2x_{e}^2$ & $2x_{e}$ & 0	\\
			$x = [C_4/C_4 \to C_4/C_4]$ & $x_e$ & $x_{C_2}$ & $x_{C_4}$ \\
			$m = [C_4/C_2\to C_4/C_4]$ & $x_e^2$ & $x_{C_2}^2$ & $x_{C_2}$ \\
			$n = [C_4/e \to C_4/C_4]$ & $x_e^4$ & $x_{e}^2$ &	$x_{e}$
            \\\bottomrule
		\end{tabular}
	\end{center}
One can check that this induces an isomorphism of rings between
\[
\Q \otimes A(C_4)[x,m,n,s]/(x^4v - nv, x^2u - mu,x^2v- mv,sv-2x^2v, su-2s, sm - sx^2, s^2-2un) 
\]
and $\Q \otimes \Barks(C_4)$, thereby verifying the presentation of \cref{big_burnside_ring_C4_example_computation}.
\end{example}

\subsection{Variants}

In this section we will discuss versions of $P(G)$ and $\Barks(G)$ depending on a choice of $G$-transfer system. We will also discuss an alternative construction of the character theory of the previous section making use of work of Thevenaz in \cite{Thevenaz1988}. The reader is free to skip this section as it is not critical to the rest of the paper. 

We recall from \cite[Def 3.4]{Rubin} the notion of a $G$-transfer system. 

\begin{definition}
Let $G$ be a finite group. A $G$-transfer system is a partial order $T$ on $\Sub(G)$ which refines the subset inclusion relation, is closed under conjugation, and is closed under intersections. The last condition means that if $K \leq H$ in $T$ and $J \subseteq H$ is a subgroup, then $K \cap J \leq J$ in $T$.
\end{definition}

For a $G$-transfer system $T$, let $P_T(G)$ be the Grothendieck ring of bispans of finite $G$-sets
\[
[X \xrightarrow{\varphi} Y]
\]
in which for all $x \in X$, the inclusion
\[
\Stab_G(x) \subseteq \Stab_G(\varphi(x)) 
\]
is in $T$. The properties of a $G$-transfer system ensure that products of maps of $G$-sets satisfying this condition still satisfy this condition. Note that $P_T(G) \subseteq P(G)$, with equality holding when $T$ is the complete transfer system.

Let $\Barks_T(G) = \prod_{[H]} \Z[I_{T,H}]$, where $I_{T,H} \subseteq A^+(G)$ is the subgroup generated by isomorphism classes of $G$-sets of the form $[G/K]$ for $K \subseteq H$ an inclusion in $T$. Since $I_{T,H} \subseteq I_H$, we may view $\Barks_T(G)$ as a subring of $\Barks(G)$. Following the proof of \cref{prop:charmap}, we have:

\begin{proposition}
The marks homomorphism for the Burnside--Tambara ring restricts to a map
\[
\chi \colon P_T(G) \to \Barks_T(G).
\]
\end{proposition}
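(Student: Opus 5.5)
The plan is to run the proof of \cref{prop:charmap} again with every occurrence of $\Z[I_H]$ replaced by $\Z[I_{T,H}]$. There are two things to check. First, the character of any bispan in $P_T(G)$ lands in $\Barks_T(G)$. Second, the ring-homomorphism property survives, and this is automatic: $\Barks_T(G)$ is a subring of $\Barks(G)$ and $P_T(G)$ is a subring of $P(G)$. So the only real content is the first containment.

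For that, we may reduce to effective bispans $[X \xrightarrow{\varphi} Y]$ satisfying the $T$-condition, since $P_T(G)$ is their Grothendieck group. Fix $H$ and $y \in Y^H$. Decompose the $H$-set $\varphi^{-1}(y)$ into $H$-orbits $H/\Stab_H(x) = H/(\Stab_G(x) \cap H)$, with $x$ ranging over orbit representatives. Because transfer preserves coproducts, $[\Tr_H^G \varphi^{-1}(y)]$ is the product in $\Z[I_H]$ of the monomial generators $[G/(\Stab_G(x)\cap H)]$. It therefore suffices to show that each $\Stab_G(x) \cap H \leq H$ lies in $T$.

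The argument for this goes as follows. Set $\varphi(x) = y$. By hypothesis, $\Stab_G(x) \leq \Stab_G(y)$ is in $T$. Since $y$ is $H$-fixed, we also have $H \subseteq \Stab_G(y)$. Now apply the restriction (intersection) axiom of a transfer system with $J = H$: it gives $\Stab_G(x)\cap H \leq H$ in $T$. Hence every generator that appears lies in $I_{T,H}$, and so $\chi_{[H]}$ lands in $\Z[I_{T,H}]$.

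One small point remains, namely that $I_{T,H}$ depends only on the conjugacy class of $H$. This follows because $T$ is closed under conjugation, and that is exactly what lets the target $\prod_{[H]}\Z[I_{T,H}]$ make sense. The main (mild) obstacle is this stabilizer bookkeeping: we must correctly identify the $H$-orbits of the fiber with $H/(\Stab_G(x)\cap H)$ and check that the transfer of such an orbit is $G/(\Stab_G(x)\cap H)$. The rest is the earlier proof verbatim.
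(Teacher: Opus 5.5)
Your proposal is correct and takes the same route as the paper, which simply says to follow the proof of \cref{prop:charmap}. The only real content is the containment $\chi(P_T(G)) \subseteq \Barks_T(G)$. Applying the restriction axiom with $J = H \subseteq \Stab_G(\varphi(x))$ to obtain $\Stab_G(x)\cap H \leq H$ in $T$ is exactly the detail the paper leaves implicit. Your side remarks are also right: the ring-map property is inherited from the subrings $P_T(G)\subseteq P(G)$ and $\Barks_T(G)\subseteq \Barks(G)$, and $I_{T,H}$ depends only on $[H]$ because $T$ is closed under conjugation.
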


We say that a subgroup $H \subseteq G$ is $G$-Dedekind if every subgroup of $H$ is normal in $G$. Since a $G$-Dedekind subgroup is normal and a subgroup of a $G$-Dedekind subgroup is also $G$-Dedekind, this notion gives rise to a transfer system. Let $T_{\lhd}$ be the transfer system containing $K \subseteq H$ if $H$ is $G$-Dedekind. 

\begin{example}
The $G$-Dedekind transfer system can be interesting. For instance, the metacyclic group $M_4(2)$ of order 16, which is a semidirect product of $C_2$ with $C_8$, has two distinct maximal proper $M_4(2)$-Dedekind subgroups.
\end{example}

Following the proof of \cref{character_map_rational_iso}, we have:
\begin{proposition}
Let $G$ be a finite group and let $T_{\lhd}$ be the $G$-Dedekind transfer system. The character map
\[
\chi \colon P_{T_{\lhd}}(G) \to \Barks_{T_{\lhd}}(G)
\]
is injective and rationally an isomorphism.
\end{proposition}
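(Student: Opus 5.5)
We follow the proof of \cref{character_map_rational_iso} one degree at a time. Both $P_{T_{\lhd}}(G)$ and $\Barks_{T_{\lhd}}(G)$ are graded, and $\chi$ is a graded ring homomorphism by \cref{prop:charmap}, restricted as in the previous proposition. So it suffices to show that for each $m$ the map $\chi_m \colon P_{T_{\lhd}}(G)_m \to \Barks_{T_{\lhd}}(G)_m$ between free abelian groups of finite rank has a square, triangular matrix with nonzero diagonal entries. Such a matrix gives injectivity and rational bijectivity.

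\emph{Step 1 (bases).} By \cref{basis_for_burnside_tambara_ring} and \cref{basis_elements_are_canonical_quotients}, an additive basis of $P_{T_{\lhd}}(G)_m$ consists of the bispans $[\coprod_i G/K_i \xrightarrow{\psi} G/L]$ with $K_i \subseteq L$, canonical quotient maps, every $K_i \subseteq L$ in $T_{\lhd}$, and $\sum_i [L:K_i] = m$. The stabilizer condition for the other points of an orbit is a conjugate of $K_i \subseteq L$, so it holds because $T_{\lhd}$ is closed under conjugation. By definition of $T_{\lhd}$, membership $K_i \subseteq L$ in $T_{\lhd}$ means either $K_i = L$ or $L$ is $G$-Dedekind. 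This splits into two cases.
\begin{itemize}
\item If $L$ is $G$-Dedekind, every $K_i$ is normal in $G$. Imitating \cref{dedekind_grps_character_iso_lemma}, any map $G/K_i \to G/L$, $eK_i \mapsto gL$, is isomorphic to the canonical quotient via the automorphism $eK_i \mapsto g^{-1}K_i$ of $G/K_i$, which exists since $N_G(K_i) = G$. Hence basis elements with target $G/L$ correspond to multisets $\{K_1,\dots,K_n\}$ of subgroups of $L$ with $\sum_i [L:K_i] = m$.
\item If $L$ is not $G$-Dedekind, only $K_i = L$ is allowed. Each component $G/L \to G/L$ is then an automorphism, so it is isomorphic to the identity, and there is exactly one basis element $[\coprod_{i=1}^m G/L \to G/L]$.
\end{itemize}
On the other side, $\Z[I_{T_{\lhd},H}]$ is the polynomial ring on $x_{[K]}$ for $K \subseteq H$ in $T_{\lhd}$. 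When $H$ is not $G$-Dedekind this is just $\Z[x_{[H]}]$, with $x_{[H]}$ of degree $1$. Matching $\prod_i x_{[K_i]}$ in the $[L]$-factor with the bispan indexed by $\{K_i\}$ gives a bijection $h$ of bases, as in \cref{basis_for_barks}. I expect Step 1 to be the main obstacle: the two cases must be handled carefully, and conjugacy classes must be shown to collapse correctly, in order to get equal ranks.

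\emph{Step 2 (triangularity).} Order the basis of $P_{T_{\lhd}}(G)_m$ so that bispans whose target $G/L$ has smaller $[L]$ in the subconjugacy order come first, and transport this ordering through $h$. By \cref{character_of_a_basis_element}, $\chi[\psi]_{[M]} = 0$ unless $[M] \le [L]$. When $[M] = [L]$ the same example gives
\[
\chi[\psi]_{[L]} = |W_G(L)| \prod_i x_{[K_i]} = |W_G(L)|\, h([\psi]).
\]
So the matrix of $\chi_m$ is triangular with diagonal entries $|W_G(L)| \neq 0$.

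\emph{Step 3 (conclusion).} A square triangular integer matrix with nonzero diagonal entries is injective and becomes invertible after tensoring with $\Q$. Applying this in each degree $m$ proves that $\chi \colon P_{T_{\lhd}}(G) \to \Barks_{T_{\lhd}}(G)$ is injective and a rational isomorphism.
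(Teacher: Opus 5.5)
Your proposal is correct and is essentially the paper's argument. The paper proves this proposition only by saying it follows the proof of \cref{character_map_rational_iso}, and your Step 1 supplies exactly the rank comparison that adaptation needs: for $G$-Dedekind $L$ the Dedekind-group counting goes through verbatim, while for non-$G$-Dedekind $L$ only the fold maps $\coprod_{i=1}^m G/L \to G/L$ survive and match $x_{[L]}^m$. The triangularity then comes from \cref{character_of_a_basis_element} as in the paper, with targets ordered by a linear extension of the subconjugacy order.
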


We now turn our attention to an alternative to the character map of \cref{prop:charmap}. Work of Thevenaz \cite{Thevenaz1988} studies the ring 
\[
\Twin_{P}(G) = \prod_{[H]} (P(H)/J_H)^{W_G(H)},
\]
where 
\[
J_H = \im \left(\bigoplus_{K \subset H} P(K) \xrightarrow{\sum_{K \subset H} \Tr_{K}^{H}} P(H) \right)
\]
and the Weyl group acts via conjugation. In \cite[Theorem 4.1 and Corollary 4.4]{Thevenaz1988}, Thevenaz proves a general result that implies that the map
\[
P(G) \to \Twin_P(G)
\]
given by restriction and taking the quotient induces a rational isomorphism
\[
\Q \otimes P(G) \to \Q \otimes \Twin_P(G).
\]
In this sense, Thevenaz's map is superior to the character map of \cref{character_formula}. The advantage of $\Barks(G)$ lies in the stellar algebraic properties it possesses. In fact, there is a close connection between Thevenaz's map and the character map of \cref{character_formula}.

\begin{proposition} \label{prop:thev}
There is an isomorphism of commutative rings
\[
P(H)/J_H \cong \Z[A^+(H)].
\]
\end{proposition}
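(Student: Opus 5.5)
The plan is to compare additive bases on both sides, using \cref{basis_for_burnside_tambara_ring} applied to the group $H$. That result says $P(H)$ is free abelian on the classes $[X \to H/L]$, where $X$ is a finite $H$-set and $L \subseteq H$. I will show that $J_H$ is exactly the span of those basis elements with $L$ a proper subgroup (up to conjugacy). The quotient is then free on the classes $[X \to H/H] = [X \to \ast]$. These multiply in the way that forces the monoid ring $\Z[A^+(H)]$.

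\emph{Step 1: identify $J_H$.} Here the transfer $\Tr_K^H \colon P(K) \to P(H)$ is induction, sending $[X \xrightarrow{\varphi} Y]$ to $[H\times_K X \to H \times_K Y]$. Applied to a basis element $[X \to K/L]$ of $P(K)$ with $K \subsetneq H$, it gives $[H\times_K X \to H/L]$ with $L \subseteq K$ proper in $H$. Hence $J_H$ lies in the span of basis elements with codomain $H/L$, $L \neq H$.

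Conversely, take any $[X \xrightarrow{\varphi} H/L]$ with $L$ proper. Set $F = \varphi^{-1}(eL)$, which is an $L$-set. The standard isomorphism $X \cong H \times_L F$ over $H/L$ shows
\[
[X \to H/L] = \Tr_L^H [F \to L/L],
\]
so this element lies in $J_H$. Therefore $J_H$ is spanned by this subset of the basis. Consequently $P(H)/J_H$ is free abelian on the images of $[X \to \ast]$, one for each class $[X] \in A^+(H)$.

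\emph{Step 2: $J_H$ is an ideal.} It suffices to check products of basis elements. By the product formula of \cref{proposition:semiring structure on bispans} (for $Y, Y'$ the point over the point), the codomain of $[X\to H/L]\cdot[X'\to Y']$ is $H/L \times Y'$. Every orbit of $H/L \times Y'$ has stabilizer contained in a conjugate of $L$, so it is a proper subgroup. By the orbit decomposition in the proof of \cref{basis_for_burnside_tambara_ring}, the product is a sum of basis elements with proper-orbit codomain, hence lies in $J_H$. (Alternatively one can cite Frobenius reciprocity for the Tambara functor $\underline{P}$.)

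\emph{Step 3: the ring map.} Define $\Z[A^+(H)] \to P(H)/J_H$ on the monoid basis by $[X] \mapsto [X \to \ast]$. The product formula gives
\[
[X\to\ast]\cdot[X'\to\ast] = [X \amalg X' \to \ast],
\]
and the unit $[\emptyset \to \ast]$ is the monoid identity $[\emptyset]$. So the map is multiplicative and unital, since addition in $A^+(H)$ becomes multiplication in the monoid ring. By Step 1 it carries a basis bijectively onto a basis, so it is an isomorphism.

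The main obstacle is Step 1's converse: the identification $X \cong H\times_L \varphi^{-1}(eL)$, and keeping track of which $L$ count as proper up to conjugacy. Everything else is bookkeeping with the canonical basis.
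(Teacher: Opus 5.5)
Your proof is correct, but it runs in the opposite direction from the paper's.

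The paper writes down the fixed-point map $\chi_H \colon P(H) \to \Z[A^+(H)]$, $[X \xrightarrow{\varphi} Y] \mapsto \sum_{y \in Y^H} [\varphi^{-1}(y)]$, which is a ring map by the computation of \cref{prop:charmap}. It is surjective since $[X \to \ast] \mapsto [X]$, and the paper identifies its kernel with $J_H$. One inclusion holds because transfers from proper subgroups have no $H$-fixed points in their codomain. Conversely, a kernel basis element $[X \to H/K]$ has $K$ proper; put in canonical-quotient form via \cref{basis_elements_are_canonical_quotients}, it is a transfer from $P^+(K)$.

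You instead map $\Z[A^+(H)] \to P(H)/J_H$ by $[X] \mapsto [X \to \ast]$, which is exactly the inverse of the paper's induced isomorphism. This costs you Step 2. In the paper, $J_H$ being an ideal is automatic because it is a kernel, while you must check it by hand; your stabilizer argument does this correctly. In return, the only multiplicative identity you need is $[X \to \ast]\cdot[X' \to \ast] = [X \amalg X' \to \ast]$, and you get a ring section $\Z[A^+(H)] \to P(H)$ of the quotient map.

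Your Step 1 also makes explicit something the paper uses tacitly: the kernel is spanned by the basis elements with proper-orbit codomain. Your identification $X \cong H \times_L \varphi^{-1}(eL)$ is a slightly more direct substitute for the canonical-quotient normal form. The paper's direction, for its part, gives the isomorphism by an explicit fixed-point formula. That is what makes its later comparison of Thevenaz's map with $\chi$ immediate.
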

\begin{proof}
Note that the map
\[
\chi_H \colon P(H) \to \Z[A^+(H)],
\]
induced by 
\[
\chi_H([X \xrightarrow{\phi} Y]) = \sum_{y \in Y^H} [\phi^{-1}(y)]
\]
is surjective, since every generator (elements $[X]$ of $A^+(H)$) is hit by a bispan of the form $[X \to *]$.

We will show that the kernel is $J_H$. Since all of the maps involved are additive, it suffices to prove this for $P^+(H)$. In \cref{ex:transfers}, we show that the transfer map $P^+(K) \to P^+(H)$ sends $[X \xrightarrow{\phi} Y]$ to $[\Tr_{K}^{H} X \xrightarrow{\Tr_{K}^{H} \phi} \Tr_{K}^{H} Y]$. We have that $J_H$ is contained in the kernel because the transfer of a bispan from a proper subgroup has no $H$-fixed points in its codomain. On the other hand, assume that $[X \to H/K]$ is in the kernel of $\chi_H$. Since $(H/K)^H = \emptyset$, we must have that $K$ is a proper subgroup of $H$. Now, by \cref{basis_elements_are_canonical_quotients}, $[X \to H/K] = \left[\coprod_{i=1}^n H/K_i \xrightarrow{\psi} H/K\right]$, where $\psi_i \colon H/K_i \to H/K$ is the canonical quotient. Since $K_i$ is a subgroup of $K$, which is a proper subgroup of $H$, this element is in the image of the transfer from $P^+(K)$.
\end{proof}

The ring of invariants $\Z[A^+(H)]^{W_G(H)}$ is rarely a polynomial ring. Let $S_H$ be the set of isomorphism classes of transitive $H$-sets and view this set as a $W_G(H)$-set. Decompose $S_H$ into transitive $W_G(H)$-sets $\coprod_i S_{H,i}$. The ring of invariants is a polynomial ring if the Cayley group homomorphism 
\[
W_G(H) \to \prod_i \Sigma_{|S_{H,i}|}
\]
is surjective. This and related properties could be interesting to pursue, but we will not do so here. 

Note that there is a canonical ring map
\[
\Twin_P(G) \to \Barks(G)
\]
given on each factor by the composite 
\[
(P(H)/J_H)^{W_G(H)} \cong \Z[A^+(H)]^{W_G(H)} \to \Z[A^+(H)] \to \Z[I_H], 
\]
making use of the isomorphism of \cref{prop:thev}. This ring map fits into a commutative triangle
\[
\xymatrix{P(G) \ar[r] \ar[rd]_-{\chi} &  \Twin_P(G) \ar[d] \\ & \Barks(G),}
\]
in which the horizontal map is Thevenaz's map. When $G$ is Dedekind, the $W_G(H)$-action on $\Z[A^+(H)]$ is trivial and Thevenaz's map agrees with \cref{character_formula}. This provides an alternative description of \cref{character_formula} and alternative proof, making use of the work of Thevenaz, of \cref{character_map_rational_iso}.

\section{Composition of Bispans}
\label{sec:Composition of Bispans}

In this section we focus on composition of bispans of $G$-sets. Since the effective Burnside--Tambara semiring $P^+(G)$ is the semiring of endomorphisms of $G/G$ in the category of bispans, composition gives a third operation that is distinct from multiplication and addition. To describe how this third operation interacts with the ring structure, we review composition in $\P^G$.

\subsection{The dependent product}

Composition of bispans is really a multistep process. It
starts with forming an exponential diagram. A key ingredient in these diagrams is the dependent product. 

\begin{definition}
    Let $f \colon X \to Y$ be a morphism of finite $G$-sets. 
    The dependent product functor along $f$
    \[
        \prod_f \colon \Fin^G_{/X} \to \Fin^G_{/Y}
    \]
    is the right adjoint to the pullback functor 
    \(
        f^* \colon \Fin^G_{/Y} \to \Fin^G_{/X}.
    \)
\end{definition}

More explicitly, given $\ell \colon A \to X$ in $\Fin^G_{/X}$, the dependent product of $\ell$ along $f \colon X \to Y$ can be modeled as $\prod_f A \xrightarrow{p} Y$, where 
\[
    \prod_f A \coloneqq 
        \big\{ 
            {(y, \sigma)} 
            \ \big\vert\  
            y \in Y,\ 
            \sigma \colon f^{-1}(y) \to A,\ 
            \ell \circ \sigma = \id_{f^{-1}(y)} 
        \big\} 
\]
and $p$ is the projection $(y, \sigma) \mapsto y$. 

The set $\prod_f A$ becomes a $G$-set via 
\[
    g \cdot (y, \sigma) = (gy, {}^g\!\sigma),
\]
where ${}^g\!\sigma$ is the conjugate of $\sigma$ by $g$: 
\[
    {}^g\!\sigma(x) = g\,\sigma(g^{-1}x).
\]
We think of the $G$-set $\prod_f A$ as a set of partial sections $\sigma$ of $\ell$, one over each fiber of $f$. 

\begin{example}
	If $Y = \ast$ is a singleton, then $\prod_f A$ is the set of sections of $\ell \colon A \to X$. 
\end{example}

\begin{example}
\label{dep_prod_along_id}
    If $f\colon X \to X$ is the identity map, then the dependent product $\prod_f A$ of $\ell: A\to X$ along $f$ is $\ell: A\to X$ itself. 
\end{example}

\begin{example}[{\cite[Proposition 2.3]{HM2019}}]
	Let $f \colon G/H \to \ast$ and $\ell \colon A \to G/H$. Then there is a canonical isomorphism of $G$-sets:
	\[
		\prod_f A \cong \Fin^H(G, \ell^{-1}(eH)). 
	\]	 
\end{example}

As stated previously, composing bispans also requires forming exponential diagrams. 

\begin{definition}
	Given morphisms $f \colon X \to Y$ and $\ell \colon A \to X$ in $\Fin^G$, the canonical exponential diagram generated by $f$ and $\ell$ is the commutative diagram 
	\[
		\begin{tikzcd}
			X 
				\ar[d, "f"]
				&
			A 
				\ar[l, "\ell"'] 
				& 
			X \times_Y \prod_f A 
				\ar[l, "e"']
				\ar[d, "\pi"]
				\ar[dll, phantom, "\llcorner" description, very near start]
				\\
			Y
				& 
				&
			\prod_f A,
				\ar[ll, "p"]
		\end{tikzcd}
	\]
	where $p$ is the projection $(y, \sigma) \mapsto y$, $e$ is the evaluation map $e(x,(y,\sigma)) = \sigma(x)$, and $\pi$ is projection onto the dependent product $\prod_f A$. Note that $\ell \circ e$ is projection onto $X$ and the outer rectangle of the diagram is a pullback. An exponential diagram is any diagram isomorphic to a canonical exponential diagram as above. 
\end{definition}

\subsection{Composition in the Burnside--Tambara Ring}
Here we define the composition product, and show that in the first coordinate the composition product has algebraic properties that we will make use of in our later calculation. 

Given bispans $[X \xleftarrow{} A \xrightarrow{} B \xrightarrow{} Y]$ and $[Y \xleftarrow{} C \xrightarrow{} D \xrightarrow{} Z]$, we may compose them by forming pullbacks and exponential diagrams as in \cref{bispan_composition}. In \cref{CompDiagram}, we specialize this to bispans with source and target a point.
\begin{figure}[H]
\begin{tikzpicture}
	\node (X) at (1,0) {$X$};
	\node (A) at (2,1) {$A$};
	\node (B) at (4,1) {$B$};
	\node (Y) at (5,0) {$Y$};
	\node (C) at (6,1) {$C$};
	\node (D) at (8,1) {$D$};
	\node (Z) at (9,0) {$Z$};
	\node (B') at (5,2) {$B'$}; 
	\node (A') at (3,2) {$A'$}; 
	\node (D') at (6,3) {$D'$}; 
	\node (C') at (5,3) {$C'$}; 
	\node (A'') at (4,3) {$A''$}; 
	\draw[->] (A) -- (X);
	\draw[->] (A) -- (B);
	\draw[->] (B) -- (Y);
	\draw[->] (C) -- (Y);
	\draw[->] (C) -- (D);
	\draw[->] (D) -- (Z);
	\draw[->] (B') -- (B);
	\draw[->] (B') -- (C);
	\draw[->] (A') -- (A);
	\draw[->] (A') -- (B');
	\draw[->] (A'') -- (A');
	\draw[->] (A'') -- (C');
	\draw[->] (C') -- (B');
	\draw[->] (C') -- (D');
	\draw[->] (D') -- (D);
	\draw[white] (B') --node[black]{\tiny(P1)} (Y);
	\draw[white] (A') --node[black]{\tiny(P2)} (B);
	\node at (4.2,2.5) {\tiny(P4)};
	\draw[white] (D') --node[black]{\tiny(E3)} (C);
	\draw[blue,line width=5mm, rounded corners, line cap=round, opacity=0.2] 
		(1,0) -- (2,1) -- (4,1) -- (5,0);
	\draw[red,line width=5mm, rounded corners, line cap=round, opacity=0.2] 
		(5,0) -- (6,1) -- (8,1) -- (9,0);
	\draw[green,line width=5mm, rounded corners, line cap=round, opacity=0.2] 
		(1,0) -- (4,3) -- (6,3) -- (9,0);
\end{tikzpicture}
\caption{Composition of bispans. Given bispans $[X \xleftarrow{} A \xrightarrow{} B \xrightarrow{} Y]$ (highlighted in blue) and $[Y \xleftarrow{} C \xrightarrow{} D \xrightarrow{} Z]$ (highlighted in red), we form the composite bispan 
\(
	[Y \xleftarrow{} C \xrightarrow{} D \xrightarrow{} Z] 
		\circ 
	[X \xleftarrow{} A \xrightarrow{} B \xrightarrow{} Y]
\)
by first taking pullbacks (P1) and then (P2), then forming an exponential diagram (E3), then finally forming the pullback (P4). The resulting bispan $[X \leftarrow A'' \rightarrow D' \rightarrow Z]$  (highlighted in green) is the composite.}
\label{bispan_composition}
\end{figure}

\begin{figure}[H]
\label{composition_product}
\begin{tikzpicture}
	\node (*1) at (0,0) {$\ast$};
	\node (A) at (2,1) {$A$};
	\node (B) at (4,1) {$B$};
	\node (*2) at (6,0) {$\ast$};
	\node (X) at (8,1) {$X$};
	\node (Y) at (10,1) {$Y$};
	\node (*3) at (12,0) {$\ast$};
	\node (BxX) at (6,2) {$B \times X$};
	\node (dp) at (10,3) {$\prod_\psi (B \times X)$};
	\node (pb1) at (6,3) {$X \times_Y \prod_\psi(B \times X)$};
	\node (pb2) at (2,3) {$A \times X \times_Y \prod_\psi(B \times X)$};
	\draw[->] (A) -- (*1);
	\draw[->] (A) --node[fill=white]{$\phi$} (B);
	\draw[->] (X) --node[fill=white]{$\psi$} (Y);
	\draw[->] (B) -- (*2);
	\draw[->] (X) -- (*2);
	\draw[->] (Y) -- (*3);
	\draw[->] (BxX) -- (B);
	\draw[->] (BxX) -- (X);
	\draw[->] (pb1) -- (BxX);
	\draw[->] (pb1) -- (dp);
	\draw[->] (dp) -- (Y);
	\draw[->] (pb2) -- (pb1);
	\draw[->] (pb2) -- (A);
	\draw[->] (pb2) -- (*1);
	\draw[->] (dp) -- (*3);	
	\draw[blue,line width=5mm, rounded corners, line cap=round, opacity=0.2] 
		(0,0) -- (2,1) -- (4,1) -- (6,0);
	\draw[red, line width=5mm, rounded corners, line cap=round, opacity=0.2]
		(6,0) -- (8,1) -- (10,1) -- (12,0);
	\draw[green,line width=5mm, rounded corners, line cap=round, opacity=0.2]
		(0,0) -- (2,3) -- (10,3) -- (12,0);
\end{tikzpicture}
\caption{Composition of bispans from the point to the point. The bispan $[A \xrightarrow{\varphi} B]$ is in blue, the bispan $[X \xrightarrow{\psi} Y]$ is in red, and their composite $[X \xrightarrow{\psi} Y] \circ [A \xrightarrow{\varphi} B]$ is in green.}
\label{CompDiagram}
\end{figure}

\begin{example}
    Consider the subgroup $P(G)_1 \subseteq P(G)$. There is an isomorphism of abelian groups $A(G) \cong P(G)_1$ given by sending $[X]$ to $[X \xrightarrow{\id_X} X]$. Since pullbacks and dependent products along identity maps are identity functors  (\cref{dep_prod_along_id}), we see that the composition of two such bispans reduces to the composition of spans and hence corresponds to the ordinary product on $A(G)$.
\end{example}

\begin{example} \label{ex:transfers}
Let $H \subseteq G$ and let $T_{H} = [G/H \leftarrow G/H \rightarrow G/H \rightarrow G/G]$. The transfer map $P(H) \to P(G)$ is the additive map induced by the composite
\[
P^+(H) \cong \P^G(\ast, G/H) \xrightarrow{T_{H} \circ (-)} \P^G(\ast, \ast) \cong P^+(G).
\]
The first isomorphism takes $[X \xrightarrow{\phi} Y] \in P^+(H)$ to the bispan
\[
[\ast \leftarrow G \times_H X \xrightarrow{G \times_H \phi} G \times_H Y \to G/H].
\]
Following \cref{bispan_composition} and making use of \cref{dep_prod_along_id}, composing this bispan with $T_H$ yields
\[
[\ast \leftarrow G \times_H X \xrightarrow{G \times_H \phi} G \times_H Y \to \ast] \in P^+(G). 
\]
Thus the transfer map $P(H) \to P(G)$ sends $[X \xrightarrow{\phi} Y]$ to $[\Tr_{H}^{G} X \xrightarrow{\Tr_{H}^{G}(\phi)} \Tr_{H}^{G}Y]$.
\end{example}

The category $\P^G$ has all finite products. 
\begin{proposition}[cf.~{\cite[proof of Proposition 2.12]{BH2018}}]
\label{products_in_P^G}
    Let $X_1,\dots, X_n \in \P^G$. Then their product is $\coprod_{i=1}^n X_i$, the coproduct of $G$-sets, with projection maps given by
    $$\coprod_{i=1}^n X_i \xleftarrow{} X_i \xrightarrow{\id} X_i \xrightarrow{\id} X_i,$$
    where the backwards arrow is the inclusion into the coproduct.
\end{proposition}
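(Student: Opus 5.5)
To prove that $\coprod_{i=1}^n X_i$, with the stated projections, is the product in $\P^G$, I would verify the universal property directly. Write $X = \coprod_i X_i$, let $\iota_i \colon X_i \to X$ be the inclusions, and let $\pi_i = [X \xleftarrow{\iota_i} X_i \xrightarrow{\id} X_i \xrightarrow{\id} X_i]$. The claim is that for every finite $G$-set $W$ the map
\[
\P^G(W, X) \to \prod_{i=1}^n \P^G(W, X_i), \qquad \Sigma \mapsto (\pi_i \circ \Sigma)_i,
\]
is a bijection. Since $\P^G(W,-)$ already has the structure described in \cref{proposition:semiring structure on bispans}, it suffices to compute these composites explicitly and exhibit an inverse.

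The first step is to compute $\pi_i \circ \Sigma$ for $\Sigma = [W \xleftarrow{\varphi} A \xrightarrow{\psi} B \xrightarrow{\omega} X]$, following \cref{bispan_composition}. The pullback (P1) of $\iota_i$ along $\omega$ is $B_i := \omega^{-1}(X_i)$, and (P2) gives $A_i := \psi^{-1}(B_i)$. Because the middle map of $\pi_i$ is the identity, the exponential diagram (E3) is trivial by \cref{dep_prod_along_id}. The final pullback (P4) is then also trivial. Hence
\[
\pi_i \circ \Sigma = [W \xleftarrow{\varphi|} A_i \xrightarrow{\psi|} B_i \xrightarrow{\omega|} X_i].
\]
Since $B = \coprod_i B_i$ and $A = \coprod_i A_i$, this restriction is compatible with isomorphisms of bispans, so the map above is well defined.

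The second step is the inverse. Given bispans $\Sigma_i = [W \leftarrow A_i \to B_i \to X_i]$, define
\[
\Sigma = [W \leftarrow \textstyle\coprod_i A_i \to \coprod_i B_i \to \coprod_i X_i].
\]
By the first step, $\pi_i \circ \Sigma = \Sigma_i$. Conversely, any $\Sigma$ is recovered from its restrictions by the decomposition $B = \coprod_i \omega^{-1}(X_i)$. Both directions are compatible with isomorphisms of bispans: an isomorphism of the tuples $(\Sigma_i)$ assembles into an isomorphism of the coproduct bispans, and an isomorphism $(\alpha, \beta)$ between two $\Sigma$'s over $X$ preserves the fibers over each $X_i$, so it restricts to isomorphisms of the $\Sigma_i$.

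The only point needing care is the first step: one must check that composition with a bispan whose forward maps are identities is computed by plain pullback, that is, that the exponential diagram along an identity map is itself an identity. This follows from \cref{dep_prod_along_id}, since $\prod_{\id} C = C$ and the evaluation map is then the identity. Everything else is bookkeeping of coproduct decompositions, which works because pullbacks in $\Fin^G$ commute with coproducts.
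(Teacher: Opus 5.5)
Your proposal is correct and follows the paper's proof: both verify the universal property by showing that composing with the $i$th projection restricts a bispan to the part over $X_i$, namely $A_i=(\omega\psi)^{-1}(X_i)$ and $B_i=\omega^{-1}(X_i)$, and both use the coproduct of the $A_i$ and $B_i$ as the inverse. The only difference is that you justify the composite via the trivial exponential diagram along the identity and check compatibility with isomorphisms of bispans, whereas the paper simply states the composite.
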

\begin{proof}
    We check the universal property of the product by verifying that postcomposing with the claimed projections induces an isomorphism $\P^G\left(Y, \coprod_{i=1}^n X_i\right) \cong \prod_{i=1}^n\P^G(Y, X_i)$. Given a bispan
    $$\left[Y \xleftarrow{\alpha} A \xrightarrow{\beta} B \xrightarrow{\gamma} \coprod_{i=1}^n X_i\right],$$
    its composite with the $i$th projection is
    $$[Y \xleftarrow{\alpha} (\gamma\beta)^{-1}(X_i) \xrightarrow{\beta} \gamma^{-1}(X_i) \xrightarrow{\gamma} X_i].$$
    This has an inverse which takes a collection of bispans
    $$\{[Y \xleftarrow{\alpha_i} A_i \xrightarrow{\beta_i} B_i \xrightarrow{\gamma_i} X_i]\}$$
    to the bispan
    $$\left[Y \xleftarrow{\sum_i \alpha_i} \coprod_{i=1}^n A_i \xrightarrow{\coprod_{i=1}^n \beta_i} \coprod_{i=1}^n B_i \xrightarrow{\coprod_{i=1}^n \gamma_i} \coprod_{i=1}^n X_i\right].$$
\end{proof}

We can use composition of bispans to understand the semiring structure on $\P^G(X,Y)$ described in \cref{proposition:semiring structure on bispans}. This comes from the semiring structure on the object $Y\in \P^G$ in which the addition and multiplication structure maps are given by
$$[Y\amalg Y\xleftarrow{\id} Y\amalg Y \xrightarrow{\id} Y\amalg Y \xrightarrow{\nabla} Y]$$
and
$$[Y\amalg Y\xleftarrow{\id} Y\amalg Y \xrightarrow{\nabla} Y \xrightarrow{\id} Y]$$
respectively. Specializing to the case $X=Y=*$, we get a semiring structure on $P^+(G)$, which agrees with the one introduced in \cref{proposition:semiring structure on bispans}.

For any two bispans $[X_1 \xrightarrow[]{\psi_1} Y_1], [X_2 \xrightarrow[]{\psi_2} Y_2] \in P(G)$, denote by $([\psi_1], [\psi_2])$ the bispan
    $$\left[*\xleftarrow{!} X_1 \amalg X_2 \xrightarrow{\psi_1\amalg \psi_2} Y_1 \amalg Y_2 \xrightarrow{!\amalg !} *\amalg *\right].$$
This notation is justified by \cref{products_in_P^G}, which says that $\ast\amalg\ast$ is the categorical product in $\P^G$, and the fact that postcomposing this bispan with the two projections of this product yields $[\psi_1]$ and $[\psi_2]$ respectively, so those are the ``coordinate functions'' of $([\psi_1],[\psi_2])$. In particular, for any $[\varphi]\in P(G)$, we have
$$([\psi_1],[\psi_2]) \circ [\varphi] = ([\psi_1] \circ [\varphi],[\psi_2]\circ [\varphi]),$$
as this is true of maps into any categorical product.

\begin{proposition}\label{ring_from_composition}
    Let $[X_1 \xrightarrow[]{\psi_1} Y_1], [X_2 \xrightarrow[]{\psi_2} Y_2] \in P(G)$ and let $([\psi_1], [\psi_2])$ be as above.
    \begin{enumerate}[(a)]
        \item\label{ring_from_composition a} Let $T_\nabla = [*\amalg * \xleftarrow{\id} *\amalg * \xrightarrow{\id} *\amalg * \xrightarrow{\nabla} *]$. Then
        $$[\psi_1]+[\psi_2] = T_\nabla\circ ([\psi_1], [\psi_2]).$$
        \item Let $N_\nabla = [*\amalg * \xleftarrow{\id} *\amalg * \xrightarrow{\nabla} * \xrightarrow{\id} *]$. Then
        $$[\psi_1][\psi_2] = N_\nabla \circ ([\psi_1], [\psi_2]).$$
        \item\label{ring_from_composition c} Let $[\emptyset \xrightarrow{\iota} Y] \in P(G)$ be an element of $A(G)$ viewed as a bispan. Then
        $$[\iota]\circ [\psi_1] = [\iota].$$
    \end{enumerate}
\end{proposition}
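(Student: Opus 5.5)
The plan is to use the composition formula of \cref{CompDiagram} together with the bispan descriptions of $+$ and $\cdot$ from \cref{products_in_P^G} and the discussion just before the statement. Parts (a) and (b) are computations of a composite in which the first bispan has the special form $*\amalg * \leftarrow \cdots$. Part (c) is a degenerate exponential-diagram computation.

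For (a) and (b), I would compose
\[
([\psi_1],[\psi_2]) = \left[* \xleftarrow{!} X_1\amalg X_2 \xrightarrow{\psi_1\amalg\psi_2} Y_1\amalg Y_2 \xrightarrow{!\amalg !} *\amalg *\right]
\]
with $T_\nabla$ or $N_\nabla$ by following \cref{bispan_composition}.

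\textbf{Case (a).} For $T_\nabla$ the map $C \to D$ is $\id_{*\amalg *}$.
\begin{itemize}
\item Pullback (P1) gives $B' = Y_1\amalg Y_2$.
\item Pullback (P2) gives $A' = X_1\amalg X_2$.
\item By \cref{dep_prod_along_id}, the exponential diagram (E3) along an identity is trivial: $D' = C' = B' = Y_1 \amalg Y_2$.
\item Hence $A'' = X_1\amalg X_2$, with $D' \to *$ the unique map.
\end{itemize}
The composite is therefore $[X_1\amalg X_2 \to Y_1\amalg Y_2]$, which is $[\psi_1]+[\psi_2]$.

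\textbf{Case (b).} For $N_\nabla$, (P1) and (P2) are the same as in (a). The exponential diagram is now along the fold map $\nabla\colon *\amalg * \to *$ applied to $\ell\colon Y_1\amalg Y_2 \to *\amalg *$. A section over the single fiber $*\amalg *$ is a choice of a point of $Y_1$ and a point of $Y_2$, so $\prod_\nabla(Y_1\amalg Y_2) \cong Y_1\times Y_2$ with the diagonal $G$-action. The pullback $C' = (*\amalg *)\times (Y_1\times Y_2) = (Y_1\times Y_2)\amalg(Y_1\times Y_2)$, and the evaluation map $e$ projects the first copy to $Y_1$ and the second to $Y_2$. Pulling back (P4) along $X_1\amalg X_2 \to Y_1\amalg Y_2$ gives $A'' = (X_1\times Y_2)\amalg (Y_1\times X_2)$. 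Its map to $D' = Y_1\times Y_2$ is $(\psi_1\times\id)+(\id\times\psi_2)$, which is exactly the product formula.

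For (c), compose $[\emptyset\to Y]$ after $[X_1\to Y_1]$, i.e. with $C=\emptyset$. Then (P1) gives $B'=\emptyset$, so $A'=\emptyset$. The exponential diagram along $f\colon\emptyset\to *$ has $\prod_f \emptyset$ equal to the set of sections over the empty fiber. This is $\emptyset\to Y$ fiberwise over $D=Y$; precisely, $\prod_{C\to D}B' \cong D$ because every fiber of $C\to D$ is empty. Then $C' = C\times_D D' = \emptyset$, so $A''=\emptyset$, and the composite is $[\emptyset\to Y]$.

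These steps are routine. The main point needing care is identifying the dependent product along $\nabla$ and the evaluation map in (b), including checking equivariance, so that $A''$ matches the stated product formula up to the isomorphism swapping $X_2\times Y_1$ and $Y_1\times X_2$.
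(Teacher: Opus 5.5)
Your proposal is correct and follows essentially the same route as the paper: compute each composite directly through the pullback, exponential-diagram, pullback steps of \cref{bispan_composition}. The dependent product is trivial along the identity in (a), equals $Y_1\times Y_2$ along $\nabla$ in (b), and is $\id_Y$ along $\emptyset\to Y$ in (c). One small slip in (c): the map you take the dependent product along is $\emptyset\to Y$, not $\emptyset\to *$, as your very next clause already says.
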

\begin{proof}
    \begin{enumerate}[(a)]
        \item The composite is computed via the following diagram.
            \begin{center}
            \begin{tikzpicture}
            	\node (*1) at (-1,0) {$\ast$};
            	\node (A) at (1,1) {$X_1\amalg X_2$};
            	\node (B) at (4,1) {$Y_1\amalg Y_2$};
            	\node (*2) at (6,0) {$*\amalg *$};
            	\node (X) at (8,1) {$*\amalg *$};
            	\node (Y) at (10,1) {$*\amalg *$};
            	\node (*3) at (12,0) {$\ast$};
            	\node (BxX) at (6,2) {$Y_1 \amalg Y_2$};
            	\node (dp) at (10,3) {$Y_1 \amalg Y_2$};
            	\node (pb1) at (6,3) {$Y_1 \amalg Y_2$};
            	\node (pb2) at (1,3) {$X_1 \amalg X_2$};
            	\draw[->] (A) --node[above, font=\scriptsize]{$!$} (*1);
            	\draw[->] (A) --node[above, font=\scriptsize]{$\psi_1\amalg\psi_2$} (B);
            	\draw[->] (X) --node[above, font=\scriptsize]{$\id$} (Y);
            	\draw[->] (B) --node[below, font=\scriptsize]{$!\,\amalg\,!$} (*2);
            	\draw[->] (X) --node[below, font=\scriptsize]{$\id$} (*2);
            	\draw[->] (Y) --node[above, font=\scriptsize]{$!$} (*3);
            	\draw[->] (BxX) --node[above, font=\scriptsize]{$\id$} (B);
            	\draw[->] (BxX) --node[above, font=\scriptsize]{$!\,\amalg\,!$} (X);
            	\draw[->] (pb1) --node[right, font=\scriptsize]{$\id$} (BxX);
            	\draw[->] (pb1) --node[below, font=\scriptsize]{$\id$} (dp);
            	\draw[->] (dp) --node[left, font=\scriptsize]{$!\,\amalg\,!$} (Y);
            	\draw[->] (pb2) --node[below, font=\scriptsize]{$\psi_1\amalg\psi_2$} (pb1);
            	\draw[->] (pb2) --node[right, font=\scriptsize]{$\id$} (A);
            	\draw[->] (pb2) --node[above, font=\scriptsize]{$!$} (*1);
            	\draw[->] (dp) --node[above, font=\scriptsize]{$!$} (*3);	
            \end{tikzpicture}
            \end{center}
            We see that it is precisely $[\psi_1]+[\psi_2]$.
        \item Similarly, $N_\nabla\circ ([\psi_1], [\psi_2])$ is computed by the following diagram.
            \begin{center}
            \begin{tikzpicture}
            	\node (*1) at (-1.5,0) {$\ast$};
            	\node (A) at (0.5,1) {$X_1\amalg X_2$};
            	\node (B) at (4,1) {$Y_1\amalg Y_2$};
            	\node (*2) at (6,0) {$\ast\amalg \ast$};
            	\node (X) at (8,1) {$\ast\amalg \ast$};
            	\node (Y) at (10,1) {$\ast$};
            	\node (*3) at (12,0) {$\ast$};
            	\node (BxX) at (6,2) {$Y_1 \amalg Y_2$};
            	\node (dp) at (10,3) {$Y_1 \times Y_2$};
            	\node (pb1) at (6,3) {$Y_1 \times Y_2 \amalg Y_1 \times Y_2$};
            	\node (pb2) at (0.5,3) {$X_1 \times Y_2 \amalg Y_1 \times X_2$};
            	\draw[->] (A) --node[above, font=\scriptsize]{$!$} (*1);
            	\draw[->] (A) --node[above, font=\scriptsize]{$\psi_1\amalg\psi_2$} (B);
            	\draw[->] (X) --node[above, font=\scriptsize]{$!$} (Y);
            	\draw[->] (B) --node[below, font=\scriptsize]{$!\,\amalg\, !$} (*2);
            	\draw[->] (X) --node[below, font=\scriptsize]{$\id$} (*2);
            	\draw[->] (Y) --node[above, font=\scriptsize]{$!$} (*3);
            	\draw[->] (BxX) --node[above, font=\scriptsize]{$\id$} (B);
            	\draw[->] (BxX) --node[above, font=\scriptsize]{$!\,\amalg\, !$} (X);
            	\draw[->] (pb1) --node[right, font=\scriptsize]{$\pi_1\amalg\pi_2$} (BxX);
            	\draw[->] (pb1) --node[below, font=\scriptsize]{$\id+\id$} (dp);
            	\draw[->] (dp) --node[left, font=\scriptsize]{$!$} (Y);
            	\draw[->] (pb2) --node[below, font=\scriptsize]{$\psi_1\times\id \amalg \id\times\psi_2$} (pb1);
            	\draw[->] (pb2) --node[right, font=\scriptsize]{$\pi_1\amalg\pi_2$} (A);
            	\draw[->] (pb2) --node[left, font=\scriptsize]{$!$} (*1);
            	\draw[->] (dp) --node[right, font=\scriptsize]{$!$} (*3);	
            \end{tikzpicture}
            \end{center}
        \item This follows from the fact that the dependent product of $\emptyset\to\emptyset$ along $\emptyset \to Y$ is the identity map $Y\to Y$.
    \end{enumerate}
\end{proof}
Specializing to bispans whose domain and codomain are both points, we get a composition 
\[
\circ \colon P^{+}(G) \times P^{+}(G) \to P^{+}(G).
\]
\begin{corollary}
\label{algebra_map_in_first_coord}
    Let $[\varphi] \in P^+(G)$. Then
    $$(-)\circ [\varphi]: P^+(G) \to P^+(G)$$
    is an $A(G)$-algebra map.
\end{corollary}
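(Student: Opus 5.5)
The plan is to deduce everything from \cref{ring_from_composition} together with associativity of composition in the category $\P^G$. We must show three things: that $(-)\circ[\varphi]$ is additive, that it is multiplicative and preserves $0$ and $1$, and that it is $A(G)$-linear, i.e.\ fixes the image of $A(G)$.

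\textbf{Additivity.} Given $[\psi_1],[\psi_2]\in P^+(G)$, use \cref{ring_from_composition}(a) to write $[\psi_1]+[\psi_2]=T_\nabla\circ([\psi_1],[\psi_2])$. Then
\[
([\psi_1]+[\psi_2])\circ[\varphi] = T_\nabla\circ\bigl(([\psi_1],[\psi_2])\circ[\varphi]\bigr) = T_\nabla\circ([\psi_1]\circ[\varphi],[\psi_2]\circ[\varphi]).
\]
The first equality is associativity of composition in $\P^G$. The second is the identity $([\psi_1],[\psi_2])\circ[\varphi]=([\psi_1]\circ[\varphi],[\psi_2]\circ[\varphi])$ noted just before \cref{ring_from_composition}, which comes from the universal property of the product $\ast\amalg\ast$ in \cref{products_in_P^G}. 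Applying \cref{ring_from_composition}(a) again, the right-hand side equals $[\psi_1]\circ[\varphi]+[\psi_2]\circ[\varphi]$.

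\textbf{Multiplicativity.} The identical argument with $N_\nabla$ in place of $T_\nabla$, using \cref{ring_from_composition}(b), gives $([\psi_1][\psi_2])\circ[\varphi]=([\psi_1]\circ[\varphi])([\psi_2]\circ[\varphi])$.

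\textbf{Units and $A(G)$-linearity.} The elements $0=[\emptyset\to\emptyset]$ and $1=[\emptyset\to\ast]$ both lie in the image of $A(G)$. More generally, \cref{ring_from_composition}(c) gives $[\iota]\circ[\varphi]=[\iota]$ for every $[\emptyset\xrightarrow{\iota}Y]$. So $(-)\circ[\varphi]$ fixes $A(G)$ pointwise, and in particular preserves $0$ and $1$. Combined with multiplicativity, this gives
\[
([\iota][\psi])\circ[\varphi]=[\iota]\,([\psi]\circ[\varphi]),
\]
which is $A(G)$-linearity. If the statement is meant on $P(G)$ rather than $P^+(G)$, the additive map extends uniquely to the Grothendieck group.

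\textbf{Main point to check.} The step that needs care is the use of associativity in $\P^G$ together with the universal property of $\ast\amalg\ast$. Both are available: associativity is Tambara's \cite{Tambara1993}, and the universal property is \cref{products_in_P^G}. It remains to confirm that \cref{ring_from_composition}(c) applies verbatim with $[\varphi]$ in place of $[\psi_1]$, which it does since that item holds for arbitrary bispans.
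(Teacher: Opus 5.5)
Your proposal is correct and matches the paper's proof step for step. You get additivity and multiplicativity from $T_\nabla$ and $N_\nabla$ in \cref{ring_from_composition}, together with associativity in $\P^G$ and the product property of $\ast\amalg\ast$, and you get fixing of scalars from \cref{ring_from_composition}(c); your explicit remarks on $0$, $1$, and $A(G)$-linearity just spell out what the paper leaves implicit.
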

\begin{proof}
First, we check additivity. Let $[X_1 \xrightarrow{\psi_1} Y_1], [X_2 \xrightarrow{\psi_2} Y_2] \in P^+(G)$. By \cref{ring_from_composition}\eqref{ring_from_composition a}, we have
$$[\psi_1]+[\psi_2] = T_{\nabla} \circ ([\psi_1], [\psi_2]).$$
This lets us write
\begin{align*}
    ([\psi_1]+[\psi_2])\circ [\varphi] &= (T_\nabla\circ([\psi_1],[\psi_2])) \circ [\varphi]\\
    &= T_\nabla\circ(([\psi_1],[\psi_2]) \circ [\varphi])\\
    &= T_\nabla\circ([\psi_1]\circ[\varphi], [\psi_2]\circ [\varphi]) \\
    &= [\psi_1]\circ[\varphi] + [\psi_2]\circ[\varphi].
\end{align*}

The proof of multiplicativity can be obtained from the above by replacing all occurrences of addition with multiplication and $T_\nabla$ with $N_\nabla$. Lastly, we need to check that $(-) \circ [\varphi]$ fixes the scalars $A(G) \subseteq P(G)$. This is precisely \cref{ring_from_composition}\eqref{ring_from_composition c}.
\end{proof}

In particular, $(-) \circ [\varphi]$ extends to an $A(G)$-algebra map
\[
(-) \circ [\varphi] \colon P(G) \to P(G).
\]

\section{Plethories from Posets}

\subsection{Plethories}

The polynomial ring $\Z[x]$ admits much more structure than that of a commutative ring. In addition to the operations of addition and multiplication, there are the cooperations of coaddition $f \mapsto 1 \otimes f + f \otimes 1$ and comultiplication determined by $x \mapsto  x \otimes x$, as well as the composition of polynomials $(f,g) \mapsto f \circ g$. A plethory is an algebraic structure that codifies these operations and cooperations (see \cite{BorgerWieland2005}).

Let $R$ be a commutative ring and let $\Alg_R$ be the category of commutative $R$-algebras and $R$-algebra homomorphisms. An $R$-plethory is a comonad on $\Alg_R$ with the property that postcomposition with the forgetful functor to $\Set$ is corepresentable.

In more detail, assume $\Phi \colon \Alg_R \to \Alg_R$ is a lift of the functor $\Alg_R(B,-) \colon \Alg_R \to \Set$ to $\Alg_R$. This is the data of an $R$-biring structure on $B$, meaning that $B$ is equipped with the structure of an $R$-coalgebra in the category of commutative $R$-algebras (i.e. a coaddition $\Delta^+ \colon B \to B \otimes_R B$, comultiplication $\Delta^{\times} \colon B \to B \otimes_R B$, and co-$R$-linear structure $\beta \colon R \to \Alg_R(B,R)$, with an additive counit and multiplicative counit that satisfy the opposite of the axioms for a commutative $R$-algebra). Now assume that $\Phi_1$ and $\Phi_2$ are lifts of $\Alg_R(B_1,-)$ and $\Alg_R(B_2,-)$, respectively. It follows from \cite{TallWraith1970} that there is an $R$-biring $B_1 \odot_R B_2$ such that $\Phi_2 \circ \Phi_1$ is isomorphic to $\Alg_R(B_1 \odot_R B_2,-)$.

An $R$-plethory structure on $\Phi$ is the data of natural transformations $\Phi \to \Phi \circ \Phi$ and $\Phi \to 1$ making $\Phi$ into a comonad. This is equivalent to maps of $R$-birings $\circ \colon B \odot_R B \to B$ and $R[x] \to B$ making $B$ into a monoid for the $\odot_R$-product. A map of $R$-birings $\circ \colon B_1 \odot_R B_2 \to B_3$ is equivalent to a map of sets $\circ \colon B_1 \times B_2 \to B_3$ satisfying the relations in \cite[Section 1.3]{BorgerWieland2005}.

\subsection{Plethories from Posets}

\begin{definition}
\label{barks_definition_for_a_poset}
    Let $P$ be a finite poset. Define $\Marks(P)$ to be the ring $\prod_{p\in P} \Z$ and $\Barks(P)$ to be the product of polynomial rings $\prod_{p \in P} \Z[x_q \mid q \leq p]$.
\end{definition}
Note that $\Barks(P)$ is a $\Marks(P)$-algebra via the factor-wise inclusions $\Z \hookrightarrow \Z[x_q \mid q \leq p]$.

\begin{remark}
    We will care about the case $P = \Sub(G)/G$ for a finite group $G$, where the partial order is given by the subconjugacy relation. In the case where $G$ is the trivial group, $P$ is the singleton poset, $\Marks(P) = \Z$, and $\Barks(P) \cong \Z[x]$.
\end{remark}

Note that an algebra $R$ over $\Marks(P)$ canonically splits into a product $\prod_{p \in P} R_p$. A map of $\Marks(P)$-algebras is a product of maps of rings.

\begin{notation} \label{concentration notation}
    Let $R = \prod_{p \in P} R_p$ be a $\Marks(P)$-algebra. Given an element $f \in R$, we write $f_p$ for the $p^{th}$ coordinate of $f$. Given an element $a \in R_p$, we write $a^p$ for the element of $R$ whose $p$th factor is $a$ and all other factors are $0$.
\end{notation}

We will endow $\Barks(P)$ with the extra structure of a $\Marks(P)$-plethory. To do this, consider the functor $C \coloneqq \Alg_{\Marks(P)}(\Barks(P), -)$. Since each factor of $\Barks(P)$ is a polynomial ring, the set of $\Marks(P)$-algebra maps out of it admits a simple description:
    $$C(R) = C\left(\prod_{p\in P}R_p\right) \cong \prod_{p\in P}\prod_{q\leq p} R_p.$$
Explicitly, this isomorphism is given by
    \begin{equation}
    \begin{split}    
    \label{CR_bijection}
        f &\mapsto (f(x_q^p)_p)_{q\leq p} \\ 
        (x_q^p \mapsto a_{pq}^p) &\mapsfrom (a_{pq})_{q\leq p}.
    \end{split}
    \end{equation}
    
Here, and in the next two proofs, we write $x_p^q$ to mean $(x_p)^q \in \Barks(P)$ to ease the visual burden.
To make $\Barks(P)$ a $\Marks(P)$-biring, we need to give the set $C(R)$ the structure of a $\Marks(P)$-algebra functorially in $R$. The ring structure on $C(R)$ is the product ring structure. For the $\Marks(P)$-algebra structure, there are choices. We choose to rewrite $C(R)$ as
    $$\prod_{p\in P}\prod_{q \leq p} R_p \cong \prod_{q\in P}\prod_{q \leq p} R_p\cong \prod_{p\in P}\prod_{p \leq q} R_q,$$
where we have indexed over the other side of the inequality. Then the $\Marks(P)$-algebra structure map is the product of ring maps:
    $$\prod_{p\in P} \Z \xrightarrow{\prod_{p\in P} i} \prod_{p\in P} \prod_{p \leq q} R_q.$$
In the following results, we will be using this description of $C(R)$ instead, so we rewrite the isomorphism described in \cref{CR_bijection} as
\begin{equation}
    \begin{split}    
    \label{CR_bijection_rewrite}
        C(R) &\cong\prod_{p\in P}\prod_{p \leq q} R_q \\
        f &\mapsto (f(x_p^q)_q)_{p\leq q} \\
        (x_p^q \mapsto a_{pq}^q) &\mapsfrom (a_{pq})_{p\leq q}.
    \end{split}
    \end{equation}
\begin{lemma}
\label{cooperations in barks}
    The $\Marks(P)$-algebra structure on $\Alg_{\Marks(P)}(\Barks(P), R)$ described above is functorial in $R$ and hence makes $\Barks(P)$ into a $\Marks(P)$-biring with coaddition
    $$\Delta^+(x_q^p) = x_q^p\otimes 1 + 1 \otimes x_q^p,$$
    comultiplication
    $$\Delta^\times (x_q^p) = x_q^p \otimes x_q^p,$$
    and multiplicative counit
    $$\varepsilon^\times(x_q^p) = 1.$$
\end{lemma}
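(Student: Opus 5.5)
The plan is to make the structure on $C(R)=\Alg_{\Marks(P)}(\Barks(P),R)$ completely explicit through the bijection \eqref{CR_bijection_rewrite}, check functoriality there, and then read off the cooperations by evaluating on universal elements. First I will verify that the given structure on $\prod_{p\in P}\prod_{p\le q}R_q$ really is a $\Marks(P)$-algebra. It is a product of rings, hence a commutative ring. The structure map $\prod_p\Z\to\prod_p\prod_{p\le q}R_q$ is a product of unital ring maps, hence a ring map.

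Next I will check functoriality. A $\Marks(P)$-algebra map $g\colon R\to S$ is a product of ring maps $g_q\colon R_q\to S_q$. Under \eqref{CR_bijection_rewrite}, postcomposition $C(g)\colon f\mapsto g\circ f$ sends $(a_{pq})$ to $(g_q(a_{pq}))$, since $(g\circ f)(x_p^q)_q=g_q(f(x_p^q)_q)$. This coordinatewise map is a ring map. It also commutes with the structure maps, because each $g_q$ is unital and so preserves the images of the integer scalars. So $C$ lifts to a functor $\Alg_{\Marks(P)}\to\Alg_{\Marks(P)}$. By the discussion preceding the statement, such a lift is exactly a $\Marks(P)$-biring structure on $\Barks(P)$.

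To identify the cooperations, I use the Yoneda principle. The coaddition $\Delta^+$ is the element of $C(\Barks(P)\otimes_{\Marks(P)}\Barks(P))$ obtained by adding the two canonical maps $\iota_1,\iota_2\colon\Barks(P)\to\Barks(P)\otimes\Barks(P)$ in the ring $C(\Barks(P)\otimes\Barks(P))$. The tensor product is again a $\Marks(P)$-algebra, with $q$-factor $\Barks(P)_q\otimes_\Z\Barks(P)_q$. The coordinate of $\iota_1$ at $(p,q)$ is $x_p^q\otimes1$, and that of $\iota_2$ is $1\otimes x_p^q$. Since addition in $C$ is coordinatewise, $\Delta^+$ sends $x_p^q$ to $x_p^q\otimes1+1\otimes x_p^q$. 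The same argument with multiplication gives $\Delta^\times(x_p^q)=x_p^q\otimes x_p^q$.

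For the multiplicative counit $\varepsilon^\times\colon\Barks(P)\to\Marks(P)$, I need the unit of $C(\Marks(P))$, which is the element with every coordinate equal to $1$. Its preimage under \eqref{CR_bijection_rewrite} is the map $x_p^q\mapsto 1^q$. Read factorwise, this is $\varepsilon^\times(x_q^p)=1$ in the notation of the statement.

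The main obstacle is bookkeeping rather than conceptual. I have to make sure that in the tensor product the idempotent splitting lines up, so that the $q$-coordinate of $\iota_1$ and $\iota_2$ is as claimed and $\Barks(P)\otimes_{\Marks(P)}\Barks(P)$ splits as $\prod_q \Barks(P)_q\otimes\Barks(P)_q$. I also have to consistently translate between the two indexings $x_q^p$ and $x_p^q$, so that the formulas match the statement.
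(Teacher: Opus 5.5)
Your proposal is correct and is essentially the paper's argument. Both proofs make the operations on $C(R)$ coordinatewise via the bijection \eqref{CR_bijection_rewrite} and transport them to cooperations by Yoneda. The paper checks that the proposed $\Delta^+$ induces $f+g$, whereas you compute the image of the universal pair $(\iota_1,\iota_2)$ and of the unit of $C(\Marks(P))$; you also verify functoriality in $R$ explicitly, which the paper leaves implicit.
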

\begin{proof}

    The cooperations on $\Barks(P)$ correspond to the operations on $\Alg_{\Marks(P)}(\Barks(P),-)$ via the Yoneda correspondence. For example, given two maps $f,g \colon \Barks(P) \to R$, their sum is defined by first combining them into a single map
    $$\nabla(f\otimes g)\colon\Barks(P) \otimes_{\Marks(P)} \Barks(P) \to R$$
    from the coproduct and then precomposing with the coaddition map to get the sum
    $$\Barks(P) \xrightarrow{\Delta^+} \Barks(P) \otimes_{\Marks(P)} \Barks(P) \xrightarrow{\nabla(f\otimes g)} R.$$
    Therefore, it suffices to check that the claimed coaddition formula does induce the correct addition on the set of maps. Indeed, using the coaddition formula yields
    $$x_q^p \mapsto x_q^p\otimes 1 + 1 \otimes x_q^p \mapsto f(x_q^p)g(1) + f(1)g(x_q^p) = f(x_q^p)+g(x_q^p)$$
    which, under the isomorphism described in \cref{CR_bijection_rewrite}, corresponds to the coordinate-wise sum of the tuples corresponding to $f$ and $g$, which is how addition works in the product ring structure on $C(R)$. The proofs of the other two formulas are similar.
\end{proof}
Next we need to make $C$ into a comonad. First, we compute $C^2$:
\begin{align*}
    C^2(R) &\cong C\left(\prod_{p\in P}\prod_{p \leq q} R_q\right) \\
    &\cong \prod_{p\in P} \prod_{p \leq r} \prod_{r \leq q} R_q
\end{align*}
The comonadic counit $\epsilon_R \colon C(R) \to R$ is the factor-wise projection
    $$ \prod_{p\in P}\prod_{p \leq q} R_q \xrightarrow{\prod_p \pi_{p \leq p}} \prod_{p\in P} R_p, $$
and the comonadic comultiplication $\delta_R \colon C(R) \to C^2 (R)$ is the map
    $$ \prod_{p\in P}\prod_{p \leq q} R_q \rightarrow \prod_{p\in P} \prod_{p \leq r} \prod_{r \leq q} R_q $$
given by the product of diagonal maps
\[ (a_{pq})_{p \leq q} \mapsto (a_{pq})_{p \leq r \leq q}. \]

\begin{proposition}
\label{plethysm_in_barks}
    The structure maps described above endow $\Barks(P)$ with the structure of a $\Marks(P)$-plethory in which the plethysm $\circ \colon \Barks(P)\odot_{\Marks(P)}\Barks(P)\to \Barks(P)$ is given on the variables by
    $$x_u^q \circ x_p^r = \begin{cases}
        x_p^q, &\text{if $u=r$}\\
        0, &\text{otherwise.}
    \end{cases}$$
\end{proposition}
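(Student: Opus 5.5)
The plan is to verify the comonad axioms for $C=\Alg_{\Marks(P)}(\Barks(P),-)$ with the counit $\epsilon$ and comultiplication $\delta$ described above. Then I will translate the comultiplication through the Yoneda correspondence to get the plethysm formula on variables.

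\textbf{Step 1: naturality and compatibility.} Both $\epsilon_R$ and $\delta_R$ are built from projections and diagonals of product coordinates. They are therefore ring maps, and natural in $R$, since a $\Marks(P)$-algebra map $R\to S$ acts factorwise $R_q\to S_q$. I also need them to be $\Marks(P)$-algebra maps for the structure chosen before \cref{cooperations in barks}. In that structure the idempotent $e_p\in\Marks(P)$ acts on $C(R)\cong\prod_p\prod_{p\le q}R_q$ as the unit in the $p$-block. For $C^2(R)$ I first need the identification $C^2(R)\cong\prod_p\prod_{p\le r}\prod_{r\le q}R_q$, with the outer index $p$ carrying the $\Marks(P)$-structure. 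Under this identification $\delta_R$ sends the $p$-block to the $p$-block, so it is $\Marks(P)$-linear. Similarly $\epsilon_R$ sends the $p$-block to $R_p$ via the $p\le p$ coordinate, so it respects the unit idempotents. I expect this bookkeeping of which index carries the $\Marks(P)$-action, done consistently through \cref{CR_bijection_rewrite}, to be the main obstacle. A mismatch would break linearity or produce the transposed formula.

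\textbf{Step 2: comonad axioms.} Counitality says that $\epsilon_{C(R)}\circ\delta_R$ and $C(\epsilon_R)\circ\delta_R$ are the identity. The first selects $r=p$ from $(a_{pq})_{p\le r\le q}$. The second selects $r=q$, which in the iterated description means projecting each inner tuple $(a_{pq})_{r\le q}$ to its $q=r$ entry. Both return $(a_{pq})_{p\le q}$. Coassociativity holds because both composites $C\to C^3$ send $(a_{pq})$ to the tuple indexed by chains $p\le r\le s\le q$ with value $a_{pq}$. By the discussion preceding the proposition (Tall--Wraith, Borger--Wieland), this comonad structure is exactly a $\Marks(P)$-plethory structure on the biring $\Barks(P)$ from \cref{cooperations in barks}.

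\textbf{Step 3: extracting the plethysm.} By Yoneda, the plethysm $\circ\colon\Barks(P)\odot\Barks(P)\to\Barks(P)$ is obtained from $\delta$ evaluated on the universal element $\id\in C(\Barks(P))$. Concretely, $f\circ g$ is the image of $f$ under the map $\Barks(P)\to\Barks(P)$ classifying $(\delta(\id))$ after evaluating the inner map at $g$. On generators this reads as follows: for $a\in C(R)$ corresponding to $x_p^r\mapsto a_{pr}^r$, the element $\delta(a)$ is the map $\Barks(P)\to C(R)$ sending $x_u^q$ to the element of $C(R)$ concentrated in block $u$, with coordinates $a_{uq'}$ for $q'\ge q$ placed in factor $q$. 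Applying this with $R=\Barks(P)$ and $a=\id$, and pairing against the second variable $x_p^r$ (meaning evaluation through the coordinate indexed by $p\le r$), the pairing is nonzero only when the block index $u$ equals $r$. In that case it yields $x_p$ placed in factor $q$, i.e.\ $x_p^q$. This gives
\[
x_u^q\circ x_p^r=\begin{cases}x_p^q,&u=r\\0,&\text{otherwise.}\end{cases}
\]
To confirm there is no transposition error, I will sanity-check that $x_q^q$ acts as a two-sided unit on the $q$-component, and that for the singleton poset the formula recovers $x\circ x=x$.
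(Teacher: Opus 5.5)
Your Steps 1 and 2 follow the paper's route and are fine; the paper itself only spells out coassociativity. That route is: check that $C$ with the given $\epsilon,\delta$ is a comonad, then invoke the Tall--Wraith/Borger--Wieland dictionary. The problem is Step 3, which is where the content of the statement lies.

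\textbf{Your description of $\delta(a)$ is wrong.} Since $\delta(a)$ is a $\Marks(P)$-algebra map and $x_u^q=e_qx_u^q$, the element $\delta(a)(x_u^q)$ lies in block $q$ of $C(R)$, not block $u$. Its $(q,q')$-coordinate is $a_{uq'}\in R_{q'}$. So, as a map $\Barks(P)\to R$, it sends $x_q^{q'}\mapsto a_{uq'}^{q'}$ and kills every $x_s^{q'}$ with $s\neq q$.

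\textbf{Your evaluation order is reversed.} You apply $\delta(\id)$ to the outer argument $x_u^q$ and then evaluate at $x_p^r$. With the correct description, this gives $x_u^r$ when $p=q$ and $0$ otherwise. That is the formula for $x_p^r\circ x_u^q$, the transpose, not for $x_u^q\circ x_p^r$. Your sentence ``nonzero only when $u=r$, yields $x_p^q$'' follows neither from your own description of $\delta(a)$ nor from the correct one. The target formula is asserted, not derived.

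\textbf{The fix.} Use the uncurrying $(x\mapsto(y\mapsto z))\mapsto(y\odot x\mapsto z)$ as the paper does, i.e.\ $f\circ g=\delta(\id)(g)(f)$. This order is forced. Since $\delta(\id)(g)\in\Alg_{\Marks(P)}(\Barks(P),\Barks(P))$, it is exactly what makes $(-)\circ g$ a $\Marks(P)$-algebra map, as a plethysm must be. Then $\delta(\id)(x_p^r)$ lies in block $r$ of $C(\Barks(P))$, with $(r,q)$-coordinate $x_p\in\Z[x_s\mid s\le q]$. As a map it kills $x_u^q$ for $u\neq r$ and sends $x_r^q\mapsto x_p^q$, which is the claimed formula.

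\textbf{Your sanity checks would not catch the error.} The singleton poset cannot see a transposition. The plethystic unit is $\sum_r x_r^r$, the element inducing $\epsilon_R(f)=(f(x_p^p)_p)_p$, and both the correct formula and its transpose have it as a two-sided unit. Your claim that $x_q^q$ is a two-sided unit on the $q$-component is actually false for the correct formula: $x_p^q\circ x_q^q=0$ for $p<q$. The check that does discriminate is $\Marks(P)$-linearity in the first slot: $x_u^q\circ g$ must be concentrated in factor $q$. The correct formula satisfies this. The transpose does not, since it would give $x_u^q\circ x_q^r=x_u^r$ with $r>q$.
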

\begin{proof}
    The first part of the statement reduces to checking that $C$ is a comonad on $\Alg_{\Marks(P)}$. This is a straightforward check using the explicit descriptions of the structure maps, but we include the coassociativity check as an example:
    \[
    \begin{tikzpicture}
        \node (OuterUL) at (0,3) {$(a_{pq})_{p\leq q}$};
        \node (OuterDL) at (0,0)
        {$(a_{pq})_{p \leq r \leq q}$};
        \node (OuterDM) at (5,0)
        {$(a_{pq})_{p \leq r \leq s \leq q}$};
        \node (OuterDR) at (8,0)
        {$(a_{pq})_{p \leq s \leq r \leq q}$};
        \node (OuterUR) at (8,3)
        {$(a_{pq})_{p \leq r \leq q}$};
        \draw[|->] (OuterUL) -- (OuterDL);
        \draw[|->] (OuterUL) -- (OuterUR);
        \draw[|->] (OuterDL) -- (OuterDM);
        \draw[white] (OuterDM) -- node[color=black]{=} (OuterDR);
        \draw[|->] (OuterUR) -- (OuterDR);

        \node (InnerUL) at (1.5,2.25) {$C(R)$};
        \node (InnerUR) at (6,2.25) {$C^2(R)$};
        \node (InnerDL) at (1.5,1) {$C^2(R)$};
        \node (InnerDR) at (6,1) {$C^3(R)$};

        \draw[->] (InnerUL) -- node[above]{\small $\delta_R$} (InnerUR);
        \draw[->] (InnerUL) -- node[left]{\small $\delta_R$} (InnerDL);
        \draw[->] (InnerUR) -- node[right]{\small $\delta_{CR}$} (InnerDR);
        \draw[->] (InnerDL) -- node[below]{\small $C\delta_R$} (InnerDR);
    \end{tikzpicture}
    \]

    Next, we need to check that the formula for the plethysm in the statement corresponds to the comonadic comultiplication $\delta$. The plethysm $\Barks(P) \odot_{\Marks(P)} \Barks(P) \to \Barks(P)$ is obtained from $\delta$ by applying $\delta$ to the element $\id_{\Barks(P)} \in C(\Barks(P))$ to get an element of
    \begin{align*}
        C^2(\Barks(P)) &= \Alg_{\Marks(P)}\left(\Barks(P), \Alg_{\Marks(P)}(\Barks(P), \Barks(P))\right)\\
        &\cong \Alg_{\Marks(P)}(\Barks(P)\odot_{\Marks(P)} \Barks(P), \Barks(P)).
    \end{align*}
    The last step here looks like ``uncurrying'': it takes $(x \mapsto (y \mapsto z))$ to $(y\odot x \mapsto z)$ (note the change in the order). The comonadic comultiplication is defined by the diagram
    \[\begin{tikzcd}[column sep = small]
        C(\Barks(P))
        \ar{r}{\cong}
        \ar[dotted]{dd}
        &
        \prod_{p\in P} \prod_{p \leq q} \Z[x_s \mid s \leq q]
        \ar{d}{\delta_{\Barks(P)}}\\
        &
        \prod_{p\in P} \prod_{p \leq r} \prod_{r \leq q} \Z[x_s \mid s \leq q]\\
        C^2(\Barks(P))
        \ar{r}{\cong}
        &
        C\left(\prod_{p\in P}\prod_{p\leq q} \Z[x_s \mid s \leq q]\right). \ar{u}{\cong}
    \end{tikzcd}\]
    Chasing $\id_{\Barks(P)}$ around this diagram, we get
    \[\begin{tikzcd}[column sep = small, ampersand replacement = \&]
        \id_{\Barks(P)}
        \ar[mapsto]{r}
        \ar[mapsto]{dd}
        \&
        (x_p)_{p\leq q}
        \ar[mapsto]{d}\\
        \&(x_p)_{p\leq r\leq q}\ar[mapsto]{d}\\
        {\left(x_p^r \mapsto \left(x_u^q \mapsto \begin{cases}
            x_p^q, &\text{if $u = r$}\\
            0, &\text{if $u\neq r$} 
        \end{cases}\right)\right)\ar[mapsto]{r}}
        \&
        (x_p^r \mapsto ((x_p)_{r\leq q})^r).
    \end{tikzcd}\]
    Uncurrying this yields the map
    $$x_u^q \odot x_p^r \mapsto \begin{cases}
        x_p^q, &\text{if $u=r$}\\
        0, &\text{otherwise.}
    \end{cases} $$
\end{proof}
\begin{corollary}
\label{composing_with_arbitrary_elements_in_barks}
    For $p\leq q\in P$ and $f\in \Barks(P)$, we have
    $$x_p^q \circ f = (f_p)^q.$$
\end{corollary}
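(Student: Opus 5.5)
The plan is to use that $(-)\circ f$ is a ring map for every $f$ together with the explicit formula for plethysm on variables from \cref{plethysm_in_barks}. Fix $p\le q$ and $f\in\Barks(P)$. Since $\circ$ is induced by a map of $\Marks(P)$-birings $\Barks(P)\odot_{\Marks(P)}\Barks(P)\to\Barks(P)$, the map $x_p^q\circ(-)\colon \Barks(P)\to\Barks(P)$ is a ring homomorphism. Concretely, it is the composite of $f\mapsto f\odot$-type evaluation with the $\Marks(P)$-algebra map $\Barks(P)\to \Barks(P)$ determined by the image of $x_p^q$ under the uncurried map, as in the last step of the proof of \cref{plethysm_in_barks}. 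It therefore suffices to identify this ring map and compare it with $f\mapsto (f_p)^q$.

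First I would check that $f\mapsto (f_p)^q$ is also a ring map. It is the composite of projection onto the $p$-th factor $\Z[x_s\mid s\le p]$, followed by the inclusion $\Z[x_s\mid s\le p]\subseteq\Z[x_s\mid s\le q]$ (valid since $p\le q$), followed by the inclusion of the $q$-th factor. The last step is multiplicative and additive but sends $1$ to the idempotent $1^q$, not to $1$. So the precise claim is that both maps are non-unital ring maps with the same behaviour on $\Marks(P)$-scalars: each sends the idempotent $e_r$ to $e_q$ if $r=p$ and to $0$ otherwise.

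For the scalars, I would read the co-$\Marks(P)$-linear structure off \cref{cooperations in barks}. The scalar $e_r$ corresponds to the constant operation $C(R)\ni a\mapsto$ the image of $e_r$ under the structure map $\prod_p\Z\to\prod_p\prod_{p\le q}R_q$. Evaluating at $\id_{\Barks(P)}$, we get $x_p^q\circ e_r = (e_r)_p$ placed in factor $q$, which agrees with $((e_r)_p)^q$. On generators, \cref{plethysm_in_barks} gives
\[
x_p^q\circ x_s^r = \begin{cases} x_s^q & r=p,\\ 0 & r\neq p,\end{cases}
\]
and this matches $((x_s^r)_p)^q$. Since $\Barks(P)$ is generated as a ring by the $e_r$ and the $x_s^r$ (with $x_s^r=e_r x_s^r$), two ring maps that agree on these generators and are both additive and multiplicative must agree everywhere.

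The main obstacle is justifying that $x_p^q\circ(-)$ is a ring homomorphism and pinning down its value on scalars. Both require unwinding the Tall–Wraith product $\odot_{\Marks(P)}$ in terms of the biring structure. I would handle this directly via the Yoneda description, using the comonad formula $\delta$: $x_p^q\circ g$ is the $(p\le q)$-coordinate of $\delta$ applied to the element of $C(\Barks(P))$ classified by $g$. This coordinate is visibly $g$'s $p$-th component pushed into factor $q$, because $\delta$ is a product of diagonal maps and the $(p\le q)$-slot of $C(R)$ is $R_q$. This route might even make the generator check unnecessary.
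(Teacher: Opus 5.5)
Your strategy is the paper's: show that $x_p^q\circ(-)$ is additive and multiplicative, then compare it with $f\mapsto (f_p)^q$ on ring generators using \cref{plethysm_in_barks}. Your explicit check on the idempotents $e_r$ is a real improvement. Since $x_p^q\circ 1=1^q$, the map $x_p^q\circ(-)$ is not unital. The paper's step $x_p^q\circ\sum_r (f_r)^r = x_p^q\circ (f_p)^p$ therefore tacitly needs $x_p^q\circ e_r=\delta_{pr}e_q$ for the constant terms, which its proof leaves implicit. Reading this off the co-$\Marks(P)$-linear structure, where $e_r$ goes to the tuple with ones exactly in the slots $(r\le q)$, is the right way to get it.

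The step whose stated justification fails is the ring-map property. That $\circ$ is induced by a biring map $\Barks(P)\odot_{\Marks(P)}\Barks(P)\to\Barks(P)$ only gives that $(-)\circ g$ is a $\Marks(P)$-algebra map. That is the fact in your first sentence, but it is not the one you use. In the second variable the Tall--Wraith relations only give $b\circ(g+h)=\Delta^+(b)(g,h)$ and $b\circ(gh)=\Delta^\times(b)(g,h)$. So $b\circ(-)$ is additive and multiplicative only when $b$ is primitive and grouplike; for instance, $x^2\circ(-)$ on $\Z[x]$ is not additive. The missing input is \cref{cooperations in barks}: $\Delta^+(x_p^q)=x_p^q\otimes 1+1\otimes x_p^q$ and $\Delta^\times(x_p^q)=x_p^q\otimes x_p^q$. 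This is exactly what the paper uses.

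Your fallback via $\delta$ also works once phrased correctly. It is not $\delta$ applied to an element classified by $g$. Rather, view $\delta_{\Barks(P)}(\id_{\Barks(P)})$ as a $\Marks(P)$-algebra map $\Barks(P)\to C(\Barks(P))\cong\prod_{u}\prod_{u\le q}\Barks(P)_q$; it sends $g$ to the tuple $\bigl((x_u^q\circ g)_q\bigr)_{u\le q}$. The target carries the product ring structure, and this map sends each factor $\Z[x_s\mid s\le r]$ diagonally into $\prod_{r\le q}\Z[x_s\mid s\le q]$. Hence the $(p\le q)$-entry of the image of $g$ is $g_p$. This gives additivity, multiplicativity, the values on scalars and the values on generators at once, so it would indeed make the separate generator check unnecessary.
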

\begin{proof}
    First, note that the map $(x_p^q \circ -) \colon \Barks(P) \to \Barks(P)$ is a ring map for every $p \leq q \in P$. This follows from the coaddition and comultiplication formulas for $x_p^q$ in \cref{cooperations in barks}. For example, the additivity comes from
    \begin{align*}
        x_p^q \circ (a + b) &= (\Delta^+ x_p^q)(a,b) \\
        &= (x_p^q \otimes 1 + 1\otimes x_p^q)(a,b) \\
        &:= (x_p^q\circ a)(1\circ b) + (1\circ a)(x_p^q \circ b) \\
        &= x_p^q \circ a + x_p^q\circ b.
    \end{align*}
    The equality of the second and third lines is as in \cite[Equation 1.3.2]{BorgerWieland2005}. 
    
    Now, we write $f = \sum_{r\in P} (f_r)^r$, where $f_r$ is a polynomial in variables $x_s$ with $s\leq r$. Since $x_{p}^{q} \circ (-)$ is a ring map and 
       $$x_u^q \circ x_p^r \mapsto \begin{cases}
        x_p^q, &\text{if $u=r$}\\
        0, &\text{otherwise,}
       \end{cases}
       $$
       we have
    \begin{align*}
        x_p^q \circ f &= x_p^q \circ (\sum_{r\in P}(f_r)^r) \\
        &= x_p^q \circ (f_p)^p \\
        &= (f_p)^q.
    \end{align*}
\end{proof}
We note another property of this plethysm that we use later in the paper.
\begin{proposition}
\label{plethysm_depends_only_on_the_relevant_coordinate}
    Let $P$ be a finite poset and let $f, g\in \Barks(P)$. Then for any $p\in P$,
    $$(f\circ g)_p = ((1^pf) \circ g)_p = ((f_p)^p \circ g)_p,$$
    i.e., the function $(f\circ -)_p$ only depends on the $p^{th}$-coordinate of $f$.
\end{proposition}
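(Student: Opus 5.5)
Since $\Barks(P)$ is a $\Marks(P)$-plethory by \cref{plethysm_in_barks}, for fixed $g$ the map $(-)\circ g \colon \Barks(P) \to \Barks(P)$ is a $\Marks(P)$-algebra homomorphism. This is the general fact that plethysm is linear and multiplicative in the left variable and fixes the coscalars; see \cite[Section 1.3]{BorgerWieland2005}. The plan is to exploit this $\Marks(P)$-linearity together with the splitting of $\Marks(P)$-algebras into products.

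Let $1^p \in \Marks(P)$ denote the idempotent that is $1$ in the $p$th coordinate and $0$ elsewhere. Since $(-)\circ g$ is $\Marks(P)$-linear, we get
\[
(1^p f)\circ g = 1^p (f\circ g).
\]
Taking $p$th coordinates gives $((1^pf)\circ g)_p = (1^p(f\circ g))_p = (f\circ g)_p$, because multiplication by $1^p$ does not change the $p$th coordinate. Finally, $1^p f = (f_p)^p$ by \cref{concentration notation}, which gives the second equality.

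The one point to justify is that $(-)\circ g$ is genuinely $\Marks(P)$-linear, rather than merely additive and multiplicative. For this I would use that the coscalar structure on $\Barks(P)$ was chosen so that the $\Marks(P)$-algebra structure on $C(R)$ comes from the diagonal-type inclusion $\prod_p \Z \to \prod_p\prod_{p\le q} R_q$.

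An alternative, more hands-on route avoids the abstract fact. Write $f$ as a polynomial in the variables $x_u^q$ with coefficients in $\Marks(P)$. Use that $(-)\circ g$ is a ring map sending $x_u^q$ to $(g_u)^q$ (\cref{composing_with_arbitrary_elements_in_barks}) and fixing $\Marks(P)$. Then every monomial involving a variable $x_u^q$ with $q\ne p$ maps to something concentrated in coordinate $q$, so it does not affect the $p$th coordinate. Only the terms of $f_p$ contribute to $(f\circ g)_p$.

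The main obstacle is verifying that constants $c \in \Marks(P)$ satisfy $c\circ g = c$. I would check this from the coscalar map $\beta$: an element $c$ of $\Marks(P)$ corresponds to the constant natural transformation, and the construction of the $\Marks(P)$-structure on $C(R)$ above shows that the $p$-coordinate idempotent acts as $1^p$.
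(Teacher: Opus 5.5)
Your proposal is correct and is essentially the paper's proof: both use that $(-)\circ g$ is a $\Marks(P)$-algebra map, so $((1^pf)\circ g)_p = (1^p(f\circ g))_p = (f\circ g)_p$, and then note that $1^pf = (f_p)^p$. One small correction to your side remarks: $c\circ g = c$ for $c\in\Marks(P)$ holds in any plethory because points of $C(R)=\Alg_{\Marks(P)}(\Barks(P),R)$ are $\Marks(P)$-algebra maps, so it does not depend on the coscalar structure, which governs composites $g\circ c$ with the constant in the right slot instead.
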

\begin{proof}
    Since $\Barks(P)$ is a $\Marks(P)$-plethory, its plethysm $\circ$ is a $\Marks(P)$-algebra map in the first entry. We then have
    \begin{align*}
        (f\circ g)_p = (1^p(f\circ g))_p = ((1^pf) \circ g)_p
    \end{align*}
    and $1^pf = (f_p)^p$.
\end{proof}

\begin{remark}
\label{relative_barks}
    All of the above arguments go through relative to a fixed ring $R$, if we define
    $$\Marks(P,R) = \prod_{p\in P} R$$
    and
    $$\Barks(P,R) = \prod_{p\in P} R[x_q\mid q \leq p].$$
\end{remark}

\begin{remark}
The reader may notice that the results of this section generalize to a finite category $C$ in place of a finite poset. In this case,
\[
\Marks(C) = \prod_{c \in \mathrm{ob}C} \Z
\]
and
\[
\Barks(C) = \prod_{c \in \mathrm{ob}C} \Z[x_{f} \mid f \in \mathrm{ob} (C_{/c})]
\]
and the plethysm is determined by
\[
x_f \circ x_g = \begin{cases}
        x_{f \circ g}, &\text{if $f$ and $g$ are composable}\\
        0, &\text{otherwise.}
    \end{cases}
\]
\end{remark}

\begin{example}
    Let $p$ be a prime number. Consider the group $G = C_p$ and the finite poset $P = \Sub(C_p)/C_p = \Sub(C_p)$.
    By \cref{barks_definition_for_a_poset}, we have 
    \begin{equation*}
        \Barks(C_p) = \Z[x_e] \times \Z[x_e, x_{C_p}].
    \end{equation*} Now
    consider the two elements (see \cref{ex:cpcharacter})
    \begin{align*}
        \chi([C_p/e \to \ast]) &= (x_e^p, x_{C_p}) \\
        \chi([C_p/C_p \to \ast]) &= (x_e, x_e)
    \end{align*}
    of $\Barks(C_p)$. Here $x_e^p$ denotes the $p$th power of $x_e$. The composition product of these elements is
    \begin{align*}
        (x_e^p, x_{C_p}) \circ (x_e, x_e) &= ((x_e^p, 0) + (0, x_{C_p})) \circ (x_e, x_e)\\
        &= ((x_e^p, 0) \circ (x_e, x_e)) + ((0, x_{C_p}) \circ (x_e, x_e))\\
        &= ((x_e, 0) \circ (x_e, x_e))^p + ((0, x_{C_p}) \circ (x_e, x_e)),
    \end{align*}
    because plethysms are always ring maps in the first entry. Applying \cref{composing_with_arbitrary_elements_in_barks} to each summand, we obtain 
    \begin{equation*}
    (x_e^p, 0) + (0,x_e) = 
    (x_e^p, x_e). 
    \end{equation*}
\end{example}

\begin{example}
\label{composition_product_for_C4_example}
Let $G = C_4$ so that
\[
\Barks(C_4) = \Z[x_e] \times \Z[x_e,x_{C_2}] \times \Z[x_e,x_{C_2},x_{C_4}].
\]
Consider the element $s = [C_4/e \rightarrow C_4/C_2] \in P(C_4)$, which, by \cref{ex:c4character}, has character
\[ \chi(s) = (2x_e^2,2x_e,0) \in \Barks(C_4). \]   
Let $\bar{u} = (u_1, u_2, u_3)\in\Barks(C_4)$. The composite 
\begin{equation*}
    \chi(s) \circ \bar{u} = 
    (2x_e^2, 2x_e, 0) \circ (u_1, u_2, u_3). 
\end{equation*}
is given by 
\begin{align*}
    (2x_e^2, 2x_e, 0) \circ  (u_1, u_2, u_3)
    &= ((2x_e^2, 0, 0) + (0,2x_e,0)) \circ (u_1, u_2, u_3)\\
    &= ((2x_e^2, 0, 0) \circ (u_1, u_2, u_3)) + ( (0,2x_e,0) \circ (u_1, u_2, u_3))\\
    &= (2u_1^2, 0, 0) + (0, 2u_1, 0)\\
    &=(2u_1^2, 2u_1, 0).
\end{align*}
Note that we have made use of the fact that $u_1$ can be viewed as an element in the second factor of $\Barks(C_4)$.
\end{example}
The above examples demonstrate how straightforward it is to compute the composition product in the $\Marks(G)$-plethory $\Barks(G)$.

\section{Compatibility of Composition}

In this section, we show that the composition product on $P^+(G)$ described in \cref{CompDiagram} and the plethysm on $\Barks(G)$ defined in 
\cref{plethysm_in_barks} (for the poset $P = \Sub(G)/G$) are compatible through the character map defined in \cref{character_formula}. For all finite $G$, we show that the following equality holds in $\Barks(G)$:
\begin{equation}
\label{chi_respects_composition}
    \chi([X \xrightarrow{\psi} Y] \circ_P [A \xrightarrow{\varphi} B]) = \chi([X \xrightarrow{\psi} Y]) \circ_B \chi([A \xrightarrow{\varphi} B]),
\end{equation}
where $\circ_P$ is the composition product in $P^+(G)$ and $\circ_B$ is the composition product in $\Barks(G)$. The left-hand side here requires computing the character of a composite, for which we need an explicit description of the composite.

\subsection{Composites, explicitly} We compute composites in $P(G)$ and set up some notational conventions which are employed regularly throughout the rest of the section.

\begin{proposition}
	\label{alt_dep_prod}
	Let $\prod_f B \times X$ be the dependent product of the projection map $\ell \colon B \times X \to X$ along $f \colon X \to Y$. There is a bijection of sets
	\[
		\prod_f B \times X \cong \bigsqcup_{y \in Y} \Fin(f^{-1}(y),B).
	\]
    The $G$-action on the left determines the following $G$-action on the right:
    \[
    g(y, h) = (gy, gh(g^{-1}-)).
    \]   
\end{proposition}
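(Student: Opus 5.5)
The plan is to unwind the explicit model of the dependent product given just after its definition and specialize it to $\ell = \pi_X \colon B \times X \to X$. By that model, $\prod_f (B\times X)$ consists of pairs $(y,\sigma)$ with $y \in Y$ and $\sigma \colon f^{-1}(y) \to B \times X$ satisfying $\pi_X \circ \sigma = \id_{f^{-1}(y)}$. The key observation is that a section of the projection $B \times X \to X$ over the subset $f^{-1}(y)$ is determined by its $B$-component. Concretely, $\sigma$ must have the form $\sigma(x) = (h(x), x)$ for a unique function $h \colon f^{-1}(y) \to B$, namely $h = \pi_B \circ \sigma$.

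This gives mutually inverse maps
\[
(y,\sigma) \longmapsto (y, \pi_B\circ\sigma), \qquad (y,h) \longmapsto \bigl(y, x \mapsto (h(x),x)\bigr),
\]
between $\prod_f (B \times X)$ and $\bigsqcup_{y\in Y}\Fin(f^{-1}(y),B)$. Checking that they are inverse is immediate. One direction uses the identity $\pi_X\circ\sigma=\id$, and the other uses the fact that $x\mapsto (h(x),x)$ is visibly a section. The map $h$ here is an arbitrary function of sets, not an equivariant one, since $f^{-1}(y)$ is only a set (or a $\Stab(y)$-set).

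For the action, I will transport the $G$-action $g\cdot(y,\sigma) = (gy, {}^g\sigma)$ with ${}^g\sigma(x) = g\sigma(g^{-1}x)$ across the bijection. Note that $g^{-1}$ carries $f^{-1}(gy)$ to $f^{-1}(y)$ because $f$ is equivariant, so ${}^g\sigma$ is defined on $f^{-1}(gy)$. If $\sigma(x') = (h(x'),x')$, then
\[
{}^g\sigma(x) = g\cdot(h(g^{-1}x), g^{-1}x) = (g h(g^{-1}x), x),
\]
using the diagonal action on $B\times X$. Its $B$-component is therefore $gh(g^{-1}-)$, which is exactly the stated formula.

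There is no real obstacle here. The only points needing care are the bookkeeping of domains, namely that $gh(g^{-1}-)$ is a function on $f^{-1}(gy)$, and the fact that the action on $B\times X$ is diagonal. The latter is what makes the $X$-coordinate come back to $x$, so that the transported element is again a section.
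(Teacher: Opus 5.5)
Your proposal is correct and follows the same idea as the paper: a section of the projection $B\times X\to X$ over $f^{-1}(y)$ is determined by its $B$-component. You actually go further than the paper's proof. You transport the conjugation action explicitly through the bijection and verify the formula $g(y,h)=(gy,gh(g^{-1}-))$, including the domain bookkeeping, whereas the paper simply asserts the action.
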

\begin{proof}
    Recall that the dependent product on the left-hand side of the above is given by 
    \begin{equation*}
        \prod_f B \times X = \{ (y, \sigma) \mid y \in Y, \sigma \colon f^{-1}(y) \to B \times X, \ell \circ \sigma = \id_{f^{-1}(y)} \}.
    \end{equation*}
    Moreover, note that $\Fin(f^{-1}(y), B)$ is isomorphic to $\Fin(f^{-1}(y), l^{-1}(y))$. 
    Then, the desired isomorphism is obtained
    from the simple observation that for all $x \in f^{-1}(y)$, the equality $\ell(\sigma(x)) = x$ implies that $\sigma(x) \in \ell^{-1}(x)$ is isomorphic to $B$. In other words, each such section really has codomain $B$.
\end{proof}
\begin{notation}
\label{disjoint_union_vs_coprod}
    In \cref{alt_dep_prod}, we use the symbol $\bigsqcup$ instead of $\coprod$. Throughout this section, we maintain a careful distinction between the two. The coproduct symbol $\coprod$ denotes the usual coproduct of $G$-sets in which $G$ acts by acting on each summand separately. On the other hand, a disjoint union such as $\bigsqcup_{i\in I} X_i$ will mean that the sets $X_i$ are not necessarily closed under the action of $G$, but the entire disjoint union is. All disjoint unions that we will encounter in this section come from the description of the dependent product in \cref{alt_dep_prod}. Consequently, these disjoint unions will have the feature that $G$ acts on the indexing set $I$ to permute the summands, while also transforming the entries in each summand.
\end{notation}

\begin{lemma}
\label{formula_for_composition_product}
    Let $[X \xrightarrow[]{\psi} Y]$ and $[A \xrightarrow[]{\phi} B]$ be two elements of $P(G)$. Then their composition
    product is given by 
    \[
		[X \xrightarrow{\psi} Y] \circ [A \xrightarrow{\phi} B] = \left[ \bigsqcup_{x \in X} A \times \Fin(\psi^{-1}(\psi(x)) \setminus \{x\},B) \xrightarrow[]{\beta} \bigsqcup_{y \in Y} \Fin(\psi^{-1}(y),B)\right],
	\]
    where the $G$-action on the source is given by
    \[
g(x,a,f) = (gx,ga,{}^g\!f),
    \]
    the $G$-action on the target is given by
    \[
    g(y,f) = (gy, {}^g\!f),
    \]
    and
    \begin{equation*}
        \beta(x,(a,f)) = (\psi(x), \tilde{f} \colon \psi^{-1}(\psi(x)) \to B)
    \end{equation*}
    satisfying 
    \begin{equation*}
        \tilde{f}(t) = 
        \begin{cases}
        \varphi(a) &\text{ if } t = x\\
        f(t) &\text{ otherwise}.
        \end{cases}
    \end{equation*}
\end{lemma}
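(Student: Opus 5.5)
The plan is to read off the composite directly from \cref{CompDiagram}, with $[A \xrightarrow{\varphi} B]$ as the first bispan and $[X \xrightarrow{\psi} Y]$ as the second. I will identify each object and map in that diagram explicitly, using \cref{alt_dep_prod} for the dependent product.

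First, the pullback of $B \to \ast \leftarrow X$ is $B \times X$, and its map to $X$ is the projection $\ell$. The dependent product $\prod_\psi(B\times X)$ of $\ell$ along $\psi$ is identified by \cref{alt_dep_prod} with $\bigsqcup_{y\in Y}\Fin(\psi^{-1}(y),B)$, carrying the action $g(y,h)=(gy,{}^g h)$. This is the claimed target, and the map to $\ast$ is the terminal map.

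Next, the exponential diagram has top-middle object $X\times_Y \prod_\psi(B\times X)$. I will identify it with $\bigsqcup_{x\in X}\Fin(\psi^{-1}(\psi(x)),B)$, whose points are pairs $(x,h)$ with $h\colon \psi^{-1}(\psi(x))\to B$. Under this identification:
\begin{itemize}
\item the evaluation map to $B\times X$ is $(x,h)\mapsto (h(x),x)$;
\item the map to the dependent product is $(x,h)\mapsto(\psi(x),h)$.
\end{itemize}

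The final pullback is along $\varphi\times\id\colon A\times X\to B\times X$ (the map $A\times X \to B \times X$ induced by $\varphi$, which is the pullback (P2) in the general picture). Its points are triples $(x,h,a)$ with $h(x)=\varphi(a)$. Such a triple is determined by $x$, by $a$, and by the restriction $f=h|_{\psi^{-1}(\psi(x))\setminus\{x\}}$, since $h(x)=\varphi(a)$ is then forced. This gives a bijection with $\bigsqcup_{x\in X} A\times\Fin(\psi^{-1}(\psi(x))\setminus\{x\},B)$. The inverse sends $(x,a,f)$ to $(x,\tilde f,a)$, with $\tilde f$ defined as in the statement. Composing with the map to the dependent product yields exactly $\beta(x,a,f)=(\psi(x),\tilde f)$.

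It remains to check $G$-equivariance of the bijection, for the diagonal action $g(x,a,f)=(gx,ga,{}^g f)$. Note that ${}^g f$ is defined on $\psi^{-1}(\psi(gx))\setminus\{gx\}=g\bigl(\psi^{-1}(\psi(x))\setminus\{x\}\bigr)$. Then one verifies $\widetilde{{}^g f}={}^g\tilde f$ using $g\varphi(a)=\varphi(ga)$. I expect this equivariance and well-definedness bookkeeping to be the main (though routine) obstacle. The key care points are that the disjoint unions are not $G$-stable summandwise (\cref{disjoint_union_vs_coprod}), and that the conjugation convention ${}^g h(t)=g\,h(g^{-1}t)$ matches the one in \cref{alt_dep_prod}. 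Since the composite bispan is defined only up to isomorphism, exhibiting these equivariant bijections compatible with all the maps suffices.
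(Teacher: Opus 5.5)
Your proposal is correct and takes the same route as the paper: identify the dependent product via \cref{alt_dep_prod}, then compute the successive pullbacks in \cref{CompDiagram}. You supply the details the paper leaves as ``easily seen.'' These are the explicit evaluation map, the observation that the constraint $h(x)=\varphi(a)$ lets you replace $h$ by its restriction to $\psi^{-1}(\psi(x))\setminus\{x\}$, and the equivariance check $\widetilde{{}^g f}={}^g\tilde f$.
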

\begin{proof}
By \cref{alt_dep_prod}, the dependent product of the projection map $\pi_2 \colon B \times X \to X$ along $\psi \colon X \to Y$ is given by $\bigsqcup_{y \in Y}\Fin(\psi^{-1}(y),B)$. The remainder of the top row in \cref{CompDiagram} is obtained by taking successive pullbacks. The objects in the top row are easily seen to be pullbacks of the diagrams highlighted in blue and red.   
\end{proof}

To prove \cref{chi_respects_composition}, it suffices to check the case in which $[X\to Y]$ is a basis element of $P^+(G)$ since both composition products are additive in the first entry. By \cref{basis_elements_are_canonical_quotients}, we can assume that the basis elements are of the form 

\begin{equation*}
    \left[ \coprod_{i=1}^{n} G/K_i \xrightarrow[]{\sum\limits_{i=1}^{n} \psi_i} G/L \right],
\end{equation*}
with each $K_i\subseteq L$ and where each $\psi_i$ is the canonical quotient maps $G/K_i \to G/L$. We will make this assumption throughout this section.

\begin{notation}
    To make it easier to type check the expressions in this section, we use square brackets exclusively to mean ``equivalence class'' of subgroups (under conjugacy), $G$-sets (under isomorphism), or bispans (under isomorphism). Also, since the monoid ring construction on $A^+(G)$ turns the addition of $A^+(G)$ into multiplication, we have
    \begin{equation}
        \label{coproducts_to_products}
        \left[\coprod_{i\in I} X_i\right] = \prod_{i\in I}[X_i]
    \end{equation}
    for any finite collection of $G$-sets $\{X_i\}_{i\in I}$. Here, we follow the convention that putting square brackets around a $G$-set makes it into an element of the monoid ring $\mathbb Z[A^+(G)]$. In particular, the right side in the above equation is a product in the monoid ring and should not be interpreted as the isomorphism class of the product $G$-set. 
    
    Similarly, putting a subgroup $M$ of $G$ in the superscript of an expression will always mean taking $M$-fixed points and putting a conjugacy class $[M]$ in the superscript means a tuple concentrated in the $[M]$-coordinate (\cref{concentration notation}). 
\end{notation}
\begin{lemma}
\label{composition_product_of_basis_elt_with_arb_elt}
    Let $\{K_i\}_{1 \leq i \leq n}$ be a set of subgroups of $G$ contained in $L \subseteq G$ and let $$\left[\coprod_{i=1}^{n} G/K_i \xrightarrow[]{\psi} G/L\right]$$ be an element of $P^+(G)$ such that each summand $\psi_i \colon G/K_i \to G/L$ is the canonical quotient map. Additionally, let 
    $[A \xrightarrow[]{\varphi} B]$ be an arbitrary element of $P^+(G)$. Then
    the composition product $[\psi] \circ_P [\varphi]$
    is given by
    \begin{equation*}
        \left[ \coprod_{j=1}^{n} \bigsqcup_{xK_j \in G/K_j} 
        A \times \Fin\left(\left(\coprod_{i=1}^{n} xL/K_i\right) \setminus \{ (j, xK_j) \}, B\right) \xrightarrow[]{\beta} 
        \bigsqcup_{yL \in G/L} \Fin\left(\coprod_{i=1}^{n} yL/K_i, B\right) \right].
    \end{equation*}
    The $G$-action on the domain is given by 
    \begin{equation*}
        g(j,xK_j, (a, h)) = 
        (j, gxK_j, (ga, gh(g^{-1}-))),
    \end{equation*}
    where $1 \leq j \leq n$, $xK_j \in G/K_j$, $a \in A$, and $h \colon (\coprod_{i=1}^{n} xL/K_i) \setminus \{ (j, xK_j) \} \to B$. 
    The $G$-action on the codomain is
    given by
    \begin{equation*}
        g (yL, f) = (gyL, gf(g^{-1} -)),
    \end{equation*}
    where $yL \in G/L$ and $f \colon \coprod_{i=1}^{n} yL/K_i \to B$. 
    The map $\beta$ is given by
    $$\beta(j, xK_j, (a, f)) = \left(xL, \tilde f\right),$$
    where $\tilde{f}$ is determined by extending $f$ to all of $\coprod_{i=1}^n xL/K_i$ by defining $f(j, xK_j) = \varphi(a)$.
\end{lemma}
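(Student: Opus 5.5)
This is a direct specialization of \cref{formula_for_composition_product} to $X = \coprod_{i=1}^n G/K_i$, $Y = G/L$ and $\psi = \sum_i \psi_i$. So the plan is to compute the two kinds of fibers of $\psi$ explicitly and substitute them into that lemma.

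\emph{Fibers over points of $Y$.} For $yL \in G/L$, the fiber $\psi^{-1}(yL)$ is the coproduct over $i$ of $\psi_i^{-1}(yL)$. Since $\psi_i$ is the canonical quotient $xK_i \mapsto xL$, we have $\psi_i^{-1}(yL) = \{xK_i \mid xL = yL\} = yL/K_i$, viewed as a subset of $G/K_i$. This set is well defined because $K_i \subseteq L$. Hence
\[
\psi^{-1}(yL) = \coprod_{i=1}^n yL/K_i,
\]
and the target $\bigsqcup_{y\in Y}\Fin(\psi^{-1}(y),B)$ becomes $\bigsqcup_{yL \in G/L}\Fin(\coprod_i yL/K_i, B)$.

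\emph{Fibers through points of $X$.} A point of $X$ is a pair $(j, xK_j)$ with $1\le j\le n$ and $xK_j \in G/K_j$. Then $\psi(j,xK_j) = xL$, so
\[
\psi^{-1}(\psi(j,xK_j)) \setminus \{(j,xK_j)\} = \Bigl(\coprod_{i=1}^n xL/K_i\Bigr) \setminus \{(j,xK_j)\}.
\]
Also $\bigsqcup_{x \in X}$ splits as $\coprod_{j=1}^n \bigsqcup_{xK_j \in G/K_j}$. This is a genuine coproduct over $j$, because the $G$-action on $X$ preserves the summands $G/K_j$. This gives the source.

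\emph{Actions and the map $\beta$.} The $G$-actions $g(x,a,f) = (gx,ga,{}^g\!f)$ and $g(y,f) = (gy,{}^g\!f)$ from \cref{formula_for_composition_product} translate literally into the stated formulas. Here ${}^g\!h = gh(g^{-1}-)$. I need to check that this sends functions on $(\coprod_i xL/K_i)\setminus\{(j,xK_j)\}$ to functions on $(\coprod_i gxL/K_i)\setminus\{(j,gxK_j)\}$. This holds because left multiplication by $g$ carries $xL/K_i$ bijectively onto $gxL/K_i$ and sends $(j,xK_j)$ to $(j,gxK_j)$. The map $\beta$ is exactly the $\beta$ of \cref{formula_for_composition_product} with $x = (j,xK_j)$: it extends $f$ by the value $\varphi(a)$ at $(j,xK_j)$.

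The only point needing care is the bookkeeping that the identifications are independent of the chosen coset representatives $x$ and $y$. They are, since $xL/K_i$ depends only on the coset $xL$ and is literally a subset of $G/K_i$. So I expect no real obstacle; the proof is an unwinding of definitions.
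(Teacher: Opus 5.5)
Your proposal is correct and follows essentially the same route as the paper. Both arguments specialize \cref{formula_for_composition_product} using the fiber computation $\psi^{-1}(yL)=\coprod_{i=1}^n yL/K_i$, rewrite the disjoint union over $X=\coprod_j G/K_j$ as $\coprod_{j}\bigsqcup_{xK_j\in G/K_j}$, and read off the actions and $\beta$. You also check two things the paper leaves implicit: that ${}^g h$ carries the punctured fiber at $(j,xK_j)$ to the one at $(j,gxK_j)$, and that the identifications do not depend on the choice of coset representatives.
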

\begin{proof}
    For a given $yL \in G/L$, the fiber
    $\psi^{-1}(yL)$ is given by $\coprod_{i=1}^{n} yL/K_i$
    since each $\psi_i$ is the canonical quotient map. 
    An application of \cref{formula_for_composition_product} 
    yields 
    \begin{equation*}
        \left[ \bigsqcup_{(j, xK_j) \in \coprod_{j=1}^{n} G/K_j}
        A \times \Fin\left(\left(\coprod_{i=1}^{n} xL/K_i \right)\setminus \{ (j, xK_j) \}, B\right)
        \to
        \bigsqcup_{yL \in G/L}\Fin\left(\coprod_{i=1}^{n} yL/K_i, B\right) \right].
    \end{equation*}
    Rewriting the domain as follows
    \begin{equation*}
        \left[ \coprod_{j=1}^{n} \bigsqcup_{xK_j \in G/K_j} 
        A \times \Fin\left(\left(\coprod_{i=1}^{n} xL/K_i \right) \setminus \{ (j, xK_j) \}, B\right) \to 
        \bigsqcup_{yL \in G/L} \Fin\left(\coprod_{i=1}^{n} yL/K_i, B\right) \right]
    \end{equation*}
    yields the desired equality. The descriptions of the $G$-actions and $\beta$ also come from \cref{formula_for_composition_product}.
\end{proof}

\subsection{Proving compatibility} We now begin our proof of \cref{chi_respects_composition} in the case where $[X\xrightarrow{\psi} Y]$ is as in \cref{composition_product_of_basis_elt_with_arb_elt}. We do this by computing the $[M]$-coordinate of both sides of the equation for $[M] \in \Sub(G)/G$ and showing that those are equal. 

We first compute the left-hand side of \cref{chi_respects_composition}, which involves computing the character of the composition product recorded in \cref{composition_product_of_basis_elt_with_arb_elt}. Recall the definition of the character map given in \cref{character_formula}. When evaluating the character of a specific element $[X \xrightarrow[]{\varphi} Y]$ of the Burnside--Tambara ring at the $[M]$- coordinate, the ingredients include the $M$-fixed points of the codomain $Y$ and fibers of the morphism $\varphi$ at these $M$-fixed points. The following lemmas compute each of these ingredients in the setting of \cref{composition_product_of_basis_elt_with_arb_elt}. 

\begin{lemma}
\label{J_fixed_pts_description}
    Let $\left[\coprod_{i=1}^{n} G/K_i \xrightarrow[]{\psi} G/L \right]$ and $[A \xrightarrow{\varphi} B]$ be as in \cref{composition_product_of_basis_elt_with_arb_elt} and 
        \begin{equation*}
        \left[ \coprod_{j=1}^{n} \bigsqcup_{xK_j \in G/K_j} 
        A \times \Fin\left(\left(\coprod_{i=1}^n xL/K_i \right) \setminus \{ (j,xK_j) \}, B\right) \xrightarrow{\beta}
        \bigsqcup_{yL \in G/L} \Fin\left(\coprod_{i=1}^{n} yL/K_i, B\right) \right]
    \end{equation*}
    be the composition product computed in the previous lemma. Then the $M$-fixed point set of the codomain is isomorphic to
    \begin{equation*}
        \bigsqcup_{\substack{yL \in (G/L) \\ y^{-1}My \subseteq L}} \Fin^M \left(\coprod_{i=1}^n yL/K_i, B\right)
        \cong \bigsqcup_{\substack{yL \in (G/L) \\ y^{-1}My \subseteq L}} \prod_{i=1}^{n} \prod_{MylK_i \in M\backslash yL / K_i} B^{ ({}^{yl} \! K_i) \cap M},
    \end{equation*}
    where $B^{({}^{yl} \! K_i) \cap M}$ denotes the $(({}^{yl} \! K_i) \cap M)$-fixed point set of $B$. 
\end{lemma}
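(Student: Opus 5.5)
The plan is to compute the $M$-fixed points in two stages: first reduce to the fixed points of the indexing set $G/L$, then compute the fixed points inside each remaining summand. The structural input is the $G$-action on the codomain from \cref{composition_product_of_basis_elt_with_arb_elt}, namely $g(yL,f) = (gyL, gf(g^{-1}-))$. For $m \in M$, a pair $(yL,f)$ is fixed exactly when two things hold. First, $myL = yL$, so the projection to the index set sends $M$-fixed points to $(G/L)^M$. Second, $f$ must agree with $mf(m^{-1}-)$, where both are regarded as functions on $\coprod_i yL/K_i = \coprod_i myL/K_i$.

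The first step is to identify $(G/L)^M$. We have $myL = yL$ for all $m \in M$ if and only if $y^{-1}My \subseteq L$. Over such a $yL$, the summand $\Fin(\coprod_i yL/K_i, B)$ is carried to itself by $M$. This is because $M$ acts on the fiber $\psi^{-1}(yL) = \coprod_i yL/K_i$ by left multiplication, since $m \cdot xK_i = mxK_i$ and $mxL = yL$. The induced action on the summand is conjugation of functions. The fixed points of a conjugation action on maps are the equivariant maps, so the $M$-fixed points over $yL$ are $\Fin^M(\coprod_i yL/K_i, B)$. This gives the first isomorphism. The disjointness of the union over $yL$ makes this bookkeeping clean: no fixed point can mix summands, because the projection to $G/L$ is equivariant.

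The second step uses two facts. An $M$-map out of a coproduct is a tuple of $M$-maps, and $\Fin^M(M/N, B) \cong B^N$ by evaluating at $eN$. So I decompose each $M$-set $yL/K_i$ into orbits with the double coset description already used in \cref{character_of_a_basis_element}. That description gives $yL/K_i \cong \coprod_{MylK_i} M/(({}^{yl}\!K_i)\cap M)$, with the orbit of $ylK_i$ having stabilizer $M \cap ylK_il^{-1}y^{-1}$. Combining these gives the product $\prod_i \prod_{MylK_i} B^{({}^{yl}\!K_i)\cap M}$.

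The only point requiring care is the stabilizer computation in the orbit decomposition. The stabilizer of $ylK_i$ in $M$ is $\{m : l^{-1}y^{-1}myl \in K_i\} = M \cap {}^{yl}\!K_i$. The double cosets should be read as $M\backslash yL/K_i$, that is, $M$-orbits on $yL/K_i$, which makes sense because $M$ preserves $yL$. The resulting isomorphism depends on a choice of orbit representatives $yl$, but this is harmless since only the isomorphism class is asserted.
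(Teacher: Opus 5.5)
Your proposal is correct and follows the paper's proof essentially step for step. Both arguments run the same way: identify $(G/L)^M$ via $y^{-1}My\subseteq L$, recognize the fixed points of the conjugation action on each summand as $\Fin^M(\coprod_i yL/K_i,B)$, then decompose $yL/K_i$ into $M$-orbits by the double coset formula and use $\Fin^M(M/N,B)\cong B^N$. Your explicit stabilizer check $\Stab_M(ylK_i)=M\cap {}^{yl}\!K_i$ and your remark that the isomorphism depends on a choice of orbit representatives match what the paper records; the latter is the non-canonicity it addresses in the notation right after the lemma.
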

\begin{proof}
    Note that an element $yL \in G/L$ is fixed by left-multiplication by $M$ if and only if $myL = yL$ for all $m \in M$, or in other words, $y^{-1}My \subseteq L$. This shows that
    \begin{equation*}
        \left( \bigsqcup_{yL \in G/L} \Fin\left(\coprod_{i=1}^{n} yL/K_i, B\right) \right)^{M} = 
        \bigsqcup_{\substack{yL \in (G/L) \\ y^{-1}My \subseteq L}}
        \left(\Fin\left(\coprod_{i=1}^{n} yL/K_i, B\right)\right)^{M}.
    \end{equation*}
    Now we focus our attention on one summand, i.e. let $y\in G$ such that $y^{-1}My \subseteq L$. This makes each $yL/K_i$ an $M$-set.
    The action of an element $m \in M$ on a function $f \colon \coprod_{i=1}^{n} yL/K_i \to B$ yields the function $mf(m^{-1} -)$. It
    follows that the $M$-fixed points of $\Fin\left( \coprod_{i=1}^{n} yL/K_i, B\right)$ are precisely the 
    $M$-equivariant maps and this provides an identification 
    \begin{equation*}
        \bigsqcup_{\substack{yL \in (G/L) \\ y^{-1}My \subseteq L}}
        \left(\Fin\left(\coprod_{i=1}^{n} yL/K_i, B\right)\right)^{M}
        = 
        \bigsqcup_{\substack{yL \in (G/L) \\ y^{-1}My \subseteq L}}
        \Fin^M\left(\coprod_{i=1}^{n} yL/K_i, B\right).
    \end{equation*}
     When $y M y^{-1} \subseteq L$, $yL/K_i$ as an $M$-set has an orbit decomposition given by the double coset formula
     \begin{equation*}
         yL/K_i \cong \coprod_{MylK_i \in M \backslash yL / K_i}
         M/(({}^{yl} \! K_i) \cap M).
     \end{equation*}
     This allows us to further expand our expression, yielding isomorphisms
     \begin{align*}
         \bigsqcup_{\substack{yL \in (G/L) \\ y^{-1}My \subseteq L}}
        \Fin^M\left(\coprod_{i=1}^{n} yL/K_i, B\right)
        &\cong \bigsqcup_{\substack{yL \in (G/L) \\ y^{-1}My \subseteq L}}
        \Fin^M\left(\coprod_{i=1}^{n} \coprod_{MylK_i \in M \backslash yL / K_i}
         M/(({}^{yl} \! K_i) \cap M), B\right) \\
        &\cong \bigsqcup_{\substack{yL \in (G/L) \\ y^{-1}My \subseteq L}} \prod_{i=1}^{n} \prod_{MylK_i \in M \backslash yL / K_i}
        \Fin^M\left( M/(({}^{yl} \! K_i) \cap M), B\right) \\
        &\cong \bigsqcup_{\substack{yL \in (G/L) \\ y^{-1}My \subseteq L}} \prod_{i=1}^{n} \prod_{MylK_i \in M \backslash yL / K_i} B^{ ({}^{yl} \! K_i) \cap M}
     \end{align*}
     as we claimed.
\end{proof}

\begin{notation} The iterated product in the conclusion of \cref{J_fixed_pts_description} recurs throughout our computation so to ease the visual burden we set 
    \begin{equation}
    \label{what_is_BA}
      \BA_y := \prod_{i=1}^{n} \prod_{M ylK_i \in M \backslash yL / K_i} B^{({}^{yl} \! K_i) \cap M},
    \end{equation}
    for $y\in G$. We may then write the set of $M$-fixed points in \cref{J_fixed_pts_description} as
    $$\bigsqcup_{\substack{yL\in G/L \\ y^{-1}My \subseteq L}} \BA_y.$$
    For an element $\overline{b} \in \BA_y$, we will denote its $(i,MylK_i)$-coordinate by
        $$b_{(i,MylK_i)} \in B^{({}^{yl} \! K_i)\cap M} \cong \Fin^M(M/({}^{yl}K_i \cap M), B).$$
\end{notation}

\begin{notation}
\label{notate_gamma_i_s}
Let $yL \in (G/L)^M$ be an element of the $M$-fixed points of $G/L$. For $1 \leq i \leq n$, the fiber $\psi_i^{-1}(yL) =yL/K_i$ is an $M$-set. We choose a set of generators for this $M$-set, i.e. we choose one representative from each $M$-orbit in this $M$-set. From that choice of generators, we can obtain a set function
    \begin{equation*}
       \gamma_i(l) \colon L \to M
    \end{equation*}
    such that for any given $l \in L$, the left coset $\gamma_i(l)ylK_i$ is the chosen generator in the $M$-orbit $MylK_i$. Note that the functions $\gamma_i$ are not uniquely determined, but we make a choice and fix it for the rest of this section.
\end{notation}

The reason for making the choice in \cref{notate_gamma_i_s} is that the isomorphism of \cref{J_fixed_pts_description} is not canonical, but fixing a choice of $M$-set generators for each $yL/K_i$ fixes the isomorphism
$$\mymacro_y \colon \BA_y \cong \Fin^M \left( \coprod_{i=1}^n yL/K_i, B\right)$$
given by
\begin{align*}
   \overline{b} &\mapsto ((i, ylK_i) \mapsto \gamma_i(l)^{-1} b_{i, MylK_i})
\end{align*}
i.e. sending $\overline{b}$ to the $M$-equivariant map which sends each chosen $M$-set generator $\gamma_i(l)ylK_i$ to $b_{i, MylK_i}$. All ensuing references to the isomorphism in \cref{J_fixed_pts_description} will use this specific isomorphism.
  
With these remarks out of the way, we are now in place to compute the fiber of a given $M$-fixed point as in \cref{J_fixed_pts_description} under $\beta$ as in \cref{composition_product_of_basis_elt_with_arb_elt}.

\begin{lemma}
\label{fiber_of_a_fixed_pt}
    Let $\beta$ be the map as in $\cref{composition_product_of_basis_elt_with_arb_elt}$. Let $zL\in G/L$ with $z^{-1}Mz \subseteq L$ and $\overline{b}\in \mathbb{A}_z$, so that $(zL, \mymacro_y(\overline{b})) $ is an element of the $M$-fixed point set computed in \cref{J_fixed_pts_description}. Then $\beta^{-1}(zL, \mymacro_y(\overline{b}))$ is isomorphic, as an $M$-set, to 
    \begin{equation*}
        \coprod_{j=1}^{n} \{(zlK_j, a) \in zL/K_j \times  A : a \in \varphi^{-1}(\gamma_j(l)^{-1} b_{(j, MzlK_j)})\}.
    \end{equation*}
    The action of $M$ on each summand of this coproduct is the diagonal action $m(zlK_j, a) = (mzlK_j, ma)$.
\end{lemma}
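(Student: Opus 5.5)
The plan is to compute the fiber $\beta^{-1}(zL,\mymacro_z(\overline{b}))$ directly from the explicit description of $\beta$ in \cref{composition_product_of_basis_elt_with_arb_elt}, and then read off its $M$-set structure. Write $f := \mymacro_z(\overline{b}) \colon \coprod_{i=1}^n zL/K_i \to B$. By construction $f$ is $M$-equivariant, and $f(j, zlK_j) = \gamma_j(l)^{-1} b_{(j,MzlK_j)}$. The well-definedness of this formula is part of the definition of $\mymacro_z$, using the fact that $b_{(j,MzlK_j)}$ is fixed by the stabilizer $({}^{zl}K_j)\cap M$.

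\emph{Step 1: describe the fiber as a set.} An element of the domain is a tuple $(j, xK_j, (a,h))$, with $h$ defined on $\left(\coprod_i xL/K_i\right)\setminus\{(j,xK_j)\}$. Its image under $\beta$ is $(xL, \tilde h)$. For this to equal $(zL, f)$ we need two things. First, $xL = zL$, so $xK_j$ is an element $zlK_j \in zL/K_j$. Second, $\tilde h = f$, which means $h$ is the restriction of $f$ to the complement of $(j, zlK_j)$, and $\varphi(a) = f(j, zlK_j) = \gamma_j(l)^{-1} b_{(j,MzlK_j)}$. Thus $h$ is uniquely determined by $(j, zlK_j)$. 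Forgetting $h$, the map $(j, zlK_j, (a,h)) \mapsto (j, (zlK_j, a))$ is therefore a bijection from the fiber onto the claimed coproduct. Its inverse sends $(zlK_j, a)$ to $(j, zlK_j, (a, f|))$, where $f|$ is the restriction of $f$.

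\emph{Step 2: equivariance.} Since $(zL, f)$ is $M$-fixed, the fiber is an $M$-set. By the action formula in \cref{composition_product_of_basis_elt_with_arb_elt}, $m \in M$ sends $(j, zlK_j, (a, f|))$ to $(j, mzlK_j, (ma, mf|(m^{-1}-)))$. The third component equals the restriction of $mf(m^{-1}-) = f$ to the complement of $(j, mzlK_j)$, because $f$ is $M$-equivariant. Hence under the bijection of Step 1 the action becomes $(zlK_j, a) \mapsto (mzlK_j, ma)$, which is the diagonal action.

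\emph{Consistency check.} We should verify that the target set is closed under this action, i.e. that $\varphi(ma) = f(j, mzlK_j)$. This holds because $\varphi$ is $G$-equivariant and $f$ is $M$-equivariant. This check is the only real subtlety: it is where the choice of the $\gamma_j$ in \cref{notate_gamma_i_s} must be seen to drop out. Expressing the condition as $\varphi(a) = f(j, zlK_j)$ rather than through the $\gamma$'s makes this transparent. I would note that the $\gamma_j(l)^{-1}$ form is merely the evaluation of $f$ via the definition of $\mymacro_z$.

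Finally, I would remark that the statement's $\mymacro_y$ should read $\mymacro_z$.
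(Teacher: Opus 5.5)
Your proposal is correct and follows essentially the same route as the paper. You read off the fiber from the formula for $\beta$, observe that the function coordinate is forced to be the restriction of $\mymacro_z(\overline{b})$ while $\varphi(a)$ must equal its value at the new point $(j, zlK_j)$, and transport the $M$-action by forgetting that redundant coordinate. Your explicit check that the forgotten coordinate transforms consistently (because $\mymacro_z(\overline{b})$ is $M$-equivariant), and your correction of $\mymacro_y$ to $\mymacro_z$, are welcome refinements of points the paper passes over quickly.
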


\begin{proof}
    Recall that $\beta$ is the map
    \begin{align*}
        \coprod_{j=1}^n \bigsqcup_{xK_j\in G/K_j} A \times \Fin\left(\left(\coprod_{i=1}^n xL/K_i\right) \setminus \{ (j,xK_j)\}, B\right) &\xrightarrow{\beta}\bigsqcup_{yL\in G/L} \Fin\left(\coprod_{i=1}^n yL/K_i, B\right) \\
        (j, xK_j, (a, f)) &\mapsto (xL, \tilde{f}),
    \end{align*}
    where $\tilde{f}$ is the same as $f$ but extended by defining
        $$\tilde{f}(j, xK_j) = \varphi(a).$$
    Then, $\beta^{-1}(zL, \mymacro_y(\overline{b}))$ consists of those $(j,xK_j, (a,f))$ for which $(xL, \tilde f)=(zL, \mymacro_y(\overline{b}))$. It follows that $xL$ agrees with  $zL$ and $\tilde{f}$ agrees with $\mymacro_y(\overline{b})$. The former condition is equivalent to $xK_j = zlK_j$ for some $l \in L$. Using the explicit description of $\mymacro_y$, the latter condition is
        $$\tilde{f}(i, ylK_i) = \gamma_i(l)^{-1} b_{(i, MylK_i)}$$
    for all $(i, ylK_i) \in \coprod_{i=1}^{n} yL/K_i$.
    Since $\tilde{f}$ is an extension of $f$, this completely determines $f$, making it redundant since we have fixed a fixed point $(zL, \mymacro_y(\overline{b}))$. However, it gives us an extra condition by evaluating at the new point $(j, xK_j)$:
        $$\varphi(a) = \gamma_j(l)^{-1} b_{(j, MzlK_j)}.$$
    Therefore, elements of $\beta^{-1}(yL, \mymacro_y(\overline{b}))$ are in bijective correspondence with triples $(j, zlK_j, a)$ with $l\in L$ and $a \in \varphi^{-1}(\gamma_j(l)^{-1} b_{j, MzlK_j})$. The $M$-action on these triples is the one on the domain of $\beta$ but ignoring the redundant coordinate $f$, so $m(j, zlK_j, a) = (j, mzlK_j, ma)$. Since $m$ does not act on the index $j$, we get the coproduct decomposition as claimed.
\end{proof}
We now put the previous two lemmas together in the following: 
\begin{lemma}
\label{character_of_composite}
    Let $\left[\coprod_{i=1}^{n} G/K_i \xrightarrow[]{\psi} G/L \right]$ and $[A \xrightarrow{\varphi} B]$ be as in \cref{composition_product_of_basis_elt_with_arb_elt}. Then the $[M]$-coordinate for the character of the composite
\begin{equation*}
    \chi\left[ \psi \circ_P \varphi\right]_{[M]}
\end{equation*}
is
\begin{equation*}
    \sum_{\substack{yL \in G/L \\ y^{-1}My \subseteq L}}  \sum_{\overline{b} \in \BA_y}
    \prod_{j=1}^{n} \left[\Tr^G_M \left(  \{(ylK_j, a) \in yL/K_j \times  A : a \in \varphi^{-1}(\gamma_j(l)^{-1} b_{(j, MylK_j)})\} \right)\right],
\end{equation*}
where $\BA_y$ is as in \cref{what_is_BA}. 
\end{lemma}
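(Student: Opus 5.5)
The plan is to apply \cref{character_formula} directly to the explicit composite of \cref{composition_product_of_basis_elt_with_arb_elt}, using \cref{J_fixed_pts_description} for the set of $M$-fixed points of the codomain and \cref{fiber_of_a_fixed_pt} for the fibers over them. By definition,
\[
\chi[\psi\circ_P\varphi]_{[M]} = \sum_{w \in (\text{codomain})^M} \left[\Tr^G_M\, \beta^{-1}(w)\right],
\]
where each fiber $\beta^{-1}(w)$ is regarded as an $M$-set and the sum is taken in $\Z[I_M]$. The argument then proceeds in three steps.

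First, I reindex the outer sum over $M$-fixed points. By \cref{J_fixed_pts_description}, these fixed points form the disjoint union over $yL\in G/L$ with $y^{-1}My\subseteq L$ of $\Fin^M(\coprod_i yL/K_i,B)$. Each of these sets is identified with $\BA_y$ through the fixed bijection $\mymacro_y$, which was built from the choices $\gamma_i$ of \cref{notate_gamma_i_s}. Because $\mymacro_y$ is a bijection, summing over the fixed points is the same as summing over pairs $(yL,\overline{b})$ with $\overline{b}\in\BA_y$. This produces the double sum in the statement. There is one small point to handle: the choice of coset representative $y$ must be fixed once for each $yL$ so that $\BA_y$ and $\gamma_i$ are well defined. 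I will say this explicitly, and I will also note that the final expression does not depend on that choice, since the value on each fixed point is intrinsic.

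Second, for a fixed pair $(yL,\overline{b})$, I use \cref{fiber_of_a_fixed_pt}. It gives an $M$-isomorphism between $\beta^{-1}(yL,\mymacro_y(\overline{b}))$ and the coproduct over $j=1,\dots,n$ of the $M$-sets $\{(ylK_j,a) : a\in\varphi^{-1}(\gamma_j(l)^{-1}b_{(j,MylK_j)})\}$, each carrying the diagonal action. Since $\Tr^G_M = G\times_M(-)$ preserves coproducts and isomorphisms, the class $[\Tr^G_M\beta^{-1}(\cdots)]$ equals $[\coprod_j \Tr^G_M(\text{$j$th summand})]$. I then apply \cref{coproducts_to_products}: the monoid ring $\Z[I_M]$ turns the coproduct into the product $\prod_{j=1}^n$ of the bracketed transfers. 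This gives exactly the summand in the statement.

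Third, I assemble the pieces, and the main bookkeeping obstacle sits here. I need to check that the $j$th summand really is an $M$-subset of $yL/K_j\times A$. The defining condition on $a$ refers to $l$ through $\gamma_j(l)$, so it only makes sense if it depends on the coset $ylK_j$ and not on the representative $l$. It must also be stable under $M$. I will verify this using $M$-equivariance of $\mymacro_y(\overline{b})$. The condition is equivalent to $\varphi(a)=\mymacro_y(\overline{b})(j,ylK_j)$. The right-hand side is a well-defined function of $ylK_j$, and it is $M$-equivariant, while $\varphi$ is $G$-equivariant. Together these show that the set is an $M$-set with the diagonal action. Once that is settled, the formula follows immediately.
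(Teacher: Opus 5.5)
Your proposal is correct and follows the paper's proof: apply \cref{character_formula}, reindex the $M$-fixed points of the codomain via \cref{J_fixed_pts_description} and the bijections $\Omega_y$, substitute the fiber description from \cref{fiber_of_a_fixed_pt}, and pull the coproduct through $\Tr^G_M$ using \cref{coproducts_to_products}. Your extra check that the $j$th summand is a well-defined $M$-set, using $\varphi(a)=\Omega_y(\overline{b})(j,ylK_j)$, spells out what the paper leaves implicit in \cref{fiber_of_a_fixed_pt}.
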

\begin{proof}
    By \cref{composition_product_of_basis_elt_with_arb_elt}, we need to compute
    \begin{align*}
         \chi \left( \left[ \coprod_{i=1}^{n} \bigsqcup_{xK_i \in G/K_i} 
        A \times \Fin(xL/K_i \setminus \{ xK_i \}, B) \to 
        \bigsqcup_{yL \in G/L} \Fin\left(\coprod_{i=1}^{n} yL/K_i, B\right)\right] \right)_{[M]}.\\
    \end{align*}
    Applying the formula for the character as defined in \cref{character_formula} along with the description of the $M$-fixed points of the codomain as $\coprod_{\substack{yL \in G/L \\ y^{-1}My \subseteq L}}\BA_y$ turns this into
    \begin{equation*}
        \sum_{\substack{yL \in G/L \\ y^{-1}My \subseteq L}}  \sum_{\overline{b} \in \BA_y} [\Tr^G_M (\beta^{-1}(yL, \mymacro_y(\overline{b})))]. 
    \end{equation*}
    Using the description of $\beta^{-1}(yL, \mymacro_y(\overline{b}))$ in \cref{fiber_of_a_fixed_pt}, we get
    \begin{equation*}
    \sum_{\substack{yL \in G/L \\ y^{-1}My \subseteq L}}  \sum_{\overline{b} \in  \BA_y} \left[\Tr^G_M \left( \coprod_{j=1}^{n} \{(ylK_j, a) \in yL/K_j \times  A : a \in \varphi^{-1}(\gamma_j(l)^{-1} b_{(j, MylK_j)})\} \right)\right],
    \end{equation*}
    which yields the desired expression after we pull the coproduct out past the transfer and use \cref{coproducts_to_products}.
\end{proof}

We further simplify this expression by identifying the $M$-set that appears in it as the transfer of an $(({}^{yl}\!K_j) \cap M)$-set.
\begin{lemma}
\label{technical_lemma}
    Let $\left[\coprod_{i=1}^{n} G/K_i \xrightarrow[]{\psi} G/L \right]$ and $[A \xrightarrow{\varphi} B]$ be as in \cref{composition_product_of_basis_elt_with_arb_elt}. Let $1 \leq j \leq n$,
$yL \in (G/L)^M$, and $\overline{b} \in \BA_y$. Then the $M$-set 
 \begin{equation*} 
     \{(ylK_j, a) \in yL/K_j \times A: a \in 
     \varphi^{-1}(\gamma_j(l)^{-1} b_{(j, MylK_j)})\}     
     \end{equation*}
     is isomorphic to the $M$-set
     \begin{equation*}
     \coprod_{MylK_j \in 
     M \backslash yL / K_j} \Tr^M_{({}^{yl}\!K_j) \cap M} ( \varphi^{-1}(b_{(j, Myl K_j)})).
 \end{equation*}      
\end{lemma}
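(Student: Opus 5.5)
The plan is to decompose the $M$-set along the first coordinate and identify each piece as an induced set. Write
\[
S = \{(ylK_j, a) \in yL/K_j \times A : a \in \varphi^{-1}(\gamma_j(l)^{-1} b_{(j, MylK_j)})\}.
\]
The projection $S \to yL/K_j$, $(ylK_j,a)\mapsto ylK_j$, is $M$-equivariant. This uses that $\gamma_j(ml)^{-1}b$ transforms correctly: the element $\gamma_j(l)^{-1} b_{(j,MylK_j)}$ is exactly $\mymacro_y(\overline{b})(j, ylK_j)$, and $\mymacro_y(\overline{b})$ is $M$-equivariant. Hence $S$ splits as a coproduct over the $M$-orbits of $yL/K_j$, that is, over the double cosets $MylK_j \in M\backslash yL/K_j$. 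It therefore suffices to identify the preimage of a single orbit $M\cdot \gamma_j(l)ylK_j$ with $\Tr^M_{({}^{yl}\!K_j)\cap M}(\varphi^{-1}(b_{(j,MylK_j)}))$.

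The key step is the standard fact that an $M$-set $E$ over a transitive $M$-set $M/S$ is isomorphic to $M\times_S E_0$, where $E_0$ is the fiber over the base point, regarded as an $S$-set. Choose the base point of the orbit to be the chosen generator $c=\gamma_j(l)ylK_j$. Replacing $l$ by a representative with $\gamma_j(l)=e$, which is allowed since $\gamma_j$ only depends on the orbit choice, the stabilizer of $c$ in $M$ is $({}^{yl}\!K_j)\cap M$. The fiber over $c$ is $\{c\}\times \varphi^{-1}(b_{(j,MylK_j)})$, since at the generator the twisting factor $\gamma_j^{-1}$ is trivial. As an $({}^{yl}\!K_j)\cap M$-set this fiber is $\varphi^{-1}(b_{(j,MylK_j)})$. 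This set is stable under the stabilizer because $b_{(j,MylK_j)}\in B^{({}^{yl}\!K_j)\cap M}$ and $\varphi$ is equivariant.

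To make the isomorphism explicit, send $[m, a]\in M\times_{({}^{yl}\!K_j)\cap M}\varphi^{-1}(b)$ to $(m\,c, ma)$. Well-definedness and injectivity follow from the stabilizer computation. To see that the image lands in $S$, note that $\varphi(ma)=mb=\mymacro_y(\overline{b})(j,mc)$ by equivariance of $\mymacro_y(\overline{b})$. For surjectivity, any $(x,a')\in S$ over the orbit has $x=mc$, and then $m^{-1}a'\in\varphi^{-1}(b)$.

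The main obstacle is bookkeeping with the choice function $\gamma_j$ from \cref{notate_gamma_i_s}. One must check carefully that the condition defining $S$ at a general point $ylK_j$ (not the generator) equals $\varphi(a)=\mymacro_y(\overline{b})(j,ylK_j)$. That rewriting is what makes $S$ genuinely $M$-stable and the map above land in $S$. Once this is settled, the rest is the routine induction isomorphism.
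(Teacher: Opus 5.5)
Your proposal is correct and follows essentially the same route as the paper: split the $M$-set over the double cosets $MylK_j$, take $l$ so that $ylK_j$ is the chosen generator, and identify each piece with $M\times_{({}^{yl}\!K_j)\cap M}\varphi^{-1}(b_{(j,MylK_j)})$ via $[m,a]\mapsto (mylK_j, ma)$. This is exactly the paper's inverse map $\zeta$; the paper also writes down the forward map $(mylK_j,a)\mapsto (m,m^{-1}a)$. One small correction: at the generator the definition only guarantees $\gamma_j(l)\in({}^{yl}\!K_j)\cap M$, not $\gamma_j(l)=e$, but this suffices because $b_{(j,MylK_j)}$ is fixed by that subgroup, which is how the paper argues.
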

\begin{proof}
    Consider the $M$-set
    \begin{equation*}
        \{(ylK_j, a) \in yL/K_j \times A: a \in 
     \varphi^{-1}(\gamma_j(l)^{-1} b_{(j, MylK_j)})\}.
    \end{equation*}
    Using the orbit decomposition of $yL/K_j$ as an $M$-set, this decomposes into 
    \begin{equation*}
        \coprod_{MylK_j \in M \backslash yL / K_j} \{ (mylK_j, a) \in M yl/K_j \times A: a \in 
     \varphi^{-1}(\gamma_j(l)^{-1} b_{(j, MylK_j)})\}.
    \end{equation*}
    Therefore, it suffices to produce an isomorphism
    \begin{equation*}
        \{ (mylK_j, a) \in Myl/K_j \times A: a \in \varphi^{-1}(\gamma_j(l)^{-1} b_{(j, MylK_j)})\}
        \cong
        \Tr^M_{({}^{yl}\!K_j) \cap M} ( \varphi^{-1}(b_{(j, Myl K_j)}))
    \end{equation*}
    for a fixed double coset $MylK_j$. In other words, we are focusing our attention on one $M$-orbit of $yL/K_j$, namely $Myl/K_j$. Without loss of generality, we may assume that $l$ is such that $ylK_j$ is the chosen generator of the orbit $Myl/K_j$, so that $ylK_j = \gamma_j(l)ylK_j$. This implies that $\gamma_j(l)\in {}^{yl}\!K_j$, so $\gamma_j(l)^{-1}b_{(j, MylK_j)} = b_{(j, MylK_j)}$. Now, we construct an explicit $M$-equivariant isomorphism from the left-hand side to the right-hand side using the definition of the transfer,
    \begin{align*}
        \Phi \colon \{ (mylK_j, a) \in Myl/K_j \times A: a \in 
     \varphi^{-1}(b_{(j, MylK_j)})\}
     &\to 
      M {\displaystyle{\times_{({}^{yl}\!K_j) \cap M}}} ( \varphi^{-1}(b_{(j, Myl K_j)}) )\\
      (mylK_j, a) &\mapsto (m, m^{-1}a).
    \end{align*}
    To see that this map is well defined, let $m_1ylK_j = m_2ylK_j$ so that $m_1^{-1}m_2 \in ({}^{yl} \! K_j) \cap M$. It follows that 
    \begin{align*}
        \Phi(m_2ylK_j, a) &= (m_2, m_2^{-1}a) \\
        &= (m_1m_1^{-1}m_2, m_2^{-1}a) \\
        &= (m_1, m_1^{-1}m_2m_2^{-1}a) \\
        &= (m_1, m_1^{-1}a) \\
        &= \Phi(m_1ylK_j, a).
    \end{align*}
    It remains to show that the map is $M$-equivariant. To see that, let $\tilde{m} \in M$ and note that 
    \begin{equation*}
        \tilde{m} \Phi(mylK_j, a) 
        = \tilde{m} (m, m^{-1}a)
        = (\tilde{m}m, m^{-1}a),
    \end{equation*}
    whereas
    \begin{equation*}
        \Phi(\tilde{m}mylK_j, \tilde{m}a) = 
        (\tilde{m}mylK_j, (\tilde{m}m)^{-1}\tilde{m}a) = (\tilde{m}m, m^{-1}a).
    \end{equation*}
    This demonstrates the desired $M$-equivariance. We see that this assignment is an isomorphism because it admits an inverse: 
    \begin{align*}
        \zeta \colon M \times_{({}^{yl}\!K_j) \cap M} \left( \varphi^{-1}(b_{(j, Myl K_j)}) \right) &\to \{ (mylK_j, a) \in Myl/K_j \times A: a \in 
     \varphi^{-1}(b_{(j, MylK_j)})\}\\
      (m, a) &\mapsto (mylK_j, ma). 
    \end{align*}
    To see that this map is well-defined let 
    $c = ylkl^{-1}y^{-1} \in ({}^{yl} \! K_j) \cap M$. We know that 
    \begin{equation*}
      (mylkl^{-1}y^{-1}, a) \mapsto (mylK_j, mylkl^{-1}y^{-1}a),   
    \end{equation*}
    while on the other hand
    \begin{equation*}
        (m, ylkl^{-1}y^{-1}a) \mapsto (mylK_j, mylkl^{-1}y^{-1}a).
    \end{equation*}
\end{proof}

Putting everything together, we have computed every coordinate of the left-hand side of \cref{chi_respects_composition}:
\begin{equation}
\label{M_coord_of_char_of_composite}
\chi[\psi \circ_P \varphi]_{[M]} = \sum_{\substack{yL \in G/L \\ y^{-1}My \subseteq L}}  \sum_{\overline{b} \in \BA_y}
    \prod_{j=1}^{n}\prod_{MylK_j \in 
     M \backslash yL / K_j} \left[\Tr^G_{({}^{yl}\!K_j) \cap M}  ( \varphi^{-1}(b_{(j, Myl K_j)}))\right].
\end{equation}
We now compute the coordinates of the right-hand side of \cref{chi_respects_composition}.

\begin{lemma}
\label{RHS_of_compatibility_of_chi}
    Let $\left[\coprod_{i=1}^{n} G/K_i \xrightarrow[]{\psi} G/L \right]$ and $[A \xrightarrow{\varphi} B]$ be as in \cref{composition_product_of_basis_elt_with_arb_elt}. Then the $[M]$-coordinate for the composite of the characters is
    \begin{align*}
        (\chi[\psi]\circ_{B} \chi[\varphi])_{[M]} =
        \sum_{\substack{yL \in G/L \\ y^{-1}My \subseteq L}}  \sum_{\overline{b} \in \BA_y}
        \prod_{j=1}^{n}\prod_{MylK_j \in 
         M \backslash yL / K_j} \left[\Tr^G_{({}^{yl}\!K_j) \cap M}  ( \varphi^{-1}(b_{(j, Myl K_j)}))\right],
    \end{align*}
    where $\BA_y$ is as in \cref{what_is_BA}.
\end{lemma}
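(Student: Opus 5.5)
Start from the explicit character of the basis element computed in \cref{character_of_a_basis_element}:
\[
\chi[\psi]_{[M]} = \sum_{\substack{yL \in G/L \\ y^{-1}My \subseteq L}}\prod_{j=1}^{n} \prod_{MylK_j \in M \backslash yL / K_j}x_{[({}^{yl} \! K_j) \cap M]},
\]
viewed as a polynomial in the $[M]$-factor. By \cref{plethysm_depends_only_on_the_relevant_coordinate}, $(\chi[\psi]\circ_B \chi[\varphi])_{[M]}$ depends only on $(\chi[\psi]_{[M]})^{[M]}$. Since $(-)\circ_B \chi[\varphi]$ is a ring map, it suffices to compute $(x_{[N]})^{[M]}\circ_B\chi[\varphi]$ for each $N = ({}^{yl}K_j)\cap M \subseteq M$. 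Then take the same sums and products. By \cref{composing_with_arbitrary_elements_in_barks}, this equals $(\chi[\varphi]_{[N]})^{[M]}$, where $\chi[\varphi]_{[N]}\in\Z[I_N]\subseteq \Z[I_M]$.

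Next, expand $\chi[\varphi]_{[N]} = \sum_{b\in B^{N}} [\Tr^G_N \varphi^{-1}(b)]$ by \cref{character_formula}, using $N$ itself as the representative; the result is independent of this choice by the well-definedness argument. Substituting gives
\[
(\chi[\psi]\circ_B\chi[\varphi])_{[M]} = \sum_{\substack{yL\in G/L\\ y^{-1}My\subseteq L}} \prod_{j=1}^n\prod_{MylK_j} \sum_{b\in B^{({}^{yl}K_j)\cap M}} \left[\Tr^G_{({}^{yl}K_j)\cap M}\varphi^{-1}(b)\right].
\]
Distribute the product over the inner sums. A choice of one $b$ for each index $(j, MylK_j)$ is exactly an element $\overline{b}\in\BA_y$ by \cref{what_is_BA}. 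This turns $\prod\sum$ into $\sum_{\overline b\in \BA_y}\prod$, which is the claimed formula.

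The main point needing care is consistency of representatives. The double coset representatives $yl$ used in $\BA_y$ must be the same ones, namely the chosen generators from \cref{notate_gamma_i_s}, that index the monomial factors $x_{[({}^{yl}K_j)\cap M]}$. This is already fixed by \cref{character_of_a_basis_element}, since each factor depends only on the double coset up to conjugacy. One must also check that $x_{[N]}$ in the $[M]$-factor is interpreted via the inclusion $I_N\subseteq I_M$, so that the plethysm formula is applied with $q=[N]\le p=[M]$. Once this is in place, the computation reduces to the distributive law.
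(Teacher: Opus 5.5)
Your proposal is correct and follows the paper's proof essentially step for step: reduce to the $[M]$-coordinate via \cref{plethysm_depends_only_on_the_relevant_coordinate}, substitute the character of the basis element from \cref{character_of_a_basis_element}, use that $(-)\circ_B\chi[\varphi]$ is a ring map together with \cref{composing_with_arbitrary_elements_in_barks} to replace each $x_{[({}^{yl}K_j)\cap M]}$ by $\chi[\varphi]_{[({}^{yl}K_j)\cap M]}$, expand by the definition of $\chi$, and distribute the products over the sums to obtain the index $\overline{b}\in\BA_y$. Your additional remarks, that the same double coset representatives must be used on both sides and that the inclusion $I_N\subseteq I_M$ must be respected, cover points the paper leaves implicit, and you handle them correctly.
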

\begin{proof}
    By \cref{plethysm_depends_only_on_the_relevant_coordinate},
    \begin{equation}
    \label{coordinate_reduction}
    \left( \chi[\psi] \circ_{B} \chi[\varphi] \right)_{[M]} = \left( (\chi[\psi]_{[M]})^{[M]} \circ_{B} \chi[\varphi] \right)_{[M]}.
    \end{equation}
    In \cref{character_of_a_basis_element}, we computed
    \begin{align*}
        \chi[\psi]_{[M]} &= \sum_{\substack{yL \in G/L \\ y^{-1}My \subseteq L}}\prod_{i=1}^{n} \prod_{MylK_i \in M \backslash yL / K_i}x_{[({}^{yl} \! K_i) \cap M]}.
    \end{align*}
    Plugging this expression into the right-hand side of \cref{coordinate_reduction}, we get
    \begin{align*}
        \left( \chi[\psi] \circ_{B} \chi[\varphi] \right)_{[M]}
        &= \left(\left(\sum_{\substack{yL \in G/L \\ y^{-1}My \subseteq L}}\prod_{i=1}^{n} \prod_{MylK_i \in M \backslash yL / K_i}x_{[({}^{yl} \! K_i) \cap M]}\right)^{[M]} \circ \chi[\varphi]\right)_{[M]}\\
        &= \sum_{\substack{yL \in G/L \\ y^{-1}My \subseteq L}}\prod_{i=1}^{n} \prod_{MylK_i \in M \backslash yL / K_i}\left(x_{[({}^{yl} \! K_i) \cap M]}^{[M]} \circ \chi[\varphi]\right)_{[M]}\\
        &= \sum_{\substack{yL \in G/L \\ y^{-1}My \subseteq L}}\prod_{i=1}^{n} \prod_{MylK_i \in M \backslash yL / K_i}\chi[\varphi]_{[({}^{yl} \! K_i) \cap M]}\\
        &= \sum_{\substack{yL \in G/L \\ y^{-1}My \subseteq L}}\prod_{i=1}^{n} \prod_{MylK_i \in M \backslash yL / K_i}\sum_{b\in B^{({}^{yl}\!K_i) \cap M}} [\Tr_{({}^{yl}\!K_i) \cap M}^G (\varphi^{-1}(b))].\\
    \end{align*}
    Here, we first leveraged that $(-)^{[M]}$ and $(-\circ \chi[\varphi])$ are both rings maps, then the definition of $\circ_B$, and then the definition of $\chi$. Finally, distributing the two products over the innermost sum, we get
    $$\sum_{\substack{yL \in G/L \\ y^{-1}My \subseteq L}}  \sum_{\overline{b} \in \BA_y}
    \prod_{j=1}^{n}\prod_{MylK_j \in 
     M \backslash yL / K_j} \left[\Tr^G_{({}^{yl}\!K_j) \cap M}  ( \varphi^{-1}(b_{(j, Myl K_j)}))\right].$$
\end{proof}

\begin{theorem}
\label{compatibility_with_composition_in_barks}
    Let $[X \to Y]$ and $[A \to B]$ be two elements of $P^+(G)$.
    Then 
    \begin{equation*}
        \chi([X \to Y] \circ_P [A \to B]) = \chi([X \to Y]) \circ_B \chi([A \to B]).
    \end{equation*}
\end{theorem}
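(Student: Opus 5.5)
The plan is to reduce to basis elements and then compare the two coordinate formulas already computed. Both sides of the desired identity are additive in the first variable. On the left, $(-)\circ_P[A\to B]$ is an $A(G)$-algebra map by \cref{algebra_map_in_first_coord}, and $\chi$ is additive by \cref{prop:charmap}. On the right, $(-)\circ_B f$ is a ring map for every $f\in\Barks(G)$, since $\Barks(G)$ is a plethory by \cref{plethysm_in_barks}. So it suffices to treat $[X\to Y]$ ranging over the additive basis of \cref{basis_for_burnside_tambara_ring}. By \cref{basis_elements_are_canonical_quotients}, we may further assume $[X\to Y]=\left[\coprod_{i=1}^n G/K_i\xrightarrow{\psi}G/L\right]$ with each $K_i\subseteq L$ and each $\psi_i$ the canonical quotient map. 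This is exactly the setting of \cref{composition_product_of_basis_elt_with_arb_elt}.

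Next, since $\Barks(G)=\prod_{[M]}\Z[I_M]$, equality can be checked one coordinate at a time. Fix $[M]\in\Sub(G)/G$ and a representative $M$. Fix also, once and for all, the choices of orbit representatives $\gamma_j$ from \cref{notate_gamma_i_s}; these determine the identifications $\mymacro_y$ of the $M$-fixed points of the codomain. The left-hand side coordinate $\chi[\psi\circ_P\varphi]_{[M]}$ is then given by \eqref{M_coord_of_char_of_composite}. That formula is obtained by combining \cref{J_fixed_pts_description}, \cref{fiber_of_a_fixed_pt}, \cref{character_of_composite} and \cref{technical_lemma}, together with transitivity of transfer, $\Tr^G_M\Tr^M_{({}^{yl}\!K_j)\cap M}=\Tr^G_{({}^{yl}\!K_j)\cap M}$, and the rule \eqref{coproducts_to_products} that turns coproducts into products in the monoid ring. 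The right-hand side coordinate is computed in \cref{RHS_of_compatibility_of_chi}, and it produces literally the same expression. Hence the two $[M]$-coordinates agree for every $[M]$, which proves the theorem.

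The only point needing care is that the two expressions are indexed consistently. In both, the outer sum runs over $yL\in(G/L)^M$, and the inner sum runs over $\overline b\in\BA_y$, that is, over families $b_{(j,MylK_j)}\in B^{({}^{yl}\!K_j)\cap M}$. In \cref{RHS_of_compatibility_of_chi} this inner sum arises by distributing products over sums. In the left-hand computation it arises from the fixed-point description of \cref{J_fixed_pts_description}, under the same choice of double coset representatives $yl$.

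I expect the main obstacle to be that the left-hand side is independent of the choices made (of $\gamma_j$ and of coset representatives), whereas individual terms of the expression are not. This is resolved by the fact that the transfer $[\Tr^G_{({}^{yl}\!K_j)\cap M}(\cdot)]$ takes values in $A^+(G)$, so it is insensitive to conjugating the subgroup together with the set. Concretely, replacing $yl$ by $m\,yl\,k$ replaces $b$ by $m\,b$ and $\varphi^{-1}(b)$ by $m\,\varphi^{-1}(b)$, and this gives isomorphic induced $G$-sets. This is the same well-definedness argument used after \cref{character_formula}.
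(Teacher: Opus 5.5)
Your proposal is correct and follows the paper's proof. The paper likewise uses additivity of both composition products in the first variable to reduce to the canonical-quotient basis elements $\left[\coprod_i G/K_i \xrightarrow{\psi} G/L\right]$, and then matches the $[M]$-coordinates given by \eqref{M_coord_of_char_of_composite} and \cref{RHS_of_compatibility_of_chi}. Your extra discussion of independence of choices is harmless but not needed: both sides are computed with the same fixed choices of $\gamma_j$ and double coset representatives, so the two expressions are literally identical.
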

\begin{proof}
    If $[X \to Y]$ is of the form $[\coprod_i G/K_i \xrightarrow{\psi}G/L]$ where each $K_i \subseteq L$ and $\psi$ is the sum of the canonical quotient maps, we see that this equation is true coordinate-wise: the left-hand side is given in \cref{M_coord_of_char_of_composite} and the right-hand side is calculated in \cref{RHS_of_compatibility_of_chi}. These bispans form an additive basis for $P(G)$. Since the character map is additive and both composition products $\circ_P$ and $\circ_B$ are additive in the first coordinate (\cref{algebra_map_in_first_coord}), the equality holds in $P(G)$.
\end{proof}


\section{Cooperations on the Burnside--Tambara Ring} 
In this section, we show that, for a prime $p$, 
$P(C_p)$ is an $A(C_p)$-plethory. We also discuss some consequences of \cref{compatibility_with_composition_in_barks}: we show that $P(C_4)$ does not admit the structure of an
$A(C_4)$-plethory compatible with composition of bispans and that, for $G$ Dedekind, $\Q \otimes P(G)$ is a $\Q \otimes A(G)$-plethory.

\begin{proposition} \label{Cpplethory}
Let $p$ be a prime number. Then the Burnside--Tambara ring $P(C_p)$ admits the structure of an $A(C_p)$-plethory compatibly with the composition of bispans. 
\end{proposition}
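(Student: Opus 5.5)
The plan is to build the plethory structure on $P(C_p)$ by transporting it along the character map. Since $C_p$ is Dedekind, \cref{character_map_rational_iso} says $\chi \colon P(C_p) \to \Barks(C_p)$ is injective. By \cref{compatibility_with_composition_in_barks}, $\chi$ carries the composition of bispans to the plethysm of \cref{plethysm_in_barks}. So $P(C_p)$ is a subring of $\Barks(C_p)$ closed under $\circ$, and the plethysm on $P(C_p)$ should be the composition of bispans.

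What remains is to give $P(C_p)$ an $A(C_p)$-biring structure lifting the one on $\Barks(C_p)$ from \cref{cooperations in barks}. I would work with the presentation $P(C_p) \cong A(C_p)[x,n]/(tx^p - tn)$ of \cref{BurnsideTambaraRingOfC2} and the characters $\chi(x) = (x_e, x_{C_p})$, $\chi(n) = (x_e^p, x_e)$ of \cref{ex:cpcharacter}. I define $A(C_p)$-algebra maps $P \to P\otimes_{A(C_p)} P$ on generators by
\[
\Delta^+(x) = x\otimes 1 + 1\otimes x,\qquad \Delta^\times(x) = x\otimes x,\qquad \Delta^\times(n) = n\otimes n,
\]
\[
\Delta^+(n) = n\otimes 1 + 1\otimes n + t\sum_{0<k<p}\tfrac{1}{p}\tbinom{p}{k}\, x^k\otimes x^{p-k}.
\]
The last formula is forced by the characters. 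At the $e$-factor, $(x_e\otimes 1+1\otimes x_e)^p - x_e^p\otimes 1 - 1\otimes x_e^p$ is $p$ times an integral polynomial, and $\chi(t) = (p,0)$. Hence the correction term has character $\bigl((x_e\otimes1+1\otimes x_e)^p - x_e^p\otimes 1 - 1\otimes x_e^p,\ 0\bigr)$, while the $C_p$-factor is additive in $x_e$.

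For the counits I take $\varepsilon^+(x)=\varepsilon^+(n)=0$ and $\varepsilon^\times(x)=\varepsilon^\times(n)=1$. For the co-$A(C_p)$-linear structure, $a \in A(C_p)$ gives the map $x\mapsto a$, $n\mapsto N_e^{C_p}\operatorname{res}^{C_p}_e(a)$, using the Tambara norm. Its marks are $(a_e^p, a_e)$, matching the coscalar map on $\Barks$.

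The key checks are that these formulas respect the relation $tx^p = tn$. For $\Delta^\times$ this is immediate. For $\Delta^+$ I expand $t(x\otimes 1+1\otimes x)^p - t\Delta^+(n)$ and use $t^2 = pt$. The correction term then becomes $p t \cdot \frac1p\sum\binom pk x^k\otimes x^{p-k}$, which cancels the cross terms, leaving zero. For the coscalar map, the needed identity $t a^p = t N(a)$ is checked on marks, since both sides vanish at $C_p$ and agree at $e$.

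Next come the biring axioms (coassociativity, cocommutativity, distributivity of $\Delta^\times$ over $\Delta^+$, the counit laws) and the plethory compatibilities of \cite[Section 1.3]{BorgerWieland2005}. An example of the latter is $f\circ(a+b) = \sum f^+_{(1)}\circ a\cdot f^+_{(2)}\circ b$. I would verify the biring axioms directly on the generators $x,n$ in tensor powers of $P$. This avoids needing injectivity of $P\otimes_A P \to \Barks\otimes_{\Marks}\Barks$, which may fail because of torsion.

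For the plethory relations, both sides are elements of $P(C_p)$ itself, so it suffices to compare them after applying the injective map $\chi$. There the relations hold because $\Barks(C_p)$ is a plethory, $\chi$ intertwines $\circ$, and $(\chi\otimes\chi)\Delta = \Delta\chi$ by construction. Associativity and the unit $x$ of $\circ$ follow the same way, or directly from associativity of bispan composition.

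I expect the main obstacle to be the coaddition on $n$: checking it is well defined modulo $tx^p - tn$, and verifying coassociativity and the distributive law involving the correction term. These hold only thanks to $t^2 = pt$ and the integrality of $\frac1p\binom pk$, which is exactly where primality of $p$ enters. This step is what fails in general, for example for $C_4$.
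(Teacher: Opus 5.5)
Your biring is the paper's, written as co-operations instead of on the functor of points $CR=\{(a,b)\in R^2 : tb=ta^p\}$. Your $\Delta^+(n)$ is dual to the paper's addition $\bigl(a+c,\ b+d+\tfrac{t}{p}\sum_{0<i<p}\binom{p}{i}a^ic^{p-i}\bigr)$, your coscalar $n\mapsto N_e^{C_p}\mathrm{res}^{C_p}_e(a)$ is a conceptual form of the paper's $t\mapsto (t,\,p+(p^{p-1}-1)t)$, and your well-definedness checks are correct.

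\textbf{The gap is in the plethory step.} A plethory requires the composition $P\odot_{A(C_p)}P\to P$ to be a map of \emph{birings}. Equivalently, the comonad comultiplication $\delta_R\colon C(R)\to C(C(R))$ must be an $A(C_p)$-algebra homomorphism, not merely a natural map of sets. The paper checks exactly this for its explicit $\delta_R(a,b)=((a,b),(b,b^p))$, which encodes $x\circ x=x$, $x\circ n=n\circ x=n$ and $n\circ n=n^p$.

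What you propose to check only makes $\circ$ an associative, unital algebra map out of $P\odot P$. That list is the $\odot$-relations such as $f\circ(a+b)=\sum f^+_{(1)}\circ a\cdot f^+_{(2)}\circ b$, the coscalar relation, associativity and the unit. The missing conditions say that $\circ$ commutes with $\Delta^+$ and $\Delta^\times$, for the induced biring structure on $P\odot P$. These are identities in $P\otimes_{A(C_p)}P$, not in $P$, so ``compare after applying the injective $\chi$'' does not reach them. Associativity plus injectivity of $\chi$ gives them only after applying each evaluation $P\otimes P\to P$, $f\otimes g\mapsto (f\circ a)(g\circ b)$. Concluding from that needs exactly the injectivity on $P\otimes_{A(C_p)}P$ that you chose not to assume.

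\textbf{Either of two repairs works.} First, you can check directly, as the paper does, that $\delta_R(a,b)=((a,b),(b,b^p))$ is additive, multiplicative and $A(C_p)$-linear, using $tb=ta^p$ and $t^2=pt$. Second, you can drop the torsion worry, which is unfounded here:
\begin{itemize}
\item Putting $m=n-x^p$ gives $P(C_p)\cong A[x,m]/(tm)$, so $P\otimes_A P\cong A[x,m,x',m']/(tm,tm')$.
\item As an abelian group this is a direct sum of copies of $A$ and $A/tA\cong\Z$, hence free.
\item $\chi\otimes\chi$ is a rational isomorphism by \cref{classicalmarks,character_map_rational_iso}, so it is injective.
\end{itemize}
Then every identity, including the biring axioms and the biring-map property of $\circ$, can be verified in $\Barks(C_p)$.

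Two smaller points. You also need the coantipode $\sigma(x)=-x$, $\sigma(n)=-n-\tfrac{t}{p}\sum_{0<k<p}\binom{p}{k}x^k(-x)^{p-k}$. It is needed both for the biring and to define $f\circ g$ for virtual $g$, where bispan composition itself is not defined. Also, your closing remark misplaces the $C_4$ failure: in \cref{C4notplethory} the obstruction is to $\Delta^\times(s)$, which would have to be $\tfrac12\, s\otimes s$, not to a coaddition.
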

\begin{proof}
    We use the explicit presentations of $A(C_p)$ and $P(C_p)$:
        $$A(C_p) \cong \frac{\mathbb Z[t]}{(t^2 - pt)}$$
        $$P(C_p) \cong \frac{A(C_p)[x,n]}{(tn - tx^p)},$$
    with notation as in \cref{big_burnside_ring_Cp_example_computation}.
    Let $C := \Alg_{A(C_p)}(P(C_p), -)$ be the functor corepresented by $P(C_p)$. The above presentation gives us a natural isomorphism
    \begin{equation*}
       CR \cong \{(a,b) \in R^2 : tb = ta^p\}, 
    \end{equation*}
    where $R$ is an arbitrary $A(C_p)$-algebra.
    To give an $A(C_p)$-biring structure on $P(C_p)$ is equivalent to giving a natural $A(C_p)$-algebra structure to the above set.
    
    Given pairs $(a,b), (c,d) \in CR$, their sum is given by 
    \begin{equation*}
        \left(a+c,b+d+\frac{t}{p}\sum_{i=1}^{p-1} \binom{p}{i}a^ic^{p-i} \right).
    \end{equation*}
    Note that the sum in the second coordinate is always divisible by $p$. The additive inverse of any pair $(a,b)$ is given by 
    \[ \left( -a, -b-\frac{t}{p} \sum_{i=1}^{p-1} \binom{p}{i} a^i(-a)^{p-i} \right). \]
    The multiplication on $CR$ is given by coordinate-wise multiplication.

    The $A(C_p)$-algebra structure on $CR$ is given by the ring map
    $A(C_p) \rightarrow CR$ determined by 
    \begin{equation*}
     0 \mapsto (0,0), \, 1 \mapsto (1,1), \, \text{ and } t \mapsto (t,p+(p^{p-1}-1)t).    
    \end{equation*}
    This shows that $P(C_p)$ is an $A(C_p)$-biring. In fact, this biring structure is the restriction of the $\Marks(C_p)$-biring structure on $\Barks(C_p)$; the cooperations on $\Barks(C_p)$, when restricted to the image of $\chi$, happen to land in the image of $\chi\otimes \chi: P(C_p)\otimes_{A(C_p)} P(C_p) \to \Barks(C_p) \otimes_{\Marks(C_p)} \Barks(C_p)$.

    To see that $P(C_p)$ is an $A(C_p)$-plethory, we verify that $C$ carries the structure of a comonad. The counit is given by the projection map
    \[ CR \twoheadrightarrow R \]
    onto the first factor, and the comultiplication map $CR \rightarrow C^2R$ is given by 
    \[ (a,b) \mapsto \left( (a,b),(b,b^p) \right). \]
    It can then be checked either by direct computation or using the fact that the biring structure is the one restricted from $\Barks(C_p)$ that these are both $A(C_p)$-algebra maps.
\end{proof}

\begin{proposition} \label{C4notplethory}
The Burnside--Tambara ring $P(C_4)$ does not admit the structure of an $A(C_4)$-plethory compatibly with the composition of bispans. 
\end{proposition}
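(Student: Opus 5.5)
The approach is to argue by contradiction using \cref{compatibility_with_composition_in_barks} together with \cref{character_map_rational_iso} (note $C_4$ is abelian, hence Dedekind). Suppose $P(C_4)$ carried an $A(C_4)$-plethory structure whose plethysm is composition of bispans. Then every $f \in P(C_4)$ would have a coaddition $\Delta^+(f)$ and a comultiplication $\Delta^\times(f)$ in $P(C_4) \otimes_{A(C_4)} P(C_4)$ satisfying the standard relations of \cite[Section 1.3]{BorgerWieland2005}: $f \circ (g+h) = \Delta^+(f)(g,h)$ and $f \circ (gh) = \Delta^\times(f)(g,h)$ for all $g,h$.

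First I will show that these cooperations are forced to be the ones coming from $\Barks(C_4)$. Since $\chi$ is a rational isomorphism, $\Q \otimes \Barks(C_4) \cong \Q \otimes P(C_4)$. Both $\circ_P$ and $\circ_B$ are ring maps in the first variable, and by \cref{compatibility_with_composition_in_barks} they agree under $\chi$. So after rationalization the cooperations on $P(C_4)$ must agree with those of the plethory $\Barks(C_4)$ from \cref{cooperations in barks}. To make this rigorous I would evaluate on universal (generic) elements: the identity $\Delta(f)(g,h) = f \circ (g \ast h)$ holds for all pairs, and $\Q \otimes \Barks \otimes \Barks$ is detected by such evaluations. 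Hence $(\chi \otimes \chi)(\Delta_P(f)) = \Delta_B(\chi(f))$. In other words, a compatible plethory structure exists only if, for every $f$, the element $\Delta_B(\chi f)$ lies in the image of $\chi \otimes \chi \colon P(C_4) \otimes_{A(C_4)} P(C_4) \to \Barks(C_4) \otimes_{\Marks(C_4)} \Barks(C_4)$.

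Next I exhibit an element violating this, starting with $s = [C_4/e \to C_4/C_2]$, for which $\chi(s) = (2x_e^2, 2x_e, 0)$ by \cref{ex:c4character}. By \cref{composition_product_for_C4_example}, $\chi(s) \circ \bar u = (2u_1^2, 2u_1, 0)$. Applying this to $\bar u = ab$ gives
\[
\Delta^\times_B(\chi s) = (2x_e^2 \otimes x_e^2,\ 2x_e \otimes x_e,\ 0).
\]
The additive analogue, with cross term $(4x_e \otimes x_e, 0, 0)$, is hit by $vx \otimes x$, so I expect the multiplicative one to be the obstruction. If it is not, I will also try $\Delta^+$ and $\Delta^\times$ of $m$ and $n$.

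The main obstacle is showing that this element is not in the integral image of $\chi \otimes \chi$. I would use the bigrading: $\chi$ is graded, so it suffices to work in bidegree $(2,2)$. Using the presentation in \cref{big_burnside_ring_C4_example_computation}, I would list $A(C_4)$-module generators of $P(C_4)_2$: $x^2$, $m$, $s$, together with $A(C_4)$-multiples such as $ux^2$, $um$, $vx^2$. I would then write down the images of all tensors of two such generators, one coordinate $[e],[C_2],[C_4]$ at a time. The expected obstruction is $2$-adic: any combination whose $[C_4]$-coordinate vanishes and whose $[C_2]$-coordinate is a multiple of $x_e \otimes x_e$ must come from terms like $s \otimes s$ or $u$- and $v$-multiples. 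Those terms force the $[C_2]$-coefficient of $x_e \otimes x_e$ to be divisible by $4$, or else they force a mismatch in the $[e]$-coordinate, where the required value is only $2x_e^2 \otimes x_e^2$. This contradicts the required coefficient $2$ and proves the proposition.
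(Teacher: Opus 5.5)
Your proposal is correct and is essentially the paper's proof: the same element $s$, the same use of \cref{compatibility_with_composition_in_barks}, and the same evaluation argument forcing the $C_2$-coordinate of $(\chi\otimes\chi)(\Delta^\times s)$ to be $2x_e\otimes x_e$. To make your ``generic elements'' step rigorous you need evaluation at Zariski-dense constant tuples in $\chi(A^+(C_4))$ together with torsion-freeness of $\Barks\otimes_{\Marks}\Barks$; the rational isomorphism alone does not do it. The $2$-adic obstruction is also the same as the paper's. The paper detects it with a ring map to $\Z[y,z]$ killing $x_{C_2}$, whose image $\Z[y^2,z^2,2y,2z]$ does not contain $2yz$. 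Your bidegree-$(2,2)$ check works equally well and is simpler than you anticipated. Among the degree-$2$ basis elements $x^2, m, ux^2, s, vx^2$, only $\chi(s)$ has an $x_e$-term in its $C_2$-coordinate, with coefficient $2$. Hence the $x_e\otimes x_e$-coefficient there is divisible by $4$ on the entire image, and you never need to examine the $[e]$- or $[C_4]$-coordinates.
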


\begin{proof}
Let $s = [C_4/e \rightarrow C_4/C_2] \in P(C_4)$ whose character is given by 
\[ \chi(s) = (2x_e^2,2x_e,0) \in \Barks(C_4). \]
We show that there is no element $\Delta^\times s \in P(C_4) \otimes_{A(C_4)} P(C_4)$ satisfying the equation 
\begin{equation}
    \label{comult_identity} (\Delta^\times s)(a,b) = s \circ (a \cdot b)
\end{equation}
for all $a, b \in P^+(C_4)$, and hence there is no comultiplication on $P(C_4)$ that is compatible with the composition product.

Assume there is such an element $\Delta^\times s$. Since the character map $\chi$ respects the ring operations as well as the composition product by \cref{compatibility_with_composition_in_barks}, we can apply it to both sides of \cref{comult_identity} to get the equation
\begin{equation}
    \label{chi_comult_identity} ((\chi\otimes \chi)(\Delta^\times s))(\bar{u},\bar{v}) = \chi(s) \circ (\bar{u} \cdot \bar{v})
\end{equation}
for all $\bar{u}, \bar{v} \in\chi(P^+(C_4))$. We exploit this to compute the $C_2$-coordinate $((\chi \otimes \chi)(\Delta^\times s))_{C_2}$ and arrive at a contradiction by showing that no such element can be in the image of $(\chi\otimes\chi)$.
Computing the right-hand side of \cref{chi_comult_identity}, using \cref{composition_product_for_C4_example}, we get
\begin{align*}
    \chi(s) \circ (\bar{u} \cdot \bar{v})
    &= (2x_e^2, 2x_e, 0) \circ (u_1v_1,u_2v_2,u_3v_3) \\
    &= (2u_1^2v_1^2, 2u_1v_1, 0).
\end{align*}
Now for the left-hand side, note that since $\Barks(C_4) = \Z[x_e] \times \Z[x_e, x_{C_2}] \times \Z[x_e, x_{C_2}, x_{C_4}]$, the $C_2$-factor of $\Barks(C_4)\otimes_{\Marks(C_4)}\Barks(C_4)$ is $\Z[x_e, x_{C_2}] \otimes \Z[x_e, x_{C_2}]$. The $C_2$-coordinate of the comultiplication $((\chi\otimes \chi)(\Delta^\times s))_{C_2}$ can therefore be written in the form
\begin{equation*}
    \sum_{i,j,k,l} h_{i,j,k,l} \left(x_e^{i} x_{C_2}^{j} \otimes x_e^{k} x_{C_2}^{l}\right),
\end{equation*}
with $h_{i,j,k,l}\in \Z$ for all $i,j,k,l \in \N$.
Evaluating $(\chi\otimes\chi)(\Delta^\times s)$ at $(\bar{u},\bar{v})$ with $\bar{u} = (u_1,u_2,u_3)$ and $\bar{v} = (v_1,v_2,v_3)$ then yields an element whose $C_2$-coordinate is
\begin{equation*}
    \sum_{i,j,k,l} h_{i,j,k,l} \left(u_1^{i} u_{2}^{j} v_1^{k} v_2^{l}\right).
\end{equation*}
Equating this with the $C_2$-coordinate of the right-hand side, we get
\begin{equation}
    \label{zero_polynomial_on_lattice}
    2u_1v_1 - \sum_{i,j,k,l} h_{i,j,k,l} \left(u_1^{i} u_{2}^{j} v_1^{k} v_2^{l}\right) = 0    
\end{equation}
for all $(u_1, u_2, u_3), (v_1, v_2, v_3) \in \chi(P^+(C_4))\subseteq \Barks(C_4)$. In particular, we note that since 
\begin{align*}
    \chi(u) = (4,0,0), \ \chi(v) = (2,2,0), \text{ and } \chi(1) = (1,1,1),
\end{align*}
for $u,v,1 \in A(C_4) \subseteq P(C_4)$, the image of the effective Burnside--Tambara semiring under $\chi$ contains all tuples of the form $(4l_1, 4l_2, 4l_3)$ where $l_1\geq l_2\geq l_3 \in \mathbb{Z}$. So \cref{zero_polynomial_on_lattice} holds for all choices of $u_1,u_2,v_1,v_2\in 4\N$ with $u_1 \geq u_2$ and $v_1 \geq v_2$, i.e. the polynomial 
\begin{equation*}
    2ac - \sum_{i,j,k,l} h_{i,j,k,l} \left(a^ib^jc^kd^l\right) \in \Z[a,b,c,d]
\end{equation*}
evaluates to $0$ on all such $u_1,u_2,v_1,v_2$ and hence must be the zero polynomial. Therefore, the $C_2$-coordinate of $(\chi\otimes\chi)(\Delta^\times s)$ is
\begin{equation*}
    2x_e \otimes x_e \in \Z[x_e, x_{C_2}] \otimes \Z[x_e, x_{C_2}].
\end{equation*}
Let
    $$\pi_2 \colon \Barks(C_4)\otimes_{\Marks(C_4)} \Barks(C_4) \to \Z[x_e, x_{C_2}] \otimes \Z[x_e, x_{C_2}]$$
be the projection onto the $C_2$-coordinate and let
    $$f\colon \Z[x_e, x_{C_2}] \otimes \Z[x_e, x_{C_2}] \to \Z[y, z]$$
be the map given by
\begin{align*}
    f(x_e \otimes 1) &= y \\
    f(1 \otimes x_e) &= z \\
    f(x_{C_2} \otimes 1) &= 0 \\
    f(1 \otimes x_{C_2}) &= 0.
\end{align*}
Then we have, from the above computation, that $(f\circ \pi_2 \circ (\chi \otimes \chi))(\Delta^\times s) = 2yz$. However, this is a contradiction because $2yz$ is not in the image of $f \circ \pi_2 \circ (\chi \otimes \chi)$: we know that $u,v,x,m,n,s$ generate $P(C_4)$ as a ring (\cref{big_burnside_ring_C4_example_computation}), which gives us a collection of $10$ generators for $P(C_4) \otimes_{A(C_4)} P(C_4)$ (since $u, v\in A(C_4)$, $1\otimes u = u \otimes 1$ and $1\otimes v = v \otimes 1$). Applying $f \circ \pi_2 \circ (\chi \otimes \chi)$ to these $10$ generators, the only non-constant elements we get are $y^2, z^2, 2y, 2z$. Therefore, the image of this map is the subring $\Z[y^2, z^2, 2y, 2z] \subset \Z[y, z]$, and this subring does not contain $2yz$.
\end{proof}

However, we do have a positive result for the rationalization of $P(G)$.
\begin{proposition}
    If $G$ is Dedekind, then $\Q \otimes P(G)$ admits the structure of a $(\Q\otimes A(G))$-plethory compatibly with the composition of bispans.
\end{proposition}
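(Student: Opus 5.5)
The plan is to transport the plethory structure of $\Barks(G)$ along the rational isomorphism $\chi\otimes\Q$.

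\textbf{Step 1: rationalize the target.} By \cref{character_map_rational_iso}, for $G$ Dedekind the map $\chi_\Q \colon \Q\otimes P(G)\to \Q\otimes\Barks(G)$ is a ring isomorphism. Restricted to $\Q\otimes A(G)$, it is the classical rational isomorphism $\Q\otimes A(G)\cong \Q\otimes\Marks(G)$ of \cref{classicalmarks}. Moreover $\Q\otimes\Barks(G)=\Barks(\Sub(G),\Q)$. By \cref{relative_barks}, the arguments of \cref{cooperations in barks} and \cref{plethysm_in_barks} go through over $\Q$. Hence $\Barks(\Sub(G),\Q)$ is a $\Marks(\Sub(G),\Q)$-plethory, whose plethysm is the $\Q$-linear extension of the one on $\Barks(G)$.

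\textbf{Step 2: transport the structure.} Plethory structures are transported along compatible isomorphisms of base ring and algebra. So the pair $(\chi_\Q|_{\Q\otimes A(G)},\chi_\Q)$ identifies $\Q\otimes P(G)$ as a $\Q\otimes A(G)$-algebra with $\Barks(\Sub(G),\Q)$ as a $\Marks(\Sub(G),\Q)$-algebra. Concretely, the functor $\Alg_{\Q\otimes A(G)}(\Q\otimes P(G),-)$ is identified with $\Alg_{\Marks(\Sub(G),\Q)}(\Barks(\Sub(G),\Q),-)$ via restriction of scalars along the base isomorphism. The comonad structure is then pulled back.

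We must also check that tensor products match: $\chi_\Q\otimes\chi_\Q$ is an isomorphism $(\Q\otimes P(G))^{\otimes_{\Q\otimes A(G)}2}\cong \Barks_\Q^{\otimes_{\Marks_\Q}2}$. This holds because the base map is an isomorphism. The birings and the $\odot$-product transport in the same way.

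\textbf{Step 3: compatibility with composition.} The transported plethysm on $\Q\otimes P(G)$ is $a\circ b=\chi_\Q^{-1}(\chi_\Q(a)\circ_B\chi_\Q(b))$. By \cref{compatibility_with_composition_in_barks}, on $P^+(G)$ this equals $\chi^{-1}\chi(a\circ_P b)=a\circ_P b$.

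The transported plethysm is additive and $\Q$-linear in the first variable. By \cref{algebra_map_in_first_coord}, the bispan composition extended to $P(G)$ is additive in the first variable as well, so the two agree on $P(G)\times P^+(G)$.

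The main obstacle is the second variable. Bispan composition on $P(G)$ is defined only on $P^+(G)$ in the second slot, not on differences. What we can claim is that the transported plethysm is the unique plethysm extending $\circ_P$: plethysm in the second variable is governed by the biring cooperations, and $\chi(P^+(G))$ is Zariski dense, as in the argument of \cref{C4notplethory}. So any plethysm compatible with $\circ_P$ on effective elements must agree with the transported one.

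I would phrase ``compatibly with the composition of bispans'' as exactly this agreement on $P(G)\times P^+(G)$. With that reading, Steps 1–3 constitute the proof.
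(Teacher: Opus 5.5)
Your proposal is correct and is essentially the paper's proof: the paper likewise transports the $\Q\otimes\Marks(G)$-plethory structure on $\Q\otimes\Barks(G)$ from \cref{relative_barks} along the rational isomorphisms of \cref{classicalmarks} and \cref{character_map_rational_iso}, and uses \cref{compatibility_with_composition_in_barks} for compatibility with bispan composition. Your uniqueness remark via Zariski density goes beyond what the statement requires and is only sketched, but Steps 1--3 prove the statement without it.
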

\begin{proof}
    By \cref{classicalmarks,character_map_rational_iso,compatibility_with_composition_in_barks}, this is equivalent to the statement that $\Q\otimes\Barks(G)$ is a $(\Q\otimes \Marks(G))$-plethory, which we noted in \cref{relative_barks}.
\end{proof}

So although we cannot define cooperations $\Delta^+, \Delta^\times \colon P(G) \to P(G)\otimes_{A(G)} P(G)$ generally, we do have the restricted rational cooperations
$$\Delta^+, \Delta^\times \colon P(G) \to \Q\otimes \left(P(G) \otimes_{A(G)} P(G)\right)$$
which can be used to compute composites just the same. For example, for $s\in P(C_4)$ as in \cref{C4notplethory}, we have $\Delta^\times(s) = \frac{1}{2} s\otimes s$, and so for any $a, b\in P(C_4)$, we have
$$s\circ (ab) = \left(\frac{1}{2} s\otimes s\right)(a,b) = \frac{1}{2}(s\circ a)(s\circ b).$$
This is an element of $P(C_4)$ because $(s\circ a)(s\circ b)$ is always divisible by $2$.

\bibliographystyle{alpha}
\bibliography{references.bib}

\end{document}